\newtheorem{thm}{Theorem}
\newtheorem{prop}[thm]{Propositoin}
\newtheorem{lemma}[thm]{Lemma}
\newtheorem{cor}[thm]{Corollary}
\theoremstyle{definition}
\newtheorem{defn}[thm]{Definition}
\newtheorem{exam}{Example}[section]
\def\supp{\mathop{\rm {supp}}\nolimits}
\def\supp{\hbox{\rm supp}}
\def\diag{\hbox{\rm diag}}
\def\O{{\mathcal O}}
\def\Aut{\hbox{\rm Aut}}
\def\K{{\mathcal K}}
\def\F{{\mathcal F}}
\def\L{{\mathcal L}}
\def\C{{\mathcal C}}
\def\comp{{\mathbb {C}}}
\def\cst{{C${}^*$}}
\begin{document}
\title{Ideals of the core of \cst -algebras
associated with self-similar maps}
\date{}
\author{Tsuyoshi Kajiwara}
\address[Tsuyoshi Kajiwara]{Department of Environmental and
Mathematical Sciences,
Okayama University, Tsushima, Okayama, 700-8530,  Japan}

\author{Yasuo Watatani}
\address[Yasuo Watatani]{Department of Mathematical Sciences,
Kyushu University, Motooka, Fukuoka, 819-0395, Japan}
\maketitle
  \hyphenation{cor-re-spond-ences}
\begin{abstract}
We give a complete classification of the ideals of the core
of the \cst -algebras associated with self-similar maps under a 
certain condition. 
Any ideal is completely 
determined by the intersection with the coefficient algebra 
$C(K)$ of the self-similar set $K$. The 
corresponding closed subset of $K$ is described by the singularity structure 
of the self-similar map. 
In particular the core is simple 
if and only if the self-similar map has no branch point. 
A matrix representation of the core 
is essentially used to prove the classification.  
\end{abstract}

\section
{Introduction}

A self-similar map on a compact metric
space $K$ is a family of proper contractions
$\gamma = (\gamma_1,\dots,\gamma_N)$ on $K$ such that $K =
\bigcup_{i=1}^N\gamma_i(K)$.  In our former work
Kajiwara-Watatani \cite{KW3}, we introduced \cst-algebras
associated with self-similar maps on compact metric spaces as
Cuntz-Pimsner algebras using certain \cst-correspondences
and showed that the associated \cst
-algebras are simple and purely infinite. 
A related study 
on \cst-algebras associated with iterated function systems 
is  done by Castro \cite{Ca}.  
A generalization to 
Mauldin-Williams graphs is given by Ionescu-Watatani \cite{IW}.

The fixed point subalgebra of the gauge action of the 
\cst -algebras is called the core. 
In this paper we  give a complete classification of the ideals of the core
of the \cst -algebras associated with self-similar maps by the  
singularity structure of the self-similar maps. In particular 
the core is simple if and only if 
the  self-similar map has no branch point. 
A matrix representation of the core 
is essentially used to prove the classification. 
We represent the core by certain degenerate subalgebras 
of the matrix valued functions. These subalgebras are described  by a 
family of equations in terms of branch points, branch values and branch 
index. One of the key points is the analysis of  the core of the 
Cuntz-Pimsner algebra by Pimsner \cite{Pi}. The core is the 
inductive limit of the subalgebras which are globally 
represented on the $n$-times tensor product of the original Hilbert module. 
We examine certain examples concretely including a tent map.

In \cite{KW1}, the authors classified traces of the core of the \cst
-algebras associated with self-similar map. 
We needed a lemma on the extension of traces on a subalgebra and
an ideal to their sum following after Exel and Laca \cite{EL}. 
We could do complete analysis of point measures using the
Lemma.  We  also applied  the Rieffel
correspondence of traces between Morita equivalent \cst -algebras.

In this paper, we also need the Rieffel
correspondence of ideals between Morita equivalent \cst -algebras 
to examine the ideals of the core. Let $B$ a \cst-algebra and 
$A$ a subalgebra of $B$ and $L$ an ideal of $B$. 
In general, it is difficult 
to describe the ideals $I$ of $A + L$ in terms of $A$ and $L$ 
independently.  
We use a matrix representation over $C(K)$ of the core 
and its description by the singularity structure of branch points 
to overcome this difficulty. Here the finiteness of the branch values and 
continuity of any element of $\F^{(n)} \subset C(K, M_{N^n})$ are crucially 
used to analyze the ideal structure. 
We shall show that any ideal $I$ of the core 
is completely determined by the closed subset of the self-similar set 
which corresponds to the  ideal $C(K) \cap I$. 
We list all closed subsets of $K$ which appear in this way 
explicitly to complete the classification of ideals of the core.

The content of the paper is as follows:  In section 2,
we present some notations for self-similar maps and basic results for
\cst -correspondences associated with self-similar maps.
In section 3, we give a matrix representation of the core.
Firstly we describe the 
compact algebras of \cst -correspondences associated with self-similar
maps by certain subalgebras 
of the matrix valued functions. These subalgebras are determined by a 
family of equations in terms of branch points, branch values and branch 
index. Secondly we describe their sums also by matrix representations 
globally.  
In section 4, we give a complete classification of the ideals of the core. 
We list all primitive ideals. 
we need to construct the traces on the core 
to prove the  classification. 
We use a method which is different with 
the way we did in \cite{KW3}.  
We also show that the GNS representations of discrete
extreme traces generate type I${}_n$ factors. In fact we  compute  
the quotient of the core by the primitive ideals which correspond to the
extreme discrete traces.

\section
{Self-similar maps and \cst -correspondences}
\label{sec:Self}
Let $(\Omega,d)$ be a (separable) complete  metric space.  A map 
$f:\Omega \rightarrow \Omega$ is called
a proper contraction if there exists a constant $c$ and $c'$ with
 $0<c' \leq c <1$ such that
$0<  c'd(x,y) \leq d(f(x),f(y)) \le cd(x,y)$ for any  $x$, $y \in \Omega$.

We consider 
a family $\gamma = (\gamma_1,\dots,\gamma_N)$ of $N$ proper contractions 
on $\Omega$. We always assume that $N \geq 2$. Then there exists a unique non-empty compact set  
$K \subset \Omega$ which is self-similar in the sense that 
$K = \bigcup_{i=1}^N \gamma_i(K)$. See 
Falconer \cite{F} and Kigami \cite {Kig} for more on 
fractal sets. 

In this note we usually forget an ambient space $\Omega$ as in \cite{KW1} 
and start 
with the following: Let $(K,d)$ be a compact metric set and  
 $\gamma = (\gamma_1,\dots,\gamma_N)$ be a family of $N$ proper contractions 
on $K$. We say that 
$\gamma$ is a self-similar map on $K$ if
$K = \bigcup_{i=1}^N \gamma_i(K)$. Throughout the paper we assume that 
$\gamma$ is a self-similar map on $K$.

\begin{defn} 
We say that $\gamma$ satisfies the open set condition if there exists an
open subset $V$ of $K$ such that $\gamma_j(V) \cap \gamma_k(V)=\phi$ for
$j \ne k$ and  $\bigcup_{i=1}^N \gamma_i(V) \subset V$. Then  
$V$ is an open dense subset of $K$. See a book \cite{F} by Falconer, for 
example. 
\end{defn}

Let $\Sigma  = \{\,1,\dots,N\}$.  For $k \ge 1$, we put $\Sigma^k = \{\,1,\dots,N\}^k$.

For a self-similar map $\gamma$ on a compact metric space $K$, we introduce
the following subsets of $K$:
\begin{align*}
B_{\gamma} = &\{\,b \in K\,|\, b =\gamma_{i}(a)=\gamma_{j}(a), \,\,
\text{for some}\,\, a \in K\, \text{and } \, i \ne j,\,\, \}, \\
C_{\gamma} = &\{\,a\in K\,|\,\gamma_{i}(a)=\gamma_{j}(a), \,\,
\text{for some}\,\, a \in K\, \text{and } \, i \ne j,\,\, \} 
= \{\,a\in K\,|\,\gamma_j(a) \in B_{\gamma}\,\,\text{for
some}\,\,j\,\}  \\
P_{\gamma} = & \{\,a\in K\,|\, \exists k \ge 1,\,\exists (j_1,\dots,j_k)
\in \Sigma^k\,\,
  \text{such that}\,\, \gamma_{j_1}\circ \cdots \circ \gamma_{j_k}(a) \in
B_{\gamma}\}, \\
O_{b,k}  = &\{\,\gamma_{j_1}\circ \cdots \circ \gamma_{j_k}(b)\,|\,
            (j_1,\dots,j_k) \in \Sigma^{k}\,\}\quad (k \ge 0),\quad
O_b    =  \bigcup_{k=0}^{\infty}O_{b,k}, \text{where} \,  O_{b,0} =\{b\}, \\
Orb = & \bigcup_{b \in B_{\gamma}} O_b.
\end{align*}

We call $B_{\gamma}$ the branch set of $\gamma$, $C_{\gamma}$ the branch
value set of $\gamma$ and $P_{\gamma}$ the postcritical set of $\gamma$.
We call $O_{b,k}$ the set of $k$-th $\gamma$ orbits of $b$, and $O_b$
the set of $\gamma$ orbits of $b$.

In general we define branch index at $(\gamma_j(y),y)$ by 
$e_{\gamma}(\gamma_j(y),y) = \verb!#!\{i \in
\Sigma |
\gamma_j(y)=\gamma_i(y)\}$.

Throughout the paper, 
we assume that a self-similar map $\gamma$ on $K$ satisfies the
following Assumption B.

Assumption B:
\begin{enumerate}
  \item There exists a continuous map $h$ from $K$ to $K$ which
        satisfies $h(\gamma_j(y))=y$ $(y\in K)$ for each $j$.
  \item The set $B_{\gamma}$ is a finite set.
  \item $B_{\gamma} \cap P_{\gamma} = \emptyset$. 
  
\end{enumerate}

If (2) is replaced by a stronger condition

(2)' The set $B_{\gamma}$ and $P_{\gamma}$ is a finite set,

then it is exactly the assumption A in \cite{KW3}. If 
we assume the condition A, then 
$\gamma$ satisfies the open set condition automatically as in \cite{KW3}.  

Many important examples satisfy the assumption B above.
If we assume that $\gamma$ satisfies the assumption B, then we see that 
$K$ does not have any isolated points and $K$ is not countable. 

Since $B_{\gamma}$ is finite, $C_{\gamma}$ is also finite.
We put $B_{\gamma}=\{b_1,\dots,b_r\}$, $C_{\gamma}=
\{c_1,\dots,c_s\}$.
We note that $c \in C_{\gamma}$ means that there exist
$1 \le j \ne j' \le N$ such that $\gamma_j(c)=\gamma_{j'}(c)$.
If we put $b=\gamma_j(c)=\gamma_{j'}(c)$, then $b \in B_{\gamma}$.
Therefore $B_{\gamma}$ is the set of $b \in K$ such that $h$ is not 
locally homeomorphism at $b$, that is, $B_{\gamma}$ is the set of 
the branch points of $h$ in the usual sense.

For fixed $b \in B_{\gamma}$, we denote by $e_{b}$ the number of $j$
such that $b =\gamma_{j}(h(b))$. Put $c = h(b)$. 
Then $e_b$ is exactly the branch 
index at $(b,h(b)) = (\gamma_j(c),c)$  and $e_b = 
e_{\gamma}(\gamma_j(c),c)$. Therefore $b$ is a branch point if 
and only if $e_b \geq 2$. 

We label these indices $j$ so that 
$$
\{ j \in \Sigma\ |\ b =\gamma_{j}(h(b))\} = 
 \{j(b,1),j(b,2),\dots, j(b,e_b)\}
$$
satisfying $j(b,1)< j(b,2)<
\cdots <j(b,e_{b})$. We shall use these data as an 
expression of the singularity of self-similar maps to analyze 
the core. 

\begin{exam} \label{ex:tent} {\rm (tent map) }
Let $K=[0,1]$, $\gamma_1(y) = (1/2)y$ and $\gamma_2(y)
= 1-(1/2)y$.
  Then a family $\gamma = (\gamma_1,\gamma_2)$ of proper contractions
is a self-similar map.
We note that $B_{\gamma}=\{\,1/2\,\}$, $C_{\gamma} = \{\,1\,\}$ and
$P_{\gamma} = \{\,0,1\,\}$.
A continuous map $h$ defined by
\[
  h(x) = \begin{cases}
          2x & \quad  0 \le x \le 1/2  \\
          -2x + 2 & \quad 1/2 \le x \le 1
         \end{cases}
\]
satisfies Assumption B (1). The map $h$ is the ordinary tent map
on $[0,1]$, and
$(\gamma_1,\gamma_2)$ is the inverse branches of the tent map $h$.
We note that $B_{\gamma}=\{\,1/2\,\}$, $C_{\gamma} = \{\,1\,\}$ and
$P_{\gamma} = \{\,0,1\,\}$. We see that $h(1/2) = 1, h(1) = 0, h(0) = 0$.
Hence a self-similar map $\gamma = (\gamma_1,\gamma_2)$ satisfies
the assumption B above.
\end{exam}

\begin{exam}\cite{KW1} {\rm (Sierpinski gasket)}
Let $P=(1/2, \sqrt{\,3}/2)$, $Q=(0,0)$, $R=(1,0)$,
$S=(1/4,\sqrt{\,3}/4)$, $T=(1/2,0)$ and $U=(3/4,\sqrt{\,3}/4)$.
Let $\tilde{\gamma}_1$, $\tilde{\gamma}_2$ and $\tilde{\gamma}_3$ be
contractions on the regular triangle $T$ on ${\bf R}^2$ with three
vertices $P$, $Q$ and $R$ such that
\[
\tilde{\gamma}_1(x,y) =
\left(\frac{x}{2}+\frac{1}{4},\frac{1}{2}y\right), \quad
\tilde{\gamma}_2(x,y)=\left(\frac{x}{2},\frac{y}{2}\right), \quad
\tilde{\gamma}_3(x,y) = \left(\frac{x}{2}+\frac{1}{2},\frac{y}{2}\right).
\]
We denote by $\alpha_{\theta}$ a rotation by angle $\theta$.
We put $\gamma_1 = \tilde{\gamma}_1$, $\gamma_2 = \alpha_{-2\pi/3}\circ
\tilde{\gamma}_2$, $\gamma_3 = \alpha_{2\pi/3}\circ \tilde{\gamma}_3$.
Then  $\gamma_1(\Delta PQR) = \Delta PSU$,
$\gamma_2(\Delta PQR) = \Delta TSQ$ and
$\gamma_3(\Delta PQR) = \Delta TRU$,
where $\Delta ABC$ denotes the regular triangle whose vertices
are A, B and C.
Put $K = \bigcap_{n=1}^{\infty}\bigcap_{(j_1,\dots,j_n)\in \Sigma^n}
(\gamma_{j_1}\circ \cdots \circ \gamma_{j_n})(T)$.
Then $(K,\gamma)$ is self similar satisfying assumption B, and $K$ is
the Sierpinski gasket.
$B_{\gamma}=\{\,S,T,U\,\}$,
$C_{\gamma} = P_{\gamma}=\{\,P,Q,R\,\}$ and $h$ is given
  by
\[
  h(x,y) =
\begin{cases}
     &  \gamma_{1}^{-1}(x,y)  \quad (x,y) \in \Delta PSU \cap K\\
     &  \gamma_{2}^{-1}(x,y)  \quad (x,y) \in \Delta TSQ \cap K \\
     &  \gamma_{3}^{-1}(x,y)  \quad (x,y) \in \Delta TRU \cap K,
\end{cases}
\]
\end{exam}

As in \cite{KW1},
we shall construct a \cst-correspondence (or Hilbert \cst-bimodule) 
for the self-similar map
$\gamma=(\gamma_1,\dots,\gamma_N)$.
Let $A={\rm C}(K)$, and $\C_{\gamma} =\{\,(\gamma_j(y),y)\,|\,
j\in \Sigma,\, y \in K\,\}$.
We put $X_{\gamma}={\rm C}(\C_{\gamma})$.
We define left and right $A$-module actions and an $A$-valued inner
product on $X_{\gamma}$ as follows:
\begin{align*}
  & (a\cdot f\cdot b)(\gamma_j(y),y)=
  a(\gamma_j(y))f(\gamma_j(y),y)b(y) \quad  y \in K, \quad j=1,\dots,N \\
  & (f|g)_A (y) = \sum_{j=1}^N
  \overline{f(\gamma_j(y),y)}g(\gamma_j(y),y),
\end{align*}
where $f$, $g \in X_{\gamma}$ and $a$, $b \in A$.
We denote by $\K(X_{\gamma})$ the set of "compact" operators on
$X_{\gamma}$, and by $\L(X_{\gamma})$ the set of adjointable operators
on $X_{\gamma}$ and by $\phi$ the $*$-homomorphism from $A$ to
$\L(X_{\gamma})$ given by $\phi(a)f=a\cdot f$.
Recall that the "compact operators" $\K(X_{\gamma})$ 
is the \cst-algebra generated by the rank one operators 
$\{\theta_{x,y} \ | \ x,y \in X_{\gamma} \}$, 
where $\theta_{x,y}(z) = x(y|z)_A$ for $z \in X$. 
We put $J_X=\phi^{-1}(\K(X_{\gamma}))$. Then $J_X$ is an ideal of $A$.

\begin{lemma} (Kajiwara-Watatani \cite{KW1})
Let $\gamma=(\gamma_1,\dots,\gamma_N)$ be a self-similar map 
on a compact set $K$. Then 
$X_{\gamma}$ is an $A$-$A$ correspondence and  full as a 
right Hilbert module. 
Moreover $J_X$ 
remembers the branch set $B_{\gamma}$ so that 
$J_X
= \{\,f \in A\,|\,f(b)=0 \quad \text{for each }b \in B_{\gamma}\,\}$

\end{lemma}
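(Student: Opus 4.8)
Paragraph 1. The module assertions I would dispatch by direct verification. The form $(f|g)_A$ lands in $C(K)$ since each $y\mapsto f(\gamma_j(y),y)$ is continuous; positivity, conjugate-symmetry and right $A$-linearity are immediate, and $(f|f)_A=0$ forces $f\equiv0$ on $\C_{\gamma}$. As $\C_{\gamma}=\bigcup_j\{(\gamma_j(y),y):y\in K\}$ is a finite union of graphs of continuous maps it is compact, and $\|f\|_\infty^2\le\|(f|f)_A\|_\infty\le N\|f\|_\infty^2$; equivalence of the module norm with the sup norm yields completeness of $X_{\gamma}=C(\C_{\gamma})$. A one-line computation gives $(\phi(a)f|g)_A=(f|\phi(\bar a)g)_A$, so $\phi(a)^*=\phi(\bar a)$ and $\phi$ is a $*$-homomorphism into $\L(X_{\gamma})$; hence $X_{\gamma}$ is an $A$-$A$ correspondence. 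Fullness is immediate from $(1|1)_A\equiv N$, an invertible element of $A$, so the ideal generated by the inner products is all of $A$.

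Paragraph 2. For $J_X$ I would build the fibrewise matrix model that the paper relies on. Evaluation $f\mapsto\hat f(y)=(f(\gamma_1(y),y),\dots,f(\gamma_N(y),y))$ maps $X_{\gamma}$ into $W_y=\{v\in\comp^N:v_j=v_{j'}\text{ whenever }\gamma_j(y)=\gamma_{j'}(y)\}$, with $(f|g)_A(y)=\langle\hat f(y),\hat g(y)\rangle_{\comp^N}$. In the fibre $\theta_{\xi,\eta}$ acts as $|\hat\xi(y)\rangle\langle\hat\eta(y)|$, and one checks that $\iota_y\colon\theta_{\xi,\eta}\mapsto|\hat\xi(y)\rangle\langle\hat\eta(y)|$ is multiplicative, hence extends to a $*$-homomorphism $\iota_y\colon\K(X_{\gamma})\to M_N$ with $\|\iota_y(T)\|\le\|T\|$. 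The left action is diagonal: once $\phi(a)\in\K(X_{\gamma})$ is known, $\iota_y(\phi(a))=P_y\diag(a(\gamma_1(y)),\dots,a(\gamma_N(y)))P_y$, where $P_y$ is the projection of $\comp^N$ onto $W_y$.

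Paragraph 3. The crux will be the necessity $J_X\subseteq\{f:f|_{B_{\gamma}}=0\}$, and my device is the scalar $\tau_T(y):=\tr_{M_N}\iota_y(T)$. On rank ones $\tau_{\theta_{\xi,\eta}}(y)=(\eta|\xi)_A(y)\in C(K)$, and since $|\tau_T(y)|\le N\|T\|$ uniformly in $y$, the function $y\mapsto\tau_T(y)$ is continuous for every $T\in\K(X_{\gamma})$. Suppose $\phi(a)\in\K(X_{\gamma})$ and fix a branch point $b$ with index $e_b\ge2$ and branch value $c=h(b)$. As $\K(X_{\gamma})$ is an ideal of $\L(X_{\gamma})$, $\phi(ga)=\phi(g)\phi(a)$ is compact for every $g\in A$, so $\tau_{\phi(ga)}$ is continuous; reading off the fibre action it equals $\sum_{j}(ga)(\gamma_j(y))$ where the $\gamma_j(y)$ are distinct, but at $c$ the $e_b$ coordinates colliding to $b$ contribute only $(ga)(b)$ because $\dim W_c$ drops, so $\tau_{\phi(ga)}$ jumps at $c$ by $\sum_{b':h(b')=c}(e_{b'}-1)(ga)(b')$. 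Using finiteness of $B_{\gamma}$ to choose $g$ with $g(b)\neq0$ vanishing at the other branch points, continuity forces $(e_b-1)g(b)a(b)=0$, i.e. $a(b)=0$. This is the step I expect to be hardest: recognising that compactness is detected by a fibrewise trace which stays continuous only because the branch index collapses the fibre dimension, and twisting by $\phi(g)$ to separate branch points sharing a branch value.

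Paragraph 4. For the reverse inclusion I would exhibit $\phi(a)$ in $\K(X_{\gamma})$ when $a|_{B_{\gamma}}=0$. With $m_y(x)=\#\{j:\gamma_j(y)=x\}$, define on the fibre product $\{((x,y),(x',y)):(x,y),(x',y)\in\C_{\gamma}\}$ the kernel $\kappa$ equal to $a(x)/m_y(x)$ on the diagonal $x=x'$ and $0$ off it; a direct check gives $T_\kappa f(x,y)=\sum_j\kappa((x,y),(\gamma_j(y),y))f(\gamma_j(y),y)=a(x)f(x,y)=\phi(a)f(x,y)$. The only thing to verify is continuity of $\kappa$: off-diagonal points can accumulate on the diagonal only at a collision $\gamma_j(y),\gamma_{j'}(y)\to b\in B_{\gamma}$, where both the diagonal value $a(\gamma_j(y))\to a(b)$ and the prescribed value $a(b)/e_b$ vanish precisely because $a(b)=0$, with finiteness of $B_{\gamma}$ keeping the local analysis clean. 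Then $\kappa$ is a uniform limit of sums $\sum_k\xi_k(x,y)\overline{\eta_k(x',y)}$ by Stone--Weierstrass on this compact set, and $\kappa\mapsto T_\kappa$ is norm continuous, so $\phi(a)=T_\kappa\in\K(X_{\gamma})$. Together the two inclusions give $J_X=\{f\in A:f(b)=0\text{ for every }b\in B_{\gamma}\}$.
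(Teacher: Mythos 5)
Your proposal is correct, but it is worth noting that the paper itself does not prove this lemma at all: it is quoted from \cite{KW1}, and the closest in-paper material is the Section 3 matrix representation, where $X_{\gamma}\simeq Z_{\gamma}\subset Y_{\gamma}=A^N$ and $\K(X_{\gamma})$ is identified with the algebra $D^{\gamma}$ cut out by row/column identification equations at branch values; from that identification the formula for $J_X$ drops out, since $\phi(a)$ becomes the diagonal matrix $\diag(a(\gamma_1(y)),\dots,a(\gamma_N(y)))$, and membership in $D^{\gamma}$ forces the diagonal entry $a(b)$ to equal the off-diagonal entry $0$ at each $b\in B_{\gamma}$. Your fibre spaces $W_y$ are exactly the paper's fibres $Z_{\gamma}(c)$ with the basis of Lemma \ref{lem:basis}, so the framework is parallel, but your two key devices genuinely differ. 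For necessity, the paper's route is the algebraic identification $\K=D^{\gamma}$ (rank-one operators $[f_i\overline{g}_j]$ visibly satisfy the identifications), whereas you detect compactness by continuity of the fibrewise normalized trace $\tau_T(y)=\tr\,\iota_y(T)$ and exploit the dimension drop of $W_c$ at a branch value; this is a slick localization of the obstruction, though note it silently uses Assumption B --- finiteness of $B_{\gamma}$ (to choose the separating $g$) and the non-isolation of points of $C_{\gamma}$ (to pass from density of $K\setminus C_{\gamma}$ to vanishing of the jump) --- hypotheses that are standing in the paper but absent from the bare statement of the lemma. For sufficiency, the paper's Lemma \ref{lem:finite-rank} subtracts off the fibres at $C_{\gamma}$ and uses the Jordan decomposition of the matrix entries to write what remains as an explicit \emph{finite} sum $\sum\theta_{(S^p_{ij})^{1/2}\delta_i,(S^p_{ij})^{1/2}\delta_j}$; your kernel $a(x)/m_y(x)$ on the fibred product plus Stone--Weierstrass is arguably cleaner, but it only exhibits $\phi(a)$ as a norm \emph{limit} of finite-rank operators. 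The paper's construction buys the strictly stronger conclusion $\K_0(X_{\gamma})=\K(X_{\gamma})$, which is stated in Lemma \ref{lem:finite-rank} and exploited later, so your argument proves the lemma as stated but would not substitute for the paper's finite-rank exactness.
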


We denote by $\O_{\gamma}$ the Cuntz-Pimsner \cst -algebra (\cite{Pi})
associated with the \cst-correspondence $X_{\gamma}$ and 
call it the Cuntz-Pimsner algebra $\O_{\gamma}$ 
associated with a self-similar map ${\gamma}$.
Recall that the Cuntz-Pimsner algebra ${\mathcal O}_{\gamma}$ is 
the universal \cst-algebra generated by $i(a)$ with $a \in A$ and 
$S_{\xi}$ with $\xi \in X_{\gamma}$  satisfying that 
$i(a)S_{\xi} = S_{\phi (a)\xi}$, $S_{\xi}i(a) = S_{\xi a}$, 
$S_{\xi}^*S_{\eta} = i((\xi | \eta)_A)$ for $a \in A$, 
$\xi, \eta \in X_{\gamma}$ and 
$i(a) = (i_K \circ \phi)(a)$ for $a \in J_X$, 
where $i_K : K(X_{\gamma}) \rightarrow {\mathcal O}_{\gamma}$ 
is the homomorphism 
defined by $i_K(\theta _{\xi,\eta}) = S_{\xi}S_{\eta}^*$.
We usually identify $i(a)$ with $a$ in $A$.  
We also identify $S_{\xi}$ with $\xi \in X$ and simply write $\xi$
instead of $S_{\xi}$.  
There exists an action 
$\beta : {\mathbb R} \rightarrow \Aut \ {\mathcal O}_{\gamma}$
defined by $\beta_t(S_{\xi}) = e^{it}S_{\xi}$ for $\xi\in X_{\gamma}$ 
and $\beta_t(a)=a$ 
for $a\in A$, which is called the {\it gauge action}.

\begin{thm} \cite{KW1}
Let  $\gamma$  be a self-similar map  on a compact metric space $K$.  
If $(K,\gamma)$ satisfies the  open set condition, then 
the associated Cuntz-Pimsner algebra
$\O_{\gamma}$ is simple and purely infinite.  
\end{thm}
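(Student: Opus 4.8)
The statement is a simplicity-and-pure-infiniteness result for a Cuntz--Pimsner algebra, so the plan is to reduce it to the general criteria for $\O_\gamma = \O_{X_\gamma}$ to be simple and purely infinite (in the form developed by Pimsner \cite{Pi} and refined by Katsura and by Kishimoto--Kumjian) and then to verify the two dynamical hypotheses these criteria demand, namely minimality and aperiodicity of the correspondence $X_\gamma$. Two preliminary observations make the general theory applicable. First, the left action $\phi$ is injective: since $K=\bigcup_{j=1}^N\gamma_j(K)$, every point of $K$ occurs as a first coordinate $\gamma_j(y)$, so $(\phi(a)f)(\gamma_j(y),y)=a(\gamma_j(y))f(\gamma_j(y),y)=0$ for all $f$ forces $a=0$. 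Second, $X_\gamma$ is full by the preceding Lemma. With $\ker\phi=0$ the covariance ideal is exactly $J_X=\phi^{-1}(\K(X_\gamma))=\{f\in A : f(b)=0,\ b\in B_\gamma\}$, and the gauge-invariant uniqueness theorem for $\O_\gamma$ is available.

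Next I would prove minimality. Every ideal of $A=C(K)$ has the form $I_F=\{a : a|_F=0\}$ for a closed $F\subseteq K$, and the positive and negative invariance conditions for $I_F$ under the correspondence translate into invariance of $F$ under the branch maps, namely $\gamma_j(F)\subseteq F$ for all $j$ together with backward invariance $h(F)\subseteq F$. I would then show that the open set condition forces $F=\emptyset$ or $F=K$: if $F$ is nonempty and closed with $\bigcup_j\gamma_j(F)\subseteq F$, then $F$ carries the attractor of the restricted iterated function system, and because the open dense set $V$ supplied by the open set condition meets every $\gamma$-orbit (see \cite{F}), backward invariance under $h$ spreads $F$ over a dense subset of $K$, whence $F=K$. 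This is the step where the open set condition is genuinely used, and it is where I expect to spend the most care, since the branched, non-invertible nature of $h$ means one cannot simply invoke minimality of a single-valued dynamical system; one must instead exploit the density of $\gamma$-orbits together with the absence of isolated points in $K$.

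For aperiodicity I would use that, under Assumption B, $K$ is perfect and uncountable, so the branches $\gamma_j$ exhibit no periodic behaviour on any open set: the set of points whose $h$-itinerary is eventually periodic has empty interior, which yields the topological freeness needed to exclude ideals that are not gauge invariant. Combined with minimality this gives simplicity of $\O_\gamma$ through the gauge-invariant uniqueness theorem. Finally, for pure infiniteness I would exploit that there are $N\ge 2$ proper contractions: the generators $S_{\xi_j}$ arising from a partition of unity subordinate to the pieces $\gamma_j(K)$ behave like Cuntz isometries with mutually orthogonal ranges, so every nonzero projection in the core dominates a properly infinite one, and since $K$ has no isolated points $\O_\gamma$ admits no finite-dimensional or commutative quotient. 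Together with simplicity this shows that every nonzero hereditary subalgebra contains an infinite projection, so $\O_\gamma$ is purely infinite. The principal difficulties are the minimality argument above and the bookkeeping at the branch set $B_\gamma$, where $\phi$ fails to take values in $\K(X_\gamma)$ and the covariance relation must be handled through $J_X$ rather than on all of $A$.
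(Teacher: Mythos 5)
The paper does not prove this theorem at all: it is quoted from \cite{KW1}, where the proof runs through the simplicity criteria for bimodule algebras of the type in \cite{KPW} (minimality plus aperiodicity/freeness of $X_{\gamma}$) followed by a Cuntz-style localization argument for pure infiniteness. Your overall architecture --- injectivity of $\phi$, fullness, minimality and aperiodicity giving simplicity via a uniqueness theorem, then pure infiniteness --- matches that route, and your preliminary observations ($\phi$ injective because $K=\bigcup_j\gamma_j(K)$, identification of $J_X$) are correct. But there are two concrete gaps. The most serious is in the pure infiniteness step: the inference ``simple, with no finite-dimensional or commutative quotient, hence every nonzero hereditary subalgebra contains an infinite projection'' is false as a matter of general theory (irrational rotation algebras and simple unital AF algebras are simple with no such quotients and are stably finite); since $\O_{\gamma}$ is already simple it has no quotients at all, so that clause does no work. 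What is actually needed, and entirely missing from your sketch, is the localization/scaling argument: for a nonzero positive $a$, pass to $E(a)$ in the core via the faithful gauge-expectation, choose a point where the matrix symbol of $E(a)$ is bounded below, and use that $\operatorname{diam}\gamma_{{\bf i}}(K)\to 0$ (the maps are proper contractions) together with the open set condition to build $b$ with $b^*ab$ invertible, controlling the cross terms $S_{{\bf i}}^*\,a\,S_{{\bf j}}$, ${\bf i}\neq{\bf j}$, along the way. Relatedly, your claim that the $S_{\xi_j}$ have ``mutually orthogonal ranges'' is false on the nose when the pieces $\gamma_j(K)$ overlap; the open set condition only yields orthogonality modulo elements supported off the dense open set $V$, which is precisely why the approximation argument is unavoidable.

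The second problem is a misplacement of where the open set condition enters. Minimality does not need it: if $F$ is closed, nonempty, invariant under all $\gamma_j$ and under $h$, then for $a\in F$ the set $C(a)=\bigcup_n\bigcup_{{\bf j}\in\Sigma^n}\gamma_{{\bf j}}(h^n(a))$ lies in $F$ and is dense in $K$ --- this is exactly Lemma \ref{lem:dense} of the present paper, proved from contractivity and self-similarity alone --- so $F=K$. Where the open set condition is genuinely used is the aperiodicity/freeness step, which you assert rather than prove: the condition required is that each coincidence set $\{\,y\in K \mid \gamma_{{\bf i}}(y)=\gamma_{{\bf j}}(y)\,\}$, ${\bf i}\neq{\bf j}$, has empty interior, and for multi-indices of equal length this is delivered precisely by the disjointness of the sets $\gamma_{{\bf i}}(V)$ over the dense open $V$ furnished by the open set condition (for indices of different lengths one additionally uses contractivity and the absence of isolated points). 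Your substitute --- ``the set of points with eventually periodic $h$-itinerary has empty interior,'' attributed to perfectness of $K$ under Assumption B --- is neither proved nor obviously the condition the uniqueness theorem demands, and as stated it makes the open set condition appear dispensable there while being invoked for the one step (minimality) that never needed it.
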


Let $X_{\gamma}^{\otimes n}$ be the 
$n$-times inner tensor product of $X_{\gamma}$ and
$\phi_n$ denotes the left module action of $A$ on $X_{\gamma}^{\otimes n}$.
Put 
$$
\F^{(n)} = A \otimes I + \K(X_{\gamma})\otimes I + 
\K(X_{\gamma}^{\otimes 2})\otimes I + 
 \cdots + \K(X_{\gamma}^{\otimes n}) \subset \L(X_{\gamma}^{\otimes n})
$$
We embed $\F^{(n)}$ into $\F^{(n+1)}$ by $T \mapsto T\otimes I$ for 
$T \in \F^{(n)}$. Put
$\F^{(\infty)} = \overline{\bigcup_{n=0}^{\infty}\F^{(n)}}$.
It is important to recall that Pimsner \cite{Pi} shows that 
we can identify $\F^{(n)}$ with the  
\cst-subalgebra of  $\O_{\gamma}$ generated by $A$ and $S_xS_y^*$ 
for $x,y \in X^{\otimes k}$, $k=1,\dots, n$ 
under identifying $S_xS_y^*$  with 
$\theta_{x,y}$.  
and the inductive limit algebra 
$\F^{(\infty)}$ is isomorphic to 
the fixed point subalgebra $\O_{\gamma}^{\mathbb T}$ of $\O_{\gamma}$
under the gauge action and is called the {\it core} .  
We shall identify the $\O_{\gamma}^{\mathbb T} $ with 
$\F^{(\infty)}$.

\section
{Matrix representation of cores}
\label{sec:Matrix}

If a self-similar map $\gamma=(\gamma_1,\dots,\gamma_N)$ 
has a branched point, then the Hilbert module 
$X_{\gamma}$ is not finitely generated projective module and 
$K(X_{\gamma})\not= L(X_{\gamma})$. But 
if the self-similar map $\gamma$ satisfies Assumption B, 
then $X_{\gamma}$ is near  to a finitely generated projective module 
in the following sense: 
The compact algebra $K(X_{\gamma})$ is equal to 
the set $K_0(X_{\gamma})$ of finite sums of rank one 
operators $\theta _{x,y}$. Moreover $K(X_{\gamma})$ is realized as 
a subalgebra of the full matrix algebra $M_N(A)$ over $A = C(K)$  
consisting of matrix valued functions $f$ on $K$ such that 
their scalar matrices $f(c)$ live in certain  restricted subalgebras 
for each $c$ in the finite set $C_{\gamma}$ and live in 
the full matrix algebra 
$M_N({\mathbb C})$ for other $c \notin C_{\gamma}$. We can 
describe the restricted subalgebras in terms of the singularity 
structure of the self-similar map $\gamma$, i.e., 
branch set, branch value set and branch index. 
Let $Y_{\gamma}:=A^N$ be a free module over $A = C(K)$. 
Then $L(Y_{\gamma})$ 
is isomorphic to $M_N(A)$. Therefore it is natural to 
realize the bi-module $X_{\gamma}$ 
as a submodule $Z_{\gamma}$ of 
$Y_{\gamma}:=A^N$ in terms of the singularity 
structure of the self-similar map $\gamma$.

More precisely,  we shall start with 
defining left and right $A$-module actions and an $A$-inner product 
on $Y_{\gamma}:=A^N$ as follows: 
\begin{align*}
& (a \cdot f \cdot b)_i(y) = a(\gamma_i(y))f_i(y)b(y) \\
& (f|g)_A(y) = \sum_{i=1}^N \overline{f_i(y)}g_i(y),
\end{align*}
where $f=(f_1,\dots,f_N), g=(g_1,\dots,g_N) \in Y_{\gamma}$ and $a$, $b \in A$.  Then
$Y_{\gamma}$ is clearly an $A$-$A$ correspondence and $Y_{\gamma}$
is a finitely generated projective right module over $A$.
We define 
\begin{align*}
& Z_{\gamma}:= \{\,f = (f_1,\dots,f_N) \in A^N\,  |\, \\
& \text{ for any } 
 c \in C_{\gamma}, b \in B_{\gamma} \text { with } h(b) = c, \  
  f_{j(b,k)}(c)=f_{j(b,k')}(c) \quad 1\le k, k' \le
e_{b} \,   \}, 
\end{align*}
that is, the $i$-th component $f_i(c)$ of the 
vector $(f_1(c),\dots,f_N(c)) \in {\mathbb C} ^N$ is equal to the 
$i'$-th component $f_{i'}(c)$ of it for any $i,i'$ in the same index subset 
$$
\{ j \in \Sigma\ |\ b =\gamma_{j}(c)\} = 
 \{j(b,1),j(b,2),\dots, j(b,e_b)\}
$$
for each  $b \in B_{\gamma}$.  

Thus the bimodule $Z_{\gamma}$ is described by the singularity 
structure of the self-similar map $\gamma$ directly.

It is clear that $Z_{\gamma}$ is a closed subspace of $Y_{\gamma}$. 
Moreover $Z_{\gamma}$ is invariant under the left and right actions of $A$. 
In fact for any $f = (f_1,\dots,f_N) \in Z_{\gamma}$ and $a,a' \in A$, 
\begin{align*}
& (afa')_{j(b,k)}(c) = a(\gamma_{j(b,k)}(c))f_{j(b,k)}(c)a'(c)\\
& = a(\gamma_{j(b,k')}(c))f_{j(b,k')}(c)a'(c) 
= (afa')_{j(b,k')}(c)
\end{align*}
for $1\le k, k' \le e_{b}$, 
since $\gamma_{j(b,k)}(c) = \gamma_{j(b,k')}(c)$. 
Therefore $Z_{\gamma}$ is also an $A$-$A$ correspondence
with the $A$-bimodule structure and the $A$-valued inner product 
inherited from $Y_{\gamma}$.

We shall analyze  $Z_{\gamma}$ by studying its fibers. 
We can describe the fibers in terms of branched points.

For $c \in K$, we define the fiber $Z_{\gamma}(c)$ of $Z_{\gamma}$ 
on $c$ by 
\[
Z_{\gamma}(c) = \{f(c) \in {\mathbb C}^N \ | 
\ f \in Z_{\gamma} \subset C(K,{\mathbb C}^N) \}
\]

Let $\mathcal A$ be an subalgebra of 
$\L(Y_{\gamma}) = M_N(A) = C(K,M_n({\mathbb C}))$.  
For $c \in K$, we also study  the fiber ${\mathcal A}(c)$ of $\mathcal A$ 
on $c$ by 
\[
{\mathcal A}(c) = \{T(c) \in M_N({\mathbb C}) \ | 
\ T \in {\mathcal A}  \subset C(K,M_N({\mathbb C})) \}
\]  

In order to get the idea and to simplify the notation, just consider, 
the following local situation for example: 
Assume  that $N = 5, c \in C_{\gamma}$ and 
$h^{-1}(c) = \{b_1, b_2\} \subset B_{\gamma}$, 
\[
b_1 =  \gamma_1(c)= \gamma_2(c), 
\ \ b_2 = \gamma_3(c)=  \gamma_4(c)= \gamma_5(c). 
\]
that is, 
\[
b_1 \; \overset{\gamma_1, \gamma_2 }\Longleftarrow \; 
c \; \overset{\gamma_3, \gamma_4, \gamma_5}\Longrightarrow \; 
b_2
\]

Consider the following  degenerated subalgebra ${\mathcal A}$ 
of a full matrix algebra $M_5({\mathbb C})$: 
\[
{\mathcal A} = \{a = (a_{ij}) \in M_5({\mathbb C}) \ | \ 
a_{1j} = a_{2j},\  a_{i1} = a_{i2}, \ a_{3j} = a_{4j}=a_{5j},\  
a_{i3} = a_{i4} = a_{i5} \}. 
\]
Then 
\[
{\mathcal A} 
= \{ \begin{pmatrix}
  a & a & b & b & b  \\
  a & a & b & b & b  \\
  c & c & d & d & d  \\
  c & c & d & d & d  \\
  c & c & d & d & d  \\
\end{pmatrix} 
\ | \ a,b,c,d \in {\mathbb C} \} 
\]
is isomorphic to $M_2({\mathbb C})$. 
Consider the subspace 
\[
W = \{(x,x,y,y,y)\in {\mathbb C}^5 \ | 
\ x \in {\mathbb C}, y \in {\mathbb C}\}
\]
of ${\mathbb C}^5$. 
Let $u_1 = \frac{1}{\sqrt 2} \ ^t(1,1,0,0,0) \in W$ and  
$u_2 = \frac{1}{\sqrt 3} \ ^t(0,0,1,1,1) \in W$.  
Then $\{u_1,u_2\}$ is a basis of $W$ and 
$\{\theta^{W}_{u_i,u_j}\}_{i,j =1,2}$ is a 
matrix unit of ${\mathcal A}$ and 
\[
{\mathcal A} 
= \{ \sum_{i,j=1}^2 a_{ij} \theta^{W}_{u_i,u_j} 
\ | \  a_{ij} \in {\mathbb C}\}
= L(W). 
\]

Then the argument above shows the following: 

\begin{lemma} \label{lem:basis}
Let  $\gamma$  be a self-similar map  on a compact metric space $K$.
Then for $c \in K$, 
$w_c:= \dim (Z_{\gamma}(c))$ is equal to the cardinality  of $h^{-1}(c)$ 
without counting multiplicity. We can take the following basis 
$\{\,u^c_i\,\}_{i=1,\dots,w_c}$ of 
$Z_{\gamma}(c) \subset {\mathbb C}^N$: 
Rename $h^{-1}(c) = \{b_1, \dots, b_{w_c}\}$. 
Then the $j$-th component of the vector $u^c_i$ is equal to 
$\frac{1}{\sqrt{e_{b_i}}}$ if 
$j \in \{ j \in \Sigma\ |\ b_i =\gamma_{j}(h(b_i))\} = 
 \{j(b_i,1),j(b_i,2),\dots, j(b_i,e_{b_i})\}$ 
and is equal to 0 if $j$ is otherwise. 
\end{lemma}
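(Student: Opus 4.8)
The plan is to prove this by a fiberwise analysis, reducing the general case to the local model computation already carried out in the excerpt. The key observation is that the definition of $Z_{\gamma}$ imposes constraints only at points $c \in C_{\gamma}$, and these constraints decouple across the distinct branch points $b$ lying over $c$. So I would first fix $c \in K$ and examine which linear constraints the defining equations of $Z_{\gamma}$ place on the vector $f(c) \in {\mathbb C}^N$. If $c \notin C_{\gamma}$, then no $b \in B_{\gamma}$ satisfies $h(b) = c$, hence there are no constraints and $Z_{\gamma}(c) = {\mathbb C}^N$; here $h^{-1}(c)$ consists of $N$ distinct preimages $\gamma_1(c), \dots, \gamma_N(c)$ (all distinct precisely because $c \notin C_{\gamma}$), so $w_c = N$, matching the claim.

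For $c \in C_{\gamma}$, I would enumerate $h^{-1}(c) = \{b_1, \dots, b_{w_c}\}$ and note that each index $j \in \Sigma$ lies in exactly one of the sets $S_i := \{\,j \in \Sigma \mid b_i = \gamma_j(c)\,\} = \{j(b_i,1), \dots, j(b_i, e_{b_i})\}$, since $\gamma_j(c)$ is a single well-defined point lying in $h^{-1}(c)$ for each $j$. Thus $\Sigma$ is partitioned as $\Sigma = \bigsqcup_{i=1}^{w_c} S_i$. The defining condition of $Z_{\gamma}$ says precisely that $f_j(c)$ is constant on each block $S_i$, so a vector $f(c)$ lies in $Z_{\gamma}(c)$ if and only if it is constant on each $S_i$. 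This is exactly the subspace $W$ in the local model (with the blocks $\{1,2\}$ and $\{3,4,5\}$ there being the two sets $S_i$). The dimension of this space of block-constant vectors equals the number of blocks, namely $w_c$, giving $\dim Z_{\gamma}(c) = w_c = \#\,h^{-1}(c)$.

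To identify the explicit basis, I would observe that the normalized block-indicator vectors $u^c_i$, whose $j$-th entry is $\frac{1}{\sqrt{e_{b_i}}}$ for $j \in S_i$ and $0$ otherwise, are supported on disjoint blocks, hence orthogonal and in particular linearly independent; each is block-constant and so lies in $Z_{\gamma}(c)$; and there are exactly $w_c$ of them, so they span. The only point requiring a little care is the \emph{surjectivity} direction, i.e. that every block-constant vector $v \in {\mathbb C}^N$ is actually realized as $f(c)$ for some global $f \in Z_{\gamma}$ (so that $Z_{\gamma}(c)$ really is all of $W$ and not merely contained in it). For this I would invoke a partition-of-unity / Tietze argument on the compact metric space $K$: since $C_{\gamma}$ is finite (as $B_{\gamma}$ is finite, by Assumption B), one can choose, for each target block-value, a continuous function on $K$ realizing the prescribed values at $c$ and respecting the constraints at the other finitely many points of $C_{\gamma}$, then assemble the components $f_j$ accordingly.

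The main obstacle I anticipate is exactly this surjectivity step: verifying that the fiber $Z_{\gamma}(c)$ equals the full space of block-constant vectors rather than a proper subspace, which requires building global continuous sections with prescribed local behavior at all the finitely many singular points simultaneously. Once the finiteness of $C_{\gamma}$ and the disjointness of the constraint sets at distinct points of $C_{\gamma}$ are in hand, this is a routine interpolation, but it is the place where the global topology of $K$ and Assumption B genuinely enter, whereas everything else is linear algebra that the local model already settles.
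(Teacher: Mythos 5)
Your proof is correct and takes essentially the same route as the paper, which deduces the lemma from the same local-model computation: the defining constraints of $Z_{\gamma}$ partition $\Sigma$ into the blocks $S_i$ lying over the distinct preimages $b_i \in h^{-1}(c)$, the fiber is the space of block-constant vectors, and the normalized block indicators $u^c_i$ form a basis. Your explicit verification of surjectivity of the evaluation map $Z_{\gamma} \to Z_{\gamma}(c)$ --- extending a prescribed block-constant vector to a global section via a bump function vanishing at the other finitely many points of $C_{\gamma}$ --- makes precise a step the paper leaves implicit (it constructs exactly such functions $f^c$ only later, in the proof of Lemma \ref{lem:finite-rank}).
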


We shall show that $X_{\gamma}$ and $Z_{\gamma}$ are isomorphic as
correspondences.  

\begin{lemma} \label{lem:module}
Let  $\gamma$  be a self-similar map  on a compact metric space $K$.
Then the  \cst-correspondences $X_{\gamma}$ and $Z_{\gamma}$ are
isomorphic.  
\end{lemma}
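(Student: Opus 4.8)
The plan is to construct an explicit isomorphism $\Phi : X_{\gamma} \to Z_{\gamma}$ of $A$-$A$ correspondences by reparametrizing the domain. Recall that $X_{\gamma} = C(\C_{\gamma})$ consists of continuous functions $f$ on the graph $\C_{\gamma} = \{(\gamma_j(y),y) \mid j \in \Sigma, y \in K\}$, while $Z_{\gamma} \subset A^N$ consists of $N$-tuples. The natural map sends $f \in X_{\gamma}$ to the tuple $\Phi(f) = (g_1,\dots,g_N)$ defined by $g_i(y) = f(\gamma_i(y), y)$ for each $i \in \Sigma$ and $y \in K$. First I would check that each $g_i$ is a well-defined continuous function on $K$: this is immediate since $y \mapsto (\gamma_i(y), y)$ is a continuous map into $\C_{\gamma}$ and $f$ is continuous.

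Next I would verify that $\Phi(f)$ actually lands in $Z_{\gamma}$, i.e. that the defining constraints hold. Fix $b \in B_{\gamma}$ with $h(b) = c$, and take indices $j(b,k), j(b,k')$ in $\{j \in \Sigma \mid b = \gamma_j(c)\}$. Then $\gamma_{j(b,k)}(c) = \gamma_{j(b,k')}(c) = b$, so the points $(\gamma_{j(b,k)}(c), c)$ and $(\gamma_{j(b,k')}(c), c)$ coincide in $\C_{\gamma}$; hence $g_{j(b,k)}(c) = f(b,c) = g_{j(b,k')}(c)$, which is exactly the fiber constraint defining $Z_{\gamma}$. I would then check that $\Phi$ intertwines the two structures by direct comparison of the formulas: the left/right actions match because $(a \cdot f \cdot b)(\gamma_i(y),y) = a(\gamma_i(y))f(\gamma_i(y),y)b(y)$ corresponds termwise to the $Y_{\gamma}$-action $(a\cdot g\cdot b)_i(y) = a(\gamma_i(y))g_i(y)b(y)$, and the inner products agree since $(f|g)_A(y) = \sum_j \overline{f(\gamma_j(y),y)} g(\gamma_j(y),y)$ becomes $\sum_i \overline{g_i(y)}\,\tilde g_i(y)$.

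It remains to prove that $\Phi$ is bijective, and this is where I expect the main work to lie. Injectivity is easy: if $\Phi(f) = 0$ then $f(\gamma_i(y),y) = 0$ for all $i,y$, and since every point of $\C_{\gamma}$ has the form $(\gamma_i(y),y)$, we get $f = 0$. The surjectivity is the genuine obstacle. Given $(g_1,\dots,g_N) \in Z_{\gamma}$, I want to define $f(\gamma_i(y),y) := g_i(y)$ and show this is a well-defined continuous function on $\C_{\gamma}$. Well-definedness requires that whenever $(\gamma_i(y),y) = (\gamma_{i'}(y'),y')$ as points of $\C_{\gamma}$, the prescribed values $g_i(y)$ and $g_{i'}(y')$ agree. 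Since the second coordinate forces $y = y'$, this reduces to the case $\gamma_i(y) = \gamma_{i'}(y)$ with $i \ne i'$, i.e. $\gamma_i(y) \in B_{\gamma}$ and $y \in C_{\gamma}$; and precisely here the $Z_{\gamma}$ fiber constraints guarantee $g_i(y) = g_{i'}(y)$. This shows $f$ is well-defined as a function.

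The delicate point is continuity of $f$ on $\C_{\gamma}$: the parametrization $(i,y) \mapsto (\gamma_i(y),y)$ may identify points coming from different branches, so I would argue that $\C_{\gamma}$ carries the quotient topology making it continuous, and then use that $Z_{\gamma}$ is defined by exactly the matching conditions at branch points to show the descended function is continuous. Concretely, I would cover $\C_{\gamma}$ by the closed pieces $\C_i = \{(\gamma_i(y),y) \mid y \in K\}$, each homeomorphic to $K$ via the second coordinate, note $f|_{\C_i}$ is continuous because it equals $g_i$ transported by this homeomorphism, and then invoke the pasting lemma together with agreement of the $g_i$ on overlaps (guaranteed by the fiber conditions) to conclude $f$ is continuous on the finite union $\C_{\gamma} = \bigcup_i \C_i$. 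Finiteness of $B_{\gamma}$ under Assumption B keeps the overlap set controlled. Once continuity is established, $\Phi(f) = (g_1,\dots,g_N)$ recovers the original tuple, giving surjectivity and hence the desired isomorphism.
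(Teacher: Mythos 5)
Your proposal is correct and takes essentially the same route as the paper: the paper defines the same map $(\varphi(\xi))(y)=(\xi(\gamma_1(y),y),\dots,\xi(\gamma_N(y),y))$ and the same inverse $(\psi(f))(\gamma_j(y),y)=f_j(y)$, with well-definedness of $\psi$ resting on exactly the fiber constraints defining $Z_{\gamma}$, and then checks that inner products and actions are preserved. The only difference is that you make explicit the continuity of the descended function $\psi(f)$ via the pasting lemma on the finitely many closed pieces $\C_i=\{(\gamma_i(y),y)\mid y\in K\}$ (each homeomorphic to $K$), a point the paper's proof leaves implicit; note that finiteness of $B_{\gamma}$ is not actually needed there, only that the cover by the $\C_i$ is finite and the $g_i$ agree on overlaps.
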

\begin{proof}Recall that 
$A={\rm C}(K)$, $\C_{\gamma} =\{\,(\gamma_j(y),y)\,|\,
j\in \Sigma,\, y \in K\,\}$ and 
$X_{\gamma}={\rm C}(\C_{\gamma})$. 
We define $\varphi: X_{\gamma} \rightarrow Z_{\gamma}$ by 
\[
  (\varphi (\xi))(y) = (\xi(\gamma_1(y),y),\dots,\xi(\gamma_N(y),y)).
\]
for  $\xi \in X_{\gamma}={\rm C}(\C_{\gamma})$. 
Since $\xi$ is continuous, $\varphi(\xi)$ is continuous because of the
continuity of $\gamma_i$'s. It is easy to check that 
$\varphi (\xi)$ is contained in $Z_{\gamma}$.

Conversely we define $\varphi: Z_{\gamma} \rightarrow X_{\gamma}$ by 
\[
  (\psi (f))(\gamma_j(y),y) =
  f_j(y) \quad (j=1,\dots, N, \ y \in K), 
\]
for $f=(f_1,\dots,j_N) \in Z_{\gamma}$. 
Since $f_{j(b,k)}(h(b))=f_{j(b,k')}(h(b))$ for
$b \in B_{\gamma}$ and 
$1\le k, k' \le e_{b}$, 
$\varphi$ is well-defined.  
Since 
\[
(\psi \varphi)(\xi) = \xi, \quad (\varphi \psi)(f) = f,
\]
for $\xi \in X_{\gamma}$, $f \in Z_{\gamma}$, 
and 
\[
  (\varphi(\xi_1)|\varphi(\xi_2))_A
= (\xi_1|\xi_2)_A
\]
for $\xi_i \in X_{\gamma}$, 
the  \cst-correspondences $X_{\gamma}$ and $Z_{\gamma}$ are
isomorphic.  
\end{proof}

We shall identify $X_{\gamma}$ with $Z_{\gamma}$ and 
regard it as a closed subset of $Y_{\gamma} = A^N =C(K,{\mathbb C}^N)$.

For a Hilbert $A$-module $W$ and $x,y \in W$, 
we denote by $\theta_{x,y} = \theta^{W}_{x,y} $ the
\lq\lq rank one"  operator on $W$ such that 
$\theta^{W}_{x,y}(z) = x(y|z)_A$ for $x,y,z \in W$ 
when we do stress the role of $W$. 
We denote by 
$\K_0(W)$ the set of \lq\lq finite rank operators" 
(i,e, finite sum of rank one operators) on $W$, that is, 
\[
\K_0(W) = \{ \sum_{i=1}^n \theta^{W}_{x_i,y_i} \ | 
\ n \in {\mathbb N}, x_i,y_i \in W \}.
\]

We first examine the situation locally and 
study each fiber $Z_{\gamma}(c)$ to get the idea, 
although we need to know the global behavior. 

We shall show that the algebra $K(Z_{\gamma})$ 
is described globally by imposing the local identification 
conditions of the fiber $K(Z_{\gamma}(c))$ 
on each 
branched values $c$ and is represented as a subalgebra of 
$M_N(C(K))= C(K, M_N({\mathbb C}))$. But 
we need careful analysis, because $L(Z_{\gamma})$ is not 
represented as a subalgebra of 
$M_N(C(K))= C(K, M_N({\mathbb C}))$ globally in general.

We shall show that the algebra $K(Z_{\gamma})$ 
is isomorphic to the 
following subalgebra $D^{\gamma}$ of  
$M_N(C(K))= C(K, M_N({\mathbb C}))$:  

\begin{align*}
  D^{\gamma} = & \{\, a = [a_{ij}]_{i,j}\in M_N(A)
= C(K, M_N({\mathbb C})) \,|\,
\text{ for } c \in C_{\gamma}, b \in B_{\gamma} 
\text{ with } h(b)=c,  \\
& a_{j(b,k),i}(c)=a_{j(b,k'),i}(c)
  \quad 1\le k,k'\le e_{b}, 1 \le i \le N \\
&  a_{i,j(b,k)}(c)=a_{i,j(b,k')}(c)
  \quad 1\le k,k'\le e_{b}, 1 \le i \le N
\}, 
\end{align*}

The algebra $D^{\gamma}$ is a closed *subalgebra of 
$M_N(A)=\K(Y_{\gamma})$ and is
described by the identification equations on each fibers 
in terms of the singularity structure of the self-similar map $\gamma$. 
We shall use the fact that each fiber  $D^{\gamma}(c)$ on $c \in K$ 
is isomorphic to the matrix algebra $M_{w_c}({\mathbb C})$ and simple, where 
$w_c = \dim (Z_{\gamma}(c))$.

For each $c \in C_{\gamma}$, we take the  basis 
$\{\,u^c_i\,\}_{i=1,\dots,w_c}$ 
of 
$Z_{\gamma}(c) = \{\,f(c)\,|\,f \in
Z_{\gamma}\,\} \subset \comp^N$ in Lemma \ref{lem:basis}. 

Then the  following Lemma is clear as in the example above. 
\begin{lemma} \label{lem:onerank}
The algebra $D^{\gamma}$ is expressed as
\begin{align*}
  D^{\gamma} = & 
\{\,a = [a_{ij}]_{ij} \in M_N(A)\,|\, \\
& \text{ for any } c \in C_{\gamma} \ 
   a(c) = \sum_{1 \le i,i' \le w_c} \lambda^c_{i,i'}
   \theta^{\comp^n}_{u^c_i,u^c_{i'}}\, \text{ for some } 
scalars \lambda^c_{i,i'} \}. 
\end{align*}
\end{lemma}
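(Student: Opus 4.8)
The plan is to prove the asserted identity fiberwise. Both the definition of $D^{\gamma}$ and the description in the statement impose conditions only at the finitely many points $c \in C_{\gamma}$ and leave $a(c)$ unconstrained for $c \notin C_{\gamma}$, so it suffices to show that for each fixed $c \in C_{\gamma}$ a scalar matrix $M = [M_{pq}] \in M_N(\comp)$ satisfies the row and column identification equations defining $D^{\gamma}$ at $c$ if and only if $M$ lies in the linear span of $\{\theta^{\comp^N}_{u^c_i,u^c_{i'}}\}_{1 \le i,i' \le w_c}$. Since $h(\gamma_j(c)) = c$ for every $j$ by Assumption B(1) and $K = \bigcup_{i=1}^N \gamma_i(K)$, one has $h^{-1}(c) = \{\gamma_1(c),\dots,\gamma_N(c)\} = \{b_1,\dots,b_{w_c}\}$, and the index sets $\Lambda_i := \{j \in \Sigma : \gamma_j(c) = b_i\} = \{j(b_i,1),\dots,j(b_i,e_{b_i})\}$ form a partition of $\Sigma = \{1,\dots,N\}$ into blocks of sizes $e_{b_1},\dots,e_{b_{w_c}}$. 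In these terms the identification equations say exactly that within each block the rows (resp.\ columns) of $M$ coincide.

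First I would verify the inclusion $\supseteq$. By Lemma \ref{lem:basis} the vector $u^c_i$ has $p$-th component $1/\sqrt{e_{b_i}}$ for $p \in \Lambda_i$ and $0$ otherwise; the supports $\Lambda_i$ are disjoint and each $u^c_i$ is a unit vector, so the $u^c_i$ are orthonormal. Writing $\theta^{\comp^N}_{u^c_i,u^c_{i'}}$ as the outer product $u^c_i (u^c_{i'})^*$, its $(p,q)$-entry equals $(e_{b_i} e_{b_{i'}})^{-1/2}$ when $p \in \Lambda_i$ and $q \in \Lambda_{i'}$, and $0$ otherwise. This entry depends on $p,q$ only through the blocks containing them, so any combination $\sum_{i,i'} \lambda_{i,i'}\,\theta^{\comp^N}_{u^c_i,u^c_{i'}}$ automatically satisfies the row and column equations at $c$.

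For the reverse inclusion I would take $M$ satisfying the identification equations at $c$. Fixing a column index $q$, the row equations force $M_{pq}$ to be constant as $p$ runs over any block $\Lambda_i$ with $e_{b_i} \ge 2$, while for singleton blocks this holds vacuously; then fixing the block of the row index and applying the column equations shows that $M_{pq}$ depends only on the blocks of $p$ and $q$, say $M_{pq} = \mu_{i,i'}$ whenever $p \in \Lambda_i$ and $q \in \Lambda_{i'}$. Setting $\lambda^c_{i,i'} := \sqrt{e_{b_i} e_{b_{i'}}}\,\mu_{i,i'}$ and comparing entries with the formula of the previous paragraph yields $M = \sum_{i,i'} \lambda^c_{i,i'}\,\theta^{\comp^N}_{u^c_i,u^c_{i'}}$, completing the equivalence.

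I do not expect a genuine analytic obstacle, since the statement is entirely fiberwise scalar linear algebra and mirrors the explicit $N=5$ computation given above; the only delicate points are bookkeeping ones. These are confirming that $h^{-1}(c) = \{\gamma_j(c)\}_j$ really does induce a partition of $\Sigma$ (so the blocks $\Lambda_i$ are disjoint and exhaust every row and column index), and correctly treating the blocks arising from points $b_i \in h^{-1}(c)$ with $e_{b_i}=1$, for which no identification equation is imposed. The normalization $1/\sqrt{e_{b_i}}$ plays no role in the spanning statement itself, but it is exactly what makes $\{\theta^{\comp^N}_{u^c_i,u^c_{i'}}\}$ a system of matrix units, thereby identifying the fiber $D^{\gamma}(c)$ with $M_{w_c}(\comp)$ as needed in the sequel.
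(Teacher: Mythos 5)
Your proof is correct and is essentially the paper's own argument: the paper offers no written proof, declaring the lemma ``clear as in the example above,'' and your fiberwise computation --- partitioning $\Sigma$ into the blocks $\Lambda_i$ via $h^{-1}(c)=\{\gamma_1(c),\dots,\gamma_N(c)\}$, showing the identification equations mean exactly block-constancy of entries, and matching block-constant matrices with the span of the outer products $\theta^{\comp^N}_{u^c_i,u^c_{i'}}$ --- is precisely the $N=5$ example's computation carried out in general, with the right care taken for singleton blocks $e_{b_i}=1$. Your closing remark that the normalization $1/\sqrt{e_{b_i}}$ is what makes the $\theta^{\comp^N}_{u^c_i,u^c_{i'}}$ a system of matrix units (so $D^{\gamma}(c)\simeq M_{w_c}(\comp)$) also matches how the paper uses the lemma downstream.
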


We need an elementary fact. 

\begin{lemma} 
Let $f = {}^t(f_1,\dots,f_N) \in  Z_{\gamma}$, $g={}^t(g_1,\dots,g_N) \in
Z_{\gamma}$.
Then the  rank one operator $\theta_{f,g}^{Y_{\gamma}} \in L(Y_{\gamma})$ 
is in $D^{\gamma}$ and represented by the operator matrix 
  \[
   \theta_{f,g}^{Y_{\gamma}} = [f_i\overline{g}_j]_{ij} \in M_N(A),
  \]
\end{lemma}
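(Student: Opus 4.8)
The plan is to verify the two assertions separately: first that the stated matrix is indeed the matrix of $\theta_{f,g}^{Y_{\gamma}}$ under the identification $\L(Y_{\gamma}) \cong M_N(A)$, and then that this matrix satisfies the defining identification equations of $D^{\gamma}$. Since a rank one operator is automatically adjointable, and since $Y_{\gamma}$ is free so that $\K(Y_{\gamma}) = \L(Y_{\gamma}) = M_N(A)$, the only substantive content is the matrix formula together with the membership $\theta_{f,g}^{Y_{\gamma}} \in D^{\gamma}$.

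For the matrix formula I would compute the action of $\theta_{f,g}^{Y_{\gamma}}$ on an arbitrary $z = {}^t(z_1,\dots,z_N) \in Y_{\gamma}$ directly from the definition. Using $\theta_{f,g}^{Y_{\gamma}}(z) = f\cdot(g|z)_A$ together with the inner product $(g|z)_A(y) = \sum_{j=1}^N \overline{g_j(y)}z_j(y)$ and the right action $(f\cdot b)_i(y) = f_i(y)b(y)$, one obtains
\[
(\theta_{f,g}^{Y_{\gamma}}(z))_i(y) = f_i(y)\sum_{j=1}^N \overline{g_j(y)}z_j(y) = \sum_{j=1}^N f_i(y)\overline{g_j(y)}\, z_j(y),
\]
which is exactly the $i$-th component of the matrix product $[f_i\overline{g}_j]_{ij}\, z$. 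Hence $\theta_{f,g}^{Y_{\gamma}} = [f_i\overline{g}_j]_{ij}$ in $M_N(A)$. This is a routine index computation; the only point demanding care is keeping track of which slot carries the conjugation, namely that the first matrix index $i$ comes from $f$ and the second index $j$ from $\overline{g}$.

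For membership in $D^{\gamma}$ I would fix $c \in C_{\gamma}$ and $b \in B_{\gamma}$ with $h(b) = c$, set $a_{ij} = f_i\overline{g}_j$, and check the row and column identities defining $D^{\gamma}$. For the row condition, for $1 \le k,k' \le e_b$ and any $i$,
\[
a_{j(b,k),i}(c) = f_{j(b,k)}(c)\,\overline{g_i(c)} = f_{j(b,k')}(c)\,\overline{g_i(c)} = a_{j(b,k'),i}(c),
\]
where the middle equality is precisely the defining relation of $Z_{\gamma}$ applied to $f$. For the column condition, the same relation applied to $g$ gives $g_{j(b,k)}(c) = g_{j(b,k')}(c)$, hence $\overline{g_{j(b,k)}(c)} = \overline{g_{j(b,k')}(c)}$, so that
\[
a_{i,j(b,k)}(c) = f_i(c)\,\overline{g_{j(b,k)}(c)} = f_i(c)\,\overline{g_{j(b,k')}(c)} = a_{i,j(b,k')}(c).
\]
Thus $[f_i\overline{g}_j]_{ij} \in D^{\gamma}$, completing the proof. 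There is no genuine obstacle here: the statement is an elementary verification, and the only thing to be attentive to is the bookkeeping of indices, that the row identifications of $D^{\gamma}$ are supplied by $f \in Z_{\gamma}$ and the column identifications by $g \in Z_{\gamma}$, with the conjugation on $g$ harmless since scalar equalities are preserved under complex conjugation.
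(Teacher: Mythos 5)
Your proposal is correct and follows the same route as the paper's own proof, which simply asserts the matrix formula ``by simple calculation'' and membership in $D^{\gamma}$ ``as in the example above''; you have merely written out the routine index computation and the row/column verifications that the paper leaves implicit. No gaps.
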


\begin{proof} $\theta^{Y_{\gamma}}_{f,g}$
is expressed as matrix $[f_i\overline{g}_j]_{ij}$ by simple calculation.
Since $f$, $g \in Z_{\gamma}$, the matrix is contained in $D^{\gamma}$ 
as in the example above. 
\end{proof}

We denote by $\K_0(Z_{\gamma})$ the set of finite rank operators on
$Z_{\gamma}$, that is, 
$\K_0(Z_{\gamma}) 
: = \{ \sum_{i=1}^n \theta^{Z_{\gamma}}_{x_i,y_i} \in \L(Z_{\gamma}) \ | 
\ n \in {\mathbb N}, x_i,y_i \in Z_{\gamma} \}.
$
The "compact operators" $\K(Z_{\gamma})$ 
is the norm closure of $\K_0(Z_{\gamma})$. We also consider 
the corresponding operators on $Y_{\gamma}$.

\begin{lemma}\label{lem:module_2} Let $\K(Z_{\gamma} \subset Y_{\gamma}) 
\subset \L(Y_{\gamma})$  be the 
norm closure of 
\[
\K_0(Z_{\gamma} \subset Y_{\gamma}) 
: = \{ \sum_{i=1}^n \theta^{Y_{\gamma}}_{x_i,y_i}\in \in \L(Y_{\gamma}) \ | 
\ n \in {\mathbb N}, x_i,y_i \in Z_{\gamma} \}.
\]
For any $T \in \K(Z_{\gamma} \subset Y_{\gamma})$, we have 
$T(Z_{\gamma}) \subset Z_{\gamma}$ and the 
restriction map 
\[
\delta :\K(Z_{\gamma} \subset Y_{\gamma})\ni T 
\rightarrow T|_{Z_{\gamma}} \in \K(Z_{\gamma}) 
\]
is an onto *isomorphism such that 
\[
\delta(\sum_{i=1}^n \theta^{Y_{\gamma}}_{x_i,y_i}) = 
\sum_{i=1}^n \theta^{Z_{\gamma}}_{x_i,y_i}
\]
\end{lemma}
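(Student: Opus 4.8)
The plan is to settle the finite-rank case by a direct computation and then pass to the norm closure. The crucial structural observation is that for $x,y \in Z_{\gamma}$ and \emph{any} $z \in Y_{\gamma}$ one has $\theta^{Y_{\gamma}}_{x,y}(z) = x\,(y|z)_A$, and this lies in $Z_{\gamma}$ because $x \in Z_{\gamma}$ and $Z_{\gamma}$ is invariant under the right action of $A$. Hence every element of $\K_0(Z_{\gamma}\subset Y_{\gamma})$ maps all of $Y_{\gamma}$ into $Z_{\gamma}$, and since $Z_{\gamma}$ is closed in $Y_{\gamma}$ the same holds for every $T \in \K(Z_{\gamma}\subset Y_{\gamma})$; in particular $T(Z_{\gamma})\subset Z_{\gamma}$, which is the first assertion. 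Restricting to $z \in Z_{\gamma}$ and using that the $A$-valued inner product on $Z_{\gamma}$ is the one inherited from $Y_{\gamma}$, I get $\theta^{Y_{\gamma}}_{x,y}(z) = x\,(y|z)_A = \theta^{Z_{\gamma}}_{x,y}(z)$, so the restriction map $\delta$ is well defined on $\K_0(Z_{\gamma}\subset Y_{\gamma})$ and satisfies the stated formula $\delta(\sum_i \theta^{Y_{\gamma}}_{x_i,y_i}) = \sum_i \theta^{Z_{\gamma}}_{x_i,y_i}$.

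Next I would verify that $\delta$ is a $*$-homomorphism on $\K_0(Z_{\gamma}\subset Y_{\gamma})$: it is clearly linear; it preserves adjoints since $(\theta^{Y_{\gamma}}_{x,y})^{*} = \theta^{Y_{\gamma}}_{y,x}$ and $(\theta^{Z_{\gamma}}_{x,y})^{*} = \theta^{Z_{\gamma}}_{y,x}$, restriction commuting with taking adjoints on $Z_{\gamma}$ by the agreement of the inner products; and it is multiplicative because $\theta^{Y_{\gamma}}_{x,y}\theta^{Y_{\gamma}}_{u,v} = \theta^{Y_{\gamma}}_{x(y|u)_A,\,v}$ with $x(y|u)_A \in Z_{\gamma}$, the same identity holding for the $Z_{\gamma}$-operators. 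Moreover $\delta$ is contractive: for $z \in Z_{\gamma}$ the norm of $Tz$ is the same whether computed in $Z_{\gamma}$ or in $Y_{\gamma}$, so $\|\delta(T)\|_{\L(Z_{\gamma})} \le \|T\|_{\L(Y_{\gamma})}$. Hence $\delta$ extends by continuity to a $*$-homomorphism $\delta : \K(Z_{\gamma}\subset Y_{\gamma}) \to \K(Z_{\gamma})$ between the \cst-completions. Its range is closed (being the image of a \cst-algebra) and contains the dense subalgebra $\K_0(Z_{\gamma})$, so $\delta$ is onto.

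It then remains to prove injectivity, which I expect to be the only genuine obstacle, since it requires controlling $T$ on all of $Y_{\gamma}$ from its behaviour on the submodule $Z_{\gamma}$ — and $Z_{\gamma}$ need not be orthogonally complemented, so there is no projection to fall back on. The key is that $\K_0(Z_{\gamma}\subset Y_{\gamma})$ is $*$-closed, whence $\K(Z_{\gamma}\subset Y_{\gamma})$ is a genuine \cst-subalgebra of $\L(Y_{\gamma})$ and, by the first paragraph applied to $T^{*}$, every $T$ in it has \emph{both} $T(Y_{\gamma})\subset Z_{\gamma}$ and $T^{*}(Y_{\gamma})\subset Z_{\gamma}$. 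Suppose $\delta(T)=0$, that is, $T$ vanishes on $Z_{\gamma}$. For any $z \in Y_{\gamma}$ we have $T^{*}z \in Z_{\gamma}$, whence $TT^{*}z = T(T^{*}z) = 0$; thus $TT^{*} = 0$ and $\|T\|^{2} = \|TT^{*}\| = 0$, so $T = 0$. Therefore $\delta$ is an injective and surjective $*$-homomorphism between \cst-algebras, hence an isometric $*$-isomorphism of $\K(Z_{\gamma}\subset Y_{\gamma})$ onto $\K(Z_{\gamma})$, which is exactly the assertion.
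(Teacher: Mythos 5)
Your proof is correct, but it reaches the conclusion by a genuinely different route than the paper. The paper's proof gets isometry of $\delta$ directly at the finite-rank level by invoking the norm formula of Lemma 2.1 in \cite{KPW}: the norm of $\sum_i \theta_{x_i,y_i}$ equals the norm of the matrix $\bigl((y_i|x_j)_A\bigr)_{ij}$, and since $Z_{\gamma}$ carries the inner product inherited from $Y_{\gamma}$, this quantity is the same for the $Y_{\gamma}$-operators and the $Z_{\gamma}$-operators; isometry on $\K_0$, injectivity, and isometric extension to the closures then all come at once. You instead assemble the isomorphism from soft \cst-algebra facts: contractivity of restriction gives the extension, closedness of the range of a $*$-homomorphism plus density of $\K_0(Z_{\gamma})$ gives surjectivity, and for injectivity you make the nice structural observation --- not used in the paper --- that every $T\in\K(Z_{\gamma}\subset Y_{\gamma})$ maps \emph{all} of $Y_{\gamma}$ into $Z_{\gamma}$ (since $x(y|z)_A\in Z_{\gamma}$ for $z\in Y_{\gamma}$, by right $A$-invariance of $Z_{\gamma}$); applying this to $T^{*}$ and using $\delta(T)=0$ yields $TT^{*}=0$, hence $T=0$, and then the abstract fact that an injective $*$-homomorphism is isometric finishes. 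Your argument is self-contained (no appeal to \cite{KPW}) and cleanly isolates why no complementability of $Z_{\gamma}$ in $Y_{\gamma}$ is needed, at the cost of routing through abstract \cst-theory; the paper's argument buys an explicit norm identity on finite-rank operators, which is sharper information at the dense-subalgebra level. One point worth keeping as you wrote it: you define $\delta$ as the restriction map and then \emph{verify} the displayed formula on rank-one sums, which correctly sidesteps any well-definedness issue arising from non-unique finite-rank representations.
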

\begin{proof}
For any $T = \sum_{i=1}^n \theta^{Y_{\gamma}}_{x_i,y_i} \in 
\K_0(Z_{\gamma} \subset Y_{\gamma})$ and $f \in Z_{\gamma}$, we have 
\[
Tf = \sum_{i=1}^n \theta^{Y_{\gamma}}_{x_i,y_i}f 
   = \sum_{i=1}^n x_i(y_i|f)_A \in Z_{\gamma}.
\] 
Moreover 
\[
\|T\| = \| \sum_{i=1}^n \theta^{Y_{\gamma}}_{x_i,y_i}\| 
= \|((y_i|x_j)_A)_{ij} \| = \| \sum_{i=1}^n \theta^{Z_{\gamma}}_{x_i,y_i}\| 
= \| \delta(T) \|.
\]
by Lemma 2.1 in \cite{KPW}. Hence $\delta$ is isometric on 
$\K_0(Z_{\gamma} \subset Y_{\gamma})$. Therefore 
for any $T \in \K(Z_{\gamma} \subset Y_{\gamma})$, we have 
$T(Z_{\gamma}) \subset Z_{\gamma}$
and  $\delta$ is isometric on 
$\K(Z_{\gamma} \subset Y_{\gamma})$.
 Since the calculation rules 
of the rank one operators are same, $\delta$ is an onto *-isomorphism. 
\end{proof}

\begin{lemma}\label{lem:finite-rank}
Let  $\gamma$  be a self-similar map  on a compact metric space $K$ and 
satisfies Assumption B. 
Then $\K_0(X_{\gamma})=\K(X_{\gamma})$, 
$\K_0(Z_{\gamma})=\K(Z_{\gamma})$ and 
$\K_0(Z_{\gamma} \subset Y_{\gamma})= \K(Z_{\gamma} \subset Y_{\gamma})  
= D^{\gamma} \subset M_N(A)$
\end{lemma}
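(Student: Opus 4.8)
The plan is to reduce the entire statement to the single identity $\K_0(Z_{\gamma} \subset Y_{\gamma}) = D^{\gamma}$ and then transport it. Granting this identity, since $D^{\gamma}$ is cut out of $M_N(A)$ by finitely many closed linear conditions it is norm closed, so $\K(Z_{\gamma} \subset Y_{\gamma}) = \overline{\K_0(Z_{\gamma} \subset Y_{\gamma})} = \overline{D^{\gamma}} = D^{\gamma} = \K_0(Z_{\gamma} \subset Y_{\gamma})$; applying the $*$-isomorphism $\delta$ of Lemma \ref{lem:module_2}, which sends $\theta^{Y_{\gamma}}$ to $\theta^{Z_{\gamma}}$ and hence $\K_0(Z_{\gamma} \subset Y_{\gamma})$ onto $\K_0(Z_{\gamma})$, gives $\K_0(Z_{\gamma}) = \K(Z_{\gamma})$; and the correspondence isomorphism $X_{\gamma} \cong Z_{\gamma}$ of Lemma \ref{lem:module} carries this to $\K_0(X_{\gamma}) = \K(X_{\gamma})$. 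So everything comes down to $\K_0(Z_{\gamma} \subset Y_{\gamma}) = D^{\gamma}$, and only the inclusion $D^{\gamma} \subseteq \K_0(Z_{\gamma} \subset Y_{\gamma})$ requires work: the reverse inclusion is immediate from the elementary identity $\theta^{Y_{\gamma}}_{f,g} = [f_i \overline{g}_j]_{ij} \in D^{\gamma}$ for $f,g \in Z_{\gamma}$ together with $D^{\gamma}$ being a closed linear subspace.

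To prove $D^{\gamma} \subseteq \K_0(Z_{\gamma} \subset Y_{\gamma})$ I would split $D^{\gamma}$ along the finite set $C_{\gamma}$. Put $I_{C_{\gamma}} = \{g \in A : g(c) = 0 \text{ for all } c \in C_{\gamma}\}$ and $J = M_N(I_{C_{\gamma}})$. Since the defining constraints of $D^{\gamma}$ are imposed only at the points of $C_{\gamma}$ and are vacuous when the matrix vanishes there, $J$ is exactly $\{a \in D^{\gamma} : a(c) = 0 \text{ for all } c \in C_{\gamma}\}$, a closed two-sided ideal of $D^{\gamma}$ with $D^{\gamma}/J \cong \bigoplus_{c \in C_{\gamma}} D^{\gamma}(c) \cong \bigoplus_{c \in C_{\gamma}} M_{w_c}(\comp)$ finite dimensional. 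First I would show $J \subseteq \K_0(Z_{\gamma} \subset Y_{\gamma})$: any section in $I_{C_{\gamma}}^N$ lies in $Z_{\gamma}$ because its value at each $c \in C_{\gamma}$ is $0$, and writing a matrix-unit entry $E_{ij} g$ with $g \in I_{C_{\gamma}}$ as $\theta^{Y_{\gamma}}_{g_1 e_i, \overline{g_2} e_j}$ for a continuous factorization $g = g_1 g_2$ whose factors again vanish on $C_{\gamma}$ (e.g. $g_1 = g/\sqrt{|g|}$, $g_2 = \sqrt{|g|}$) exhibits $E_{ij} g$, and hence every element of $J = M_N(I_{C_{\gamma}})$, as a finite sum of rank-one operators built from vectors of $Z_{\gamma}$.

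Next I would hit the finite-dimensional quotient. For each $c \in C_{\gamma}$ take the orthonormal fibre basis $\{u^c_i\}_{i=1}^{w_c}$ of $Z_{\gamma}(c)$ from Lemma \ref{lem:basis} and a bump function $\phi_c \in C(K)$ with $\phi_c(c) = 1$ and $\phi_c(c') = 0$ for the remaining $c' \in C_{\gamma}$. The constant-vector section scaled by the bump, $\tilde u^c_i := \phi_c\, u^c_i$, lies in $Z_{\gamma}$ since at each branch value its value is either $u^c_i \in Z_{\gamma}(c)$ or $0$. Then $\theta^{Y_{\gamma}}_{\tilde u^c_i, \tilde u^c_{i'}} \in \K_0(Z_{\gamma} \subset Y_{\gamma})$ takes the value $\theta^{\comp^N}_{u^c_i, u^c_{i'}}$ at $c$ and $0$ at the other branch values, so modulo $J$ it is the matrix unit $\theta^{\comp^N}_{u^c_i, u^c_{i'}}$ in the $c$-summand and $0$ elsewhere. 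As $i,i'$ and $c$ vary, these images run through matrix units spanning every summand $M_{w_c}(\comp)$ (Lemma \ref{lem:onerank}), so $\K_0(Z_{\gamma} \subset Y_{\gamma})$ surjects onto $D^{\gamma}/J$. Combined with $J \subseteq \K_0(Z_{\gamma} \subset Y_{\gamma}) \subseteq D^{\gamma}$ this forces $\K_0(Z_{\gamma} \subset Y_{\gamma}) = D^{\gamma}$.

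The step I expect to be the main obstacle is exactly $D^{\gamma} \subseteq \K_0(Z_{\gamma} \subset Y_{\gamma})$, and the reason deserves emphasis: because $\dim Z_{\gamma}(c) = w_c$ jumps as $c$ crosses $C_{\gamma}$, the fibrewise identity $c \mapsto$ (projection onto $Z_{\gamma}(c)$) is discontinuous, so $D^{\gamma}$ and $\K(Z_{\gamma})$ are nonunital and no single finite Parseval frame can reconstruct an arbitrary element in one stroke. The ideal--quotient decomposition above is what sidesteps this: the finiteness of $C_{\gamma}$ from Assumption B makes $D^{\gamma}/J$ finite dimensional and lets the bump functions localize the fibre matrix units, while $J$ absorbs the remaining unconstrained generic part. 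I would double-check that the factorization of $I_{C_{\gamma}}$ and the bump sections genuinely land in $Z_{\gamma}$, and that the orthonormality of the $u^c_i$ makes the $\theta^{\comp^N}_{u^c_i,u^c_{i'}}$ honest matrix units, but these are the routine points rather than the conceptual one.
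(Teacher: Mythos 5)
Your proposal is correct and takes essentially the same route as the paper: reduce everything to $D^{\gamma} \subseteq \K_0(Z_{\gamma} \subset Y_{\gamma})$, subtract off bump-localized rank-one operators built from the fiber bases $\{u^c_i\}$ to reduce to matrices vanishing on the finite set $C_{\gamma}$, then write each remaining matrix entry as a rank-one operator $\theta^{Y_{\gamma}}_{\cdot,\cdot}$ with vectors in $Z_{\gamma}$. The only cosmetic differences are your ideal--quotient packaging via $J = M_N(I_{C_{\gamma}})$ and your direct complex factorization $g = (g/\sqrt{|g|})\cdot\sqrt{|g|}$, which replaces the paper's Jordan decomposition of each entry into four positive parts followed by taking square roots.
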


\begin{proof}
Since $\K(X_{\gamma})$, $\K(Z_{\gamma})$ and 
$\K(Z_{\gamma} \subset Y_{\gamma})$ are isomorphic and 
corresponding "finite rank operators" are preserved,   
it is enough to show that $D^{\gamma} 
\subset \K_0(Z_{\gamma} \subset Y_{\gamma})$. 
We take $T \in D^{\gamma}$.  By Lemma \ref{lem:onerank}, for 
$c \in C_{\gamma}$,  $T(c)$ has the following form:
\[
  T(c) = \sum_{0 \le i,i' \le w^c}
  \lambda^c_{i,i'}\theta^{\comp^n}_{u^c_i,u^c_{i'}}.
\]
For each $c \in C_{\gamma}$, we take $f^c \in A =C(K)$ such that
  $f^c(c)=1$, $f^c(x)\ge 0$ and supports of $\{f^c\}_{c \in C_{\gamma}}$
  are disjoint each other. 
Define $f^c_{i} \in Z_{\gamma}$ by $f^c_{i}(x)=f^c(x)e^c_i$ for $x \in K$. 
Put
\[
  S = T - \sum_{c \in C_{\gamma}}\sum_{0 \le i,i' \le w^c}
  \lambda^c_{i,i'}\theta^{Y_{\gamma}}_{f^c_i,f^c_{i'}}.
\]
Then  $S(c)=0$ for each $c \in C_{\gamma}$ 
Since $S$ is obtained by subtracting finite rank operators 
in $\K_0(Z_{\gamma} \subset Y_{\gamma})$ from $T$, 
it is sufficient to show that $S$ is in 
$\K_0(Z_{\gamma} \subset Y_{\gamma})$.
We represent $S$ as  $S=[S_{ij}]_{i,j} \in M_N(A)$.
Consider the  Jordan decomposition of $S_{ij} \in A=C(K)$ 
as follows:
\[
S_{ij}=S_{i,j}^1 - S_{i,j}^2 +
\sqrt{\,-1} (S_{i,j}^3 - S_{i,j}^4),
\]
with $S_{i,j}^1, S_{i,j}^2,S_{i,j}^3,S_{i,j}^4  \geq 0$  and 
$S_{i,j}^1S_{i,j}^2 =0$, $S_{i,j}^3S_{i,j}^4 =0$. 
Then $S_{i,j}^p(c)=0$ for $1 \le p \le 4$ and $c \in C_{\gamma}$.
Each element $T \in M_{N^n}(A)$ with $(i,j)$ element $S_{i,j}^p (\ge 0)$
and with other elements $0$ is expressed as 
$\theta_{(S_{i,j}^p)^{1/2}\delta_i,(S_{i,j}^p)^{1/2} \delta_j}$, 
where
$\delta_i$ is an element in $\comp^N$ with $(\delta_i)_j=1$ for $j=i$
  and $(\delta_i)_j=0$ for $j \ne i$.
Since $S_{i,j}^p(c)=0$ for any $c \in C_{\gamma}$, 
$(S_{i,j}^p)^{1/2}
  \delta_i$ and $(S_{i,j}^p)^{1/2} \delta_j$ are in $Z_{\gamma}$. 
Because 
\[
S = \sum_p \sum_{i,j} \theta^{Y_{\gamma}}_{(S_{i,j}^p)^{1/2}\delta_i,
   (S_{i,j}^p)^{1/2} \delta_j}, 
\]
$S$ is in $\K_0(Z_{\gamma} \subset Y_{\gamma})$.
\end{proof}


Next we study the matrix representation of $X_{\gamma}^{\otimes n}$.  
We consider the composition of self-similar maps 
and use the following notation of multi-index:  
For ${\bf i}=(i_1,i_2,\dots,i_n) \in \Sigma^n$, we put
\[
  \gamma_{\bf i} = \gamma_{i_n}\circ \gamma_{i_{n-1}}\circ \cdots \circ
\gamma_{i_1},
\]
and $\gamma^n = \{\gamma_{\bf i}\}_{{\bf i} \in \{1,\dots,N\}^n}$.
Then $\gamma_{\bf i}$ is a proper contraction, and $\gamma^n$ is a
self-similar map on the same compact metric space $K$.

\begin{lemma} \label{lem:branch} 
Let  $\gamma$  be a self-similar map  on a compact metric space $K$ and 
satisfies Assumption B. 
Then $C_{\gamma^n}$ and $B_{\gamma^n}$ are finite sets and 
$C_{\gamma^n} \subset C_{\gamma^{n+1}}$ for each $n= 1,2,3,\dots$. 
The set of branch points $B_{\gamma^n}$ is given by
\[
  B_{\gamma^n} = \{\,\gamma_{\bf j}(b)\,|\, b \in B_{\gamma},\, {\bf j} \in
  \Sigma^k, \, 0 \le k \le n-1 \, \}.
\]
Moreover, if $\gamma_{{\bf i}}(c)=\gamma_{{\bf j}}(c)$ and ${\bf i} \ne
  {\bf j}$, then there exists unique $1 \le s \le n$ such that
$i_s \not= j_s$ and $i_{p}=j_{p}$ for $p \ne s$.
\end{lemma}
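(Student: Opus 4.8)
The plan is to take the branch-point formula for $B_{\gamma^n}$ as the core of the argument, to deduce the finiteness statements and the chain $C_{\gamma^n}\subset C_{\gamma^{n+1}}$ as easy corollaries, and to reserve Assumption B(3) for the uniqueness clause. The single tool driving everything is the relation $h(\gamma_j(y))=y$ from Assumption B(1): each $\gamma_j$ is injective, and $h$ \emph{peels off the outermost contraction}, so that $h^{\,n-m}(\gamma_{\bf i}(a))=\gamma_{(i_1,\dots,i_m)}(a)$ for $0\le m\le n$ and any ${\bf i}\in\Sigma^n$. I would use this repeatedly.

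For the inclusion $\supseteq$ I would fix $b\in B_\gamma$, write $b=\gamma_p(c)=\gamma_q(c)$ with $p\ne q$ and $c=h(b)$, and for ${\bf j}\in\Sigma^k$ with $k\le n-1$ use $K=\bigcup_{{\bf m}\in\Sigma^{n-1-k}}\gamma_{\bf m}(K)$ to factor $c=\gamma_{\bf m}(a)$. Tracking the composition order gives $\gamma_{\bf j}(b)=\gamma_{({\bf m},p,{\bf j})}(a)=\gamma_{({\bf m},q,{\bf j})}(a)$, two multi-indices in $\Sigma^n$ that agree everywhere except in the single slot holding $p$ versus $q$; hence $\gamma_{\bf j}(b)\in B_{\gamma^n}$. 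For $\subseteq$ I would take $b'=\gamma_{\bf i}(a)=\gamma_{\bf j}(a)$ with ${\bf i}\ne{\bf j}$, let $s$ be the largest index with $i_s\ne j_s$, and apply $h$ to $b'$ a total of $n-s$ times and then once more. Reading the outcome through both representations forces $d:=\gamma_{(i_1,\dots,i_s)}(a)=\gamma_{(j_1,\dots,j_s)}(a)$ and $e:=\gamma_{(i_1,\dots,i_{s-1})}(a)=\gamma_{(j_1,\dots,j_{s-1})}(a)$ with $d=\gamma_{i_s}(e)=\gamma_{j_s}(e)$; since $i_s\ne j_s$ this says $d\in B_\gamma$, and $b'=\gamma_{(i_{s+1},\dots,i_n)}(d)$ has the required form with $k=n-s\le n-1$.

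Finiteness of $B_{\gamma^n}$ is then immediate, being a finite union of single points indexed by the finite set $B_\gamma$ and the finitely many ${\bf j}$ of length $\le n-1$. For $C_{\gamma^n}$ I would note that $\gamma_{\bf i}(a)=\gamma_{\bf j}(a)=b'$ gives $a=h^n(b')$, so $C_{\gamma^n}=h^n(B_{\gamma^n})$ is finite as well; and $C_{\gamma^n}\subset C_{\gamma^{n+1}}$ follows by post-composing a witnessing pair with any single $\gamma_p$, i.e. $\gamma_{({\bf i},p)}(a)=\gamma_p(\gamma_{\bf i}(a))=\gamma_p(\gamma_{\bf j}(a))=\gamma_{({\bf j},p)}(a)$.

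For the uniqueness clause I would argue by contradiction: if $\gamma_{\bf i}(c)=\gamma_{\bf j}(c)$ with ${\bf i},{\bf j}$ differing at two positions $t<s$ ($s$ maximal), the peeling above already yields $d=\gamma_{i_s}(e)\in B_\gamma$ together with $e=\gamma_{(i_1,\dots,i_{s-1})}(c)=\gamma_{(j_1,\dots,j_{s-1})}(c)$ coming from two \emph{distinct} multi-indices of length $s-1$. Re-running the same peeling on these shorter indices produces a further branch point $d'=\gamma_{(i_1,\dots,i_t)}(c)\in B_\gamma$ with $e=\gamma_{(i_{t+1},\dots,i_{s-1})}(d')$, whence $d=\gamma_{(i_{t+1},\dots,i_s)}(d')$ is the image of $d'$ under a composition of length $s-t\ge 1$. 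Thus $d'\in B_\gamma$ is carried into $B_\gamma$ by a nonempty word, i.e. $d'\in P_\gamma$, contradicting $B_\gamma\cap P_\gamma=\emptyset$. I expect the only delicate part to be the index bookkeeping in the peeling step---keeping the composition order straight so that the length-$n$ witnesses really differ in exactly the advertised slot, and justifying that $h^{\,n-s}(b')$ may be computed through either representation. The conceptual crux is the final contradiction, which is precisely where Assumption B(3) becomes indispensable.
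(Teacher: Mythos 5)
Your proof is correct and takes essentially the same route as the paper's: peeling off the outermost contractions with $h$ (whose iterates at a point are unambiguous by Assumption B(1)), factoring $c=\gamma_{\bf m}(a)$ through $K=\bigcup_{{\bf m}}\gamma_{\bf m}(K)$ for the inclusion $\supseteq$, and deriving the uniqueness clause from $B_{\gamma}\cap P_{\gamma}=\emptyset$. You merely make explicit what the paper leaves terse — deducing finiteness of $B_{\gamma^n}$ and $C_{\gamma^n}=h^n(B_{\gamma^n})$ from the orbit formula rather than asserting it, and concretely exhibiting the point $d'\in B_{\gamma}\cap P_{\gamma}$ in the uniqueness step — and your choice to append the extra index on the \emph{outermost} side in the proof of $C_{\gamma^n}\subset C_{\gamma^{n+1}}$ is the correct reading of the paper's composition convention (the paper's written tuple $(i,i_1,\dots,i_n)$ places $\gamma_i$ innermost, a slip your version avoids).
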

\begin{proof}
Since $\gamma$ satisfies Assumption B, 
$C_{\gamma^n}$ and $B_{\gamma^n}$ are finite sets. 
Let $c \in C_{\gamma^n}$.  Then $b=\gamma_{\bf
  i}(c)=\gamma_{\bf j}(c)$ with ${\bf i}=(i_1,\dots,i_n)$, ${\bf
  j}=(j_1,\dots,j_n) \in \Sigma^n$ and ${\bf i} \ne {\bf j}$. 
 We put
  $\tilde{\bf i}=(i,i_1,\dots,i_n)$ and $\tilde{j}= (i,j_1,\dots,j_n)$
  for some $1 \le i \le N$.
Then $\gamma_{\tilde{\bf i}}(c)=\gamma_{\tilde{\bf j}}(c)$, $\tilde{\bf
  i}$, $\tilde{\bf j} \in \Sigma^{n+1}$ and $\tilde{\bf i} \ne \tilde{\bf
  j}$.  Hence $c \in C_{\gamma^{n+1}}$.

Let $d = \gamma_{\bf j}(b)$ for some $ b \in B_{\gamma}$ and 
${\bf j} \in \Sigma^k, \, 0 \le k \le n-1$. 
We rewrite it as $d=\gamma_{j_n}\circ \gamma_{j_{n-1}}\circ \cdots
  \circ \gamma_{j_{n-k+1}}(b)$. 
Since $b \in B_{\gamma}$, there exists $c \in C_{\gamma}$ and $j
  \ne j'$ with $b=\gamma_{j}(c)=\gamma_{j'}(c)$.  There exists
  $j_{n-k-1}$,
  $j_{n-k-2}$, $\cdots$, $j_1$ and $a \in K$ with $c=\gamma_{n-k-1}\circ
  j_{n-k-2}\circ \cdots \circ \gamma_{j_1}(a)$.  We put
${\bf j}=(j_{n},j_{n-1},\dots,j_{n-k+1},j,j_{n-k-1},\dots,j_1)$ and
${{\bf j}'}=(j_{n},j_{n-1},\dots,j_{n-k+1},j',j_{n-k-1},\dots,j_1)$.
Thus $d=\gamma_{\bf j}(a)=\gamma_{{\bf j}'}(a)$ and ${\bf j}\ne {\bf
  j}'$.  Hence $d \in B_{\gamma^n}$.

Conversely, let $d \in B_{\gamma^n}$. Then 
$d=\gamma_{\bf j}(a)=\gamma_{{\bf j}'}(a)$ for some $a \in K$, 
${\bf j}$, ${\bf j}'\in \Sigma^n$ with ${\bf j} \ne {\bf j}'$.
Here $a$ is uniquely determined by $d$, because $a = h^n(d)$. 
Similarly we have $\gamma_{j_r}(a)=\gamma_{j'_r}(a) = h^{n-r}(d)$
with $0 \le r \le n-1$.
We write ${\bf j}=(j_{n},\dots,j_1)$, ${\bf j}'=(j_n',\dots,j_1')$. 
We may assume that $j_{n-k}\ne j_{n-k}'$ for some $k$, $(0 \le k \le n-1)$ .
We put 
\[
c=\gamma_{j_{n-k-1}}\circ \cdots \circ \gamma_{j_1}(a)
=\gamma_{j_{n-k-1}'}\circ \cdots \circ \gamma_{j_1'}(a). 
\] 
Then $c=h^{k+1}(d)=c'$.  We put 
$b=\gamma_{j_{n-k}}(c)'=\gamma_{j_{n-k}'}(c)$.  
Then $b=h^{k}(d)$.  It follows that
$b \in B_{\gamma}$ and $d=j_{n}\circ \dots \circ j_{n-k+1}(b)$ with $b \in
  B_{\gamma}$. 

On the contrary, suppose that 
there exist more than one $s$ with $i_s \ne j_s$. 
Then there exists $b \in B_{\gamma} \cap P_{\gamma}$. This 
contradicts to the condition (3) of assumption B. Therefore 
there exists unique $1 \le s \le n$ such that
$i_s \not=  j_s$ and $i_{p}=j_{p}$ for $p \ne s$. 
\end{proof}

We denote by $X_{\gamma^n}$ the $A$-$A$ correspondence for $\gamma^n$.
We need to recall the following fact in \cite{KW1}.  

\begin{lemma}
As $A$-$A$ correspondences, $X_{\gamma}^{\otimes n}$ and $X_{\gamma^n}$ are
isomorphic.
\end{lemma}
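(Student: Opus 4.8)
The plan is to write down one explicit isometric bimodule map $\Phi\colon X_\gamma^{\otimes n}\to X_{\gamma^n}$ on elementary tensors by a path-product formula and then check the five standard properties. Recall $X_\gamma=C(\C_\gamma)$ and $X_{\gamma^n}=C(\C_{\gamma^n})$. For $\xi_1,\dots,\xi_n\in X_\gamma$ and a point $(\gamma_{\mathbf i}(y),y)\in\C_{\gamma^n}$ with $\mathbf i=(i_1,\dots,i_n)$, set $z_0=y$ and $z_k=\gamma_{i_k}(z_{k-1})$, so $z_n=\gamma_{\mathbf i}(y)$, and define
\[
\Phi(\xi_1\otimes\cdots\otimes\xi_n)(\gamma_{\mathbf i}(y),y)=\prod_{k=1}^n \xi_k(z_{n-k+1},z_{n-k}),
\]
so that the $k$-th tensor factor is evaluated on the edge $(z_{n-k+1},z_{n-k})\in\C_\gamma$ of the path from $z_0$ to $z_n$, and extend linearly.

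The first and decisive step is to check that this defines a genuine element of $C(\C_{\gamma^n})$, i.e. that the right-hand side depends only on the point $(\gamma_{\mathbf i}(y),y)\in K\times K$ and not on $\mathbf i$. This is the only place the standing Assumption B enters. If $\gamma_{\mathbf i}(y)=\gamma_{\mathbf j}(y)$ with $\mathbf i\ne\mathbf j$, then Lemma \ref{lem:branch} forces $\mathbf i,\mathbf j$ to differ in exactly one coordinate $s$; together with the injectivity of each proper contraction this forces every intermediate point $z_k$ to agree with the one computed from $\mathbf j$, so the two products coincide. The same injectivity-plus-\ref{lem:branch} argument shows the intermediate points are determined by the endpoints $(z_0,z_n)$, which is exactly what makes $C(\C_{\gamma^n})$ (recording only $(z_0,z_n)$) the correct target. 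Continuity of $\Phi(\xi_1\otimes\cdots\otimes\xi_n)$ then follows because $\Sigma^n\times K\to\C_{\gamma^n}$ is a quotient map and the formula is continuous in $y$ on each of the finitely many sheets.

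The algebraic properties are routine. Balancing over $A$ and intertwining of the two $A$-actions follow directly from the module formulas: moving $a\in A$ across a tensor sign turns $\xi_k\cdot a$ into $a\cdot\xi_{k+1}$, and the inserted scalar evaluated at the shared intermediate point $z_m$ is the same on both sides; the outer left action contributes $a(\gamma_{\mathbf i}(y))$ and the right action contributes $b(y)$, matching the actions on $X_{\gamma^n}$. Inner-product preservation is a telescoping computation: expanding $(\xi_1\otimes\cdots\mid\,\cdot\,)_A$ through the inner tensor product introduces successive sums $\sum_{i_k}$ over edges, which assemble into the single sum $\sum_{\mathbf i\in\Sigma^n}$ defining the inner product of $X_{\gamma^n}$; one does $n=2$ and iterates. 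Hence $\Phi$ descends to the Hausdorff completion, is isometric, and has closed range.

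For surjectivity I would use Stone--Weierstrass. By the endpoint-determines-path fact, each assignment $(\gamma_{\mathbf i}(y),y)\mapsto(z_{n-k+1},z_{n-k})$ is a well-defined continuous map $p_k\colon\C_{\gamma^n}\to\C_\gamma$, and $(p_1,\dots,p_n)\colon\C_{\gamma^n}\to\C_\gamma\times\cdots\times\C_\gamma$ is injective, since two points with the same image have equal endpoints and equal intermediate points. As $\Phi(\xi_1\otimes\cdots\otimes\xi_n)=\prod_k(\xi_k\circ p_k)$, the range of $\Phi$ is the closed linear span of such products; this span is a $*$-subalgebra of $C(\C_{\gamma^n})$ (products and conjugates of the generators are again of the same form, and constants come from $\xi_k\equiv 1$) that separates points by injectivity of $(p_1,\dots,p_n)$, so by Stone--Weierstrass it equals $C(\C_{\gamma^n})=X_{\gamma^n}$. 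I expect the well-definedness across branch points in the first step to be the main obstacle, since it is the step that genuinely requires Lemma \ref{lem:branch} and hence Assumption B; the rest is bookkeeping. One may alternatively run everything by induction on $n$ via $X_\gamma^{\otimes n}=X_\gamma^{\otimes(n-1)}\otimes_A X_\gamma$ together with the factorization $\gamma_{\mathbf i}=\gamma_{i_n}\circ\gamma_{(i_1,\dots,i_{n-1})}$, reducing to the $n=2$ case applied to $\gamma^{n-1}$ and $\gamma$.
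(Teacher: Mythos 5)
Your proposal is correct and takes essentially the same approach as the paper: the paper's proof consists of nothing more than exhibiting the same explicit path-product formula for the isomorphism $X_{\gamma}^{\otimes n} \rightarrow X_{\gamma^n}$, leaving all verifications to the reader. The details you supply (well-definedness at branch points via Lemma \ref{lem:branch} plus injectivity of the $\gamma_j$ --- which one can also obtain at once from Assumption B(1), since every intermediate point is an $h$-iterate of the endpoint $\gamma_{\bf i}(y)$ --- the inner-product telescoping, and surjectivity by Stone--Weierstrass) are all sound fillings of the gaps the paper does not spell out.
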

\begin{proof}
There exists a Hilbert bimodule isomorphism 
$\varphi : X_{\gamma}^{\otimes n} \rightarrow X_{\gamma^n}$
such that 
\begin{align*}
& (\varphi (f_1 \otimes \dots \otimes f_n))
(\gamma _{i_1,\dots,i_n}, y)
= f_1(\gamma _{i_1,\dots,i_n}(y),\gamma _{i_2,\dots,i_n}(y)
    f_2(\gamma _{i_2,\dots,i_n}(y),\gamma _{i_3,\dots,i_n}(y))
\dots f_n(\gamma _{i_n}(y),y)
\end{align*}
for $f_1,\dots, f_n \in X$, $y \in K$ and 
${\bf i} = (i_1,\dots,i_n) \in \Sigma ^n$.  
\end{proof}

For $\gamma^n$, we define a subset $D^{\gamma^n}$ of $M_{N^n}(A)$
as in the case of $\gamma$.  We also consider 
 $C_{\gamma^n}$ instead of $C_{\gamma}$.
We use the same notation $e_b$ for $b \in B_{\gamma^n}$ with $h^n(b)=c$
and $\{\,j(b,k)\,|\,1 \le k \le e_b\,\}$
for $\gamma^n$ as in $\gamma$ if there occur no trouble.  Let 
\begin{align*}
    D^{\gamma^n}
  = \{ \,[a_{ij}]_{ij}\in M_{N^n}(A)  & \,|\,  
\text{ for $c \in C_{\gamma^n}$, $b \in B_{\gamma^n}$ with $h^n(b)=c$, } \\ 
& \text{ $a_{j(b,k),i}(c) = a_{j(b,k'),i}(c), \ \   
 a_{i,j(b,k)}(c) = a_{i, j(b,k')}(c)$  } \\
& \text{ for all $1\le k,k'\le e_b$, $1 \le i \le N^n$ }
  \, \}     
\end{align*}

We note that $D^{\gamma^n}$ is invariant under the pointwise
multiplication of function $f \in A= C(K)$. 

\begin{lemma} $X_{\gamma}^{\otimes n}$ is isomorphic to a closed 
  submodule $Z_{\gamma^n}$ of $A^{N^n}$ as follows:
\begin{align*}
  X_{\gamma}^{\otimes n} \simeq
   Z_{\gamma^n}
=  \{\,(f_1,\dots,f_N) \in A^N\,|\, & \text{for $c \in C_{\gamma^n}$,
$b \in B_{\gamma}$ with $h^n(b)=c$ } \\
  & f_{j(b,k)}(c)=f_{j(b,k')}(c) \quad 1\le k,k' \le
  e_{b}\, \}.
\end{align*}
\end{lemma}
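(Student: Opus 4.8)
The plan is to recognize the statement as a direct application of Lemma~\ref{lem:module} to the iterated self-similar map $\gamma^n$, transported along the isomorphism $X_{\gamma}^{\otimes n} \simeq X_{\gamma^n}$ established just above. Recall that $\gamma^n = \{\gamma_{\bf i}\}_{{\bf i} \in \Sigma^n}$ is itself a self-similar map on the \emph{same} space $K$, now a family of $N^n$ proper contractions indexed by the multi-indices ${\bf i} \in \Sigma^n$. Fixing once and for all a bijection $\Sigma^n \cong \{1,\dots,N^n\}$, its free module is $Y_{\gamma^n} = A^{N^n}$, and the whole construction preceding Lemma~\ref{lem:module} applies verbatim with $N$ replaced by $N^n$, the maps $\gamma_i$ by $\gamma_{\bf i}$, and the branch data of $\gamma$ by those of $\gamma^n$.

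First I would confirm that the $Z$-construction genuinely transfers. The only structural input it uses is a continuous one-sided inverse, and here $h^n$ supplies it: since $h(\gamma_j(z)) = z$ for every $j$, peeling indices off one at a time yields $h^n(\gamma_{\bf i}(y)) = y$ for each ${\bf i} \in \Sigma^n$. By Lemma~\ref{lem:branch} the sets $B_{\gamma^n}$ and $C_{\gamma^n}$ are finite, so that the branch index $e_b$ and the ordered labels $j(b,1) < \dots < j(b,e_b)$ are defined for each $b \in B_{\gamma^n}$ with $h^n(b) = c$ exactly as in the $n=1$ notation. Substituting these data into the definition of the submodule $Z$ produces precisely the set $Z_{\gamma^n} \subset A^{N^n}$ written in the statement, and the same computation used for $Z_{\gamma}$ shows without change that it is a closed subspace stable under the left and right $A$-actions, hence an $A$-$A$ correspondence with the inherited inner product.

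Next I would apply Lemma~\ref{lem:module} to $\gamma^n$ itself. Writing $X_{\gamma^n} = {\rm C}(\C_{\gamma^n})$, the map $\varphi(\xi)(y) = (\xi(\gamma_{\bf i}(y), y))_{{\bf i} \in \Sigma^n}$ is an isomorphism of \cst-correspondences $X_{\gamma^n} \simeq Z_{\gamma^n}$, its inverse $\psi$ being well defined precisely because the fiber relations $f_{j(b,k)}(c) = f_{j(b,k')}(c)$ cutting out $Z_{\gamma^n}$ are exactly what is needed for $\psi(f)$ to be single-valued on $\C_{\gamma^n}$. Composing with the already-established isomorphism $X_{\gamma}^{\otimes n} \simeq X_{\gamma^n}$ then gives $X_{\gamma}^{\otimes n} \simeq Z_{\gamma^n}$, which is the assertion.

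There is no genuinely hard step here; the content is simply the observation that $\gamma^n$ is again a self-similar map, so that the $n=1$ result (Lemma~\ref{lem:module}) applies to it directly without needing to re-verify Assumption~B for the composite. The only point demanding care is the bookkeeping: one must check that the branch set, branch-value set, branch indices and labels computed for the \emph{composite} map $\gamma^n$ (as in Lemma~\ref{lem:branch}) coincide with the data appearing in the displayed description of $Z_{\gamma^n}$, and that the fixed bijection $\Sigma^n \cong \{1,\dots,N^n\}$ is used consistently when passing between the multi-index picture and the coordinate picture $A^{N^n}$. Once this identification is pinned down, the proof reduces to the $n=1$ case already established.
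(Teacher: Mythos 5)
Your proposal is correct and follows exactly the paper's route: the paper's proof is the one-line observation that the lemma ``follows from the isomorphism between $X_{\gamma}^{\otimes n}$ and $X_{\gamma^n}$ and Lemma~\ref{lem:module},'' i.e.\ apply the $n=1$ result to the self-similar map $\gamma^n$ and compose with $X_{\gamma}^{\otimes n}\simeq X_{\gamma^n}$. Your additional bookkeeping --- that $h^n$ is the continuous left inverse for $\gamma^n$, that $B_{\gamma^n}$ and $C_{\gamma^n}$ are finite by Lemma~\ref{lem:branch}, and that a fixed bijection $\Sigma^n\cong\{1,\dots,N^n\}$ identifies the pictures --- is exactly what the paper leaves implicit, so no further comment is needed.
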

\begin{proof}
It  follows from the isomorphism between $X_{\gamma}^{\otimes n}$ and
  $X_{\gamma^n}$ and Lemma \ref{lem:module}.
\end{proof}

\begin{prop} Let  $\gamma$  be a self-similar map  
on a compact metric space $K$ and 
satisfies Assumption B. Then $\K_0(X_{\gamma}^{\otimes n})$ coincides
with  $\K(X_{\gamma}^{\otimes n})$ and is isomorphic to the  closed
  *subalgebra  $D^{\gamma^n}$ of $M_{N^n}(A)$.
\end{prop}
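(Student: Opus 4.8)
The plan is to observe that $\gamma^n=\{\gamma_{\bf i}\}_{{\bf i}\in\Sigma^n}$ is itself a self-similar map on the same compact space $K$, to verify that it again satisfies Assumption B, and then to read off the conclusion by applying the machinery already developed (Lemmas \ref{lem:basis}--\ref{lem:finite-rank}) to $\gamma^n$ in place of $\gamma$ and transporting the result along the correspondence isomorphisms $X_{\gamma}^{\otimes n}\simeq X_{\gamma^n}\simeq Z_{\gamma^n}$ recalled above. Once Assumption B is known for $\gamma^n$, the algebra $D^{\gamma^n}$ defined above is exactly the $D$-algebra attached to the self-similar map $\gamma^n$, so no new computation is needed.

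Conditions (1) and (2) of Assumption B for $\gamma^n$ are immediate. For (1), the iterate $h^n$ is continuous and satisfies $h^n(\gamma_{\bf i}(y))=y$, since $h(\gamma_j(z))=z$ strips off the outermost contraction one factor at a time. For (2), the finiteness of $B_{\gamma^n}$, and hence of $C_{\gamma^n}$, is precisely Lemma \ref{lem:branch}.

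The substantive step, and the one I expect to be the main obstacle, is condition (3), namely $B_{\gamma^n}\cap P_{\gamma^n}=\emptyset$. I would first prove the inclusion $P_{\gamma^n}\subseteq P_{\gamma}$. If $a\in P_{\gamma^n}$, then $\gamma_{\bf m}(a)\in B_{\gamma^n}$ for some $\gamma$-word ${\bf m}$ of length $nl$ with $l\ge 1$; by the description of $B_{\gamma^n}$ in Lemma \ref{lem:branch} we may write this value as $\gamma_{\bf j}(b)$ with $b\in B_{\gamma}$ and $|{\bf j}|=k\le n-1$. Applying $h^k$ strips ${\bf j}$ off the right-hand side to leave $b$, and strips $k$ outer factors off $\gamma_{\bf m}$ to leave $\gamma_{{\bf m}'}(a)$ with $|{\bf m}'|=nl-k\ge 1$; thus $\gamma_{{\bf m}'}(a)=b\in B_{\gamma}$ with a nonempty word, so $a\in P_{\gamma}$. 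Now if $d\in B_{\gamma^n}\cap P_{\gamma^n}$, write $d=\gamma_{\bf j}(b)$ with $b\in B_{\gamma}$ (Lemma \ref{lem:branch}); since $d\in P_{\gamma^n}\subseteq P_{\gamma}$ there is a nonempty $\gamma$-word $w$ with $\gamma_w(d)\in B_{\gamma}$, and then $\gamma_w(d)=\gamma_w\circ\gamma_{\bf j}(b)$ exhibits $b$ as a point carried into $B_{\gamma}$ by a word of positive length, i.e. $b\in P_{\gamma}$. This forces $b\in B_{\gamma}\cap P_{\gamma}$, contradicting condition (3) of Assumption B for $\gamma$. Hence (3) holds and $\gamma^n$ satisfies Assumption B.

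With Assumption B in force for $\gamma^n$, Lemma \ref{lem:finite-rank} applied to $\gamma^n$ yields $\K_0(Z_{\gamma^n})=\K(Z_{\gamma^n})$ together with a $*$-isomorphism onto $D^{\gamma^n}\subset M_{N^n}(A)$. Finally I would transport this along the isomorphisms of correspondences $X_{\gamma}^{\otimes n}\simeq X_{\gamma^n}\simeq Z_{\gamma^n}$ (the last via Lemma \ref{lem:module}): an isomorphism of Hilbert correspondences carries rank-one operators to rank-one operators and is isometric, so it identifies finite-rank operators with finite-rank operators and their norm closures with their norm closures. Consequently $\K_0(X_{\gamma}^{\otimes n})=\K(X_{\gamma}^{\otimes n})$ and $\K(X_{\gamma}^{\otimes n})\cong D^{\gamma^n}$, as claimed. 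Apart from the verification of (3), every ingredient is an instance of the already-established structure theory applied to the self-similar map $\gamma^n$.
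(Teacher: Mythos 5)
Your proposal is correct and takes essentially the same route as the paper, whose proof of this proposition is a one-line reduction via the isomorphisms $X_{\gamma}^{\otimes n}\simeq X_{\gamma^n}\simeq Z_{\gamma^n}$ together with Lemma \ref{lem:module} and Lemma \ref{lem:finite-rank} applied to the self-similar map $\gamma^n$. Your explicit check that $\gamma^n$ satisfies condition (3) of Assumption B (via the inclusion $P_{\gamma^n}\subseteq P_{\gamma}$) is sound and actually fills in a detail the paper leaves implicit, since Lemma \ref{lem:branch} only records the finiteness of $B_{\gamma^n}$ and $C_{\gamma^n}$.
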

\begin{proof}
The proposition follows from the isomorphism between
  $X_{\gamma}^{\otimes n}$
and $X_{\gamma^n}$, Lemma \ref{lem:module} and Lemma \ref{lem:finite-rank}. .
\end{proof}

We shall give a matrix representation of the finite core $\F^{(n)}$ in
$M_{N^n}(A)$. Let 
\[
\delta^{(r)} : D^{\gamma^r} \rightarrow K(Z_{\gamma}^{\otimes r})
\]
be the isometric onto *-isomorphism 
defined by the restriction to 
$Z_{\gamma}^{\otimes r}$. 
We put 
\[\Omega^{(r)}=(\delta^{(r)})^{-1}: K(Z_{\gamma}^{\otimes r}) 
\rightarrow D^{\gamma^r}.
\] 

We consider a family  $(\F^{(n)})_n$ of subalgebras of the core:
$$
\F^{(n)} = A \otimes I + \K(X)\otimes I + \K(X^{\otimes 2})\otimes I +
 \cdots + \K(X^{\otimes n}) \subset \L(X^{\otimes n})
$$
We embed $\F^{(n)}$ into $\F^{(n+1)}$ by $T \mapsto T\otimes I$ for 
$T \in \F^{(n)}$.  Let 
$\F^{(\infty)} = \overline{\bigcup_{n=0}^{\infty}\F^{(n)}}$ 
be the inductive limit algebra.

We note that $\F^{(n+1)} = \F^{(n)} \otimes I + \K(X^{\otimes n+1})$. 
Thus  $\F^{(n)}$ is
a \cst-subalgebra  of $\F^{(n+1)}$ containing unit and $\K(X^{\otimes
n+1})$ is an ideal of $\F^{(n+1)}$. We sometimes write 
$\F^{(n+1)} = \F^{(n)} + \K(X^{\otimes n+1})$ for short. 
It is difficult to describe the extension of ideals of a subalgebra and
an ideal to their sum. But in our case 
we can  use the Pimsner's analysis above 
of the core to get a global matrix representation 
$\Pi^{(n)}: \F^{(n)} \rightarrow M_{N^n}(A)$.

We introduce a subalgebra $E^{\gamma}$ 
of $\K(Y_{\gamma})= \L(Y_{\gamma})$ 
which preserve $Z_{\gamma}$: 
\[
  E^{\gamma} := \{\, a=[a_{i,j}]_{ij} \in M_N(A)=\L(Y_{\gamma}) \,|\, 
aZ_{\gamma} \subset
  Z_{\gamma} \,\}.
\]
Here we identify  $E^{\gamma} \subset \L(Y_{\gamma})$ with the corresponding 
subalgebra of $M_N(A)$.  
The inclusion  $\K(Z_{\gamma} \subset Y_{\gamma}) \subset E^{\gamma}$ 
is identified with the inclusion $D^{\gamma}\subset E^{\gamma}$. 
We note that there exist elements of $E^{\gamma}$ which are not
contained in $D^{\gamma}$, and there can exist elements of
$\L(Z_{\gamma})$ which do not extend to $Y_{\gamma}$.

\begin{prop} \label{prop:norm}
The restriction map $\delta : E^{\gamma} \rightarrow \L(Z_{\gamma})$ 
is an isometric algebra homomorphism and is an *-homomorphism 
on $E^{\gamma} \cap (E^{\gamma})^*$. 
\end{prop}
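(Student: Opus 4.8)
The plan is to verify the three asserted properties in turn, with the isometry as the crux. Linearity and multiplicativity of $\delta$ are immediate: restriction of operators is linear, and since every $a\in E^{\gamma}$ satisfies $aZ_{\gamma}\subset Z_{\gamma}$, for $a,b\in E^{\gamma}$ one has $(ab)|_{Z_{\gamma}}=(a|_{Z_{\gamma}})(b|_{Z_{\gamma}})$ as operators on $Z_{\gamma}$. To handle norms I would first record the two fiberwise formulas. Writing $a\in E^{\gamma}\subset M_N(A)=C(K,M_N(\comp))$ and using that the inner product on $Y_{\gamma}$ and on its submodule $Z_{\gamma}$ is $(f|g)_A(c)=\langle f(c),g(c)\rangle_{\comp^N}$, so that $\|f\|_{Z_{\gamma}}=\sup_c\|f(c)\|$, one obtains
\[
\|a\|_{\L(Y_{\gamma})}=\sup_{c\in K}\|a(c)\|_{M_N(\comp)},
\qquad
\|\delta(a)\|=\sup_{c\in K}\bigl\|a(c)|_{Z_{\gamma}(c)}\bigr\|,
\]
the second being the operator norm of $\delta(a)$, a supremum over the compressions of $a(c)$ to the fibers $Z_{\gamma}(c)\subset\comp^N$.

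The inequality $\|\delta(a)\|\le\|a\|$ is trivial from these formulas since $Z_{\gamma}(c)\subset\comp^N$. The reverse inequality is the step I expect to be the main obstacle: at a point $c\in C_{\gamma}$ the compressed norm $\|a(c)|_{Z_{\gamma}(c)}\|$ may be strictly smaller than $\|a(c)\|$, so a naive fiber-by-fiber comparison fails. The resolution is to exploit that $C_{\gamma}$ is finite. For $c\notin C_{\gamma}$ the defining equations of $Z_{\gamma}$ impose no condition at $c$, and choosing (by Urysohn) a scalar $\chi\in A$ with $\chi(c)=1$ vanishing on the finite set $C_{\gamma}$, the section $\chi v$ lies in $Z_{\gamma}$ for every $v\in\comp^N$; hence $Z_{\gamma}(c)=\comp^N$, consistent with Lemma \ref{lem:basis} giving $\dim Z_{\gamma}(c)=w_c=N$, and so $\|a(c)|_{Z_{\gamma}(c)}\|=\|a(c)\|$ there. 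Fixing such a $c$ and a unit vector $v$ with $\|a(c)v\|=\|a(c)\|$, I would take $g\in Z_{\gamma}$ with $g(c)=v$ and cut it down by a bump function supported near $c$ to produce $f\in Z_{\gamma}$ with $\|f\|_{Z_{\gamma}}\le 1+\varepsilon$ and $\|af\|_{Z_{\gamma}}\ge\|a(c)v\|=\|a(c)\|$; letting $\varepsilon\to 0$ gives $\|\delta(a)\|\ge\|a(c)\|$ for every $c\notin C_{\gamma}$.

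Finally I would combine this with continuity and density. Since $a\in C(K,M_N(\comp))$, the map $c\mapsto\|a(c)\|$ is continuous, and since $C_{\gamma}$ is finite while $K$ has no isolated points, $K\setminus C_{\gamma}$ is dense in $K$; hence $\sup_{c\notin C_{\gamma}}\|a(c)\|=\sup_{c\in K}\|a(c)\|=\|a\|$, yielding $\|\delta(a)\|\ge\|a\|$ and therefore isometry. For the $*$-property, let $a\in E^{\gamma}\cap(E^{\gamma})^{*}$, so that both $a$ and $a^{*}$ preserve $Z_{\gamma}$. The adjoint relation $(af|g)_A=(f|a^{*}g)_A$ holds in $Y_{\gamma}$ for all $f,g$, and in particular for $f,g\in Z_{\gamma}$, where all four vectors $f,g,af,a^{*}g$ lie in $Z_{\gamma}$; this exhibits $\delta(a^{*})=a^{*}|_{Z_{\gamma}}$ as the adjoint of $\delta(a)$ in $\L(Z_{\gamma})$, so $\delta(a)^{*}=\delta(a^{*})$. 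Thus on the $*$-closed subalgebra $E^{\gamma}\cap(E^{\gamma})^{*}$ the map $\delta$ is a multiplicative, involution-preserving isometry, i.e.\ a $*$-homomorphism.
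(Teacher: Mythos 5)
Your proof is correct, and while it rests on the same two underlying facts as the paper's proof --- that multiplying by a scalar function vanishing on the finite set $C_{\gamma}$ always lands you in $Z_{\gamma}$, and that $C_{\gamma}$ is finite and contains no isolated points of $K$ --- the execution is genuinely different. The paper argues globally: it fixes a sequence $f_n \in A$ with $f_n=0$ on a $1/n$-neighborhood of $C_{\gamma}$ and $f_n=1$ outside a $2/n$-neighborhood, observes that $\xi f_n \in Z_{\gamma}$ for \emph{every} $\xi \in Y_{\gamma}$, and uses $\|\xi f_n\| \to \|\xi\|$ together with $\|T(\xi f_n)\| = \|(T\xi)f_n\| \to \|T\xi\|$ to transfer the supremum defining $\|T\|$ from $Y_{\gamma}$ to $Z_{\gamma}$ in one stroke, with no fiber analysis at all. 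You instead localize at points: you show $Z_{\gamma}(c)=\comp^N$ for $c \notin C_{\gamma}$, witness $\|a(c)\|$ there by a cut-down section of $Z_{\gamma}$, and finish by continuity of $c \mapsto \|a(c)\|$ plus density of $K \setminus C_{\gamma}$. What your route buys is the explicit fiber picture (the formula $\|\delta(a)\| = \sup_{c}\|a(c)|_{Z_{\gamma}(c)}\|$ and the fullness of the fibers off $C_{\gamma}$, consistent with Lemma \ref{lem:basis}), which is in the spirit of how the paper later analyzes ideals fiberwise; the paper's route avoids this bookkeeping. Two small remarks: your $\varepsilon$-bump is unnecessary, since your own Urysohn section $\chi v$ with $0 \le \chi \le 1$, $\chi(c)=1$ and $\chi|_{C_{\gamma}}=0$ already lies in $Z_{\gamma}$, has norm at most $1$, and takes the value $v$ at $c$, so it directly gives $\|\delta(a)\| \ge \|a(c)\|$; and your closing verification that $\delta(a^*)$ is the adjoint of $\delta(a)$ on $E^{\gamma} \cap (E^{\gamma})^*$ is in fact slightly more careful than the paper, which leaves adjointability implicit (for $a \in E^{\gamma}$ alone, $a|_{Z_{\gamma}}$ is a priori only a bounded module map, a point the proposition's phrasing glosses over in both treatments).
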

\begin{proof}
For $\varepsilon>0$, we put $U^{\varepsilon}(C_{\gamma}) = \{\,x \in
  K\,|\,d(x,c)<\varepsilon \text{ for some $c \in C_{\gamma}$} \,\}$.
We take an integer $n_0$ such that $2/n_0 < \min_{c \ne c' (c,c'
\in C_{\gamma}) }d(c,c')$.
For each integer $n \ge n_0$, we take a function $f_n \in A$
  such that $0 \le f_n(x)\le 1$ and $f_n(x)=0$ on $U^{1/n}(C_{\gamma})$ and
  $f_n(x)=1$ outside $U^{2/n}(C_{\gamma})$. 

 Let $T \in E^{\gamma}$. Then for each $\xi \in Y_{\gamma}$, we have 
$\xi f_n \in Z_{\gamma}$. Moreover since $C_{\gamma}$ is a finite set and 
any point in $C_{\gamma}$ is not an isolated point, we have
\[
 \lim_{n \to \infty}\|\xi f_{n}\| =  \| \xi \|, \ \ \ 
{\text and } \ \ 
\lim_{n \to \infty}\|T(\xi f_{n})\| =  \lim_{n \to \infty}
  \|(T\xi )f_{n}\| = \|T \xi \|.
\]
Therefore $\|\delta(T) \| = \|T\|$. 
\end{proof}

 For $r \in {\mathbb N}$,  we also define a closed subalgebra $E^{\gamma^r}$
\[
  E^{\gamma^r} := \{\, a=[a_{i,j}]_{ij} \in M_{N^r}(A)
=\K(Y_{\gamma}^{\otimes r}) \,|\, 
aZ_{{\gamma}^{\otimes r}} \subset Z_{{\gamma}^{\otimes r}}
   \,\}.
\]
and identify $E^{\gamma^r}$ with the corresponding subalgebra 
 of $M_{N^r}(A)$ as the $\gamma$ case.

We shall extend the restriction map 
\[
\delta^{(r)} : D^{\gamma^r} \rightarrow K(Z_{\gamma}^{\otimes r}), 
\]
to the restriction map, with the same symbol, 
\[
\delta^{(r)} : E^{\gamma^r} \rightarrow \L(Z_{\gamma}^{\otimes r}), 
\]
which is an isometric  subalgebra homomorphism.

We define 
\[
\varepsilon(r)=\delta(r)^{-1}: 
\delta^{(r)}(E^{\gamma^r}\cap (E^{\gamma^r})^*) 
\rightarrow E^{\gamma^r}\cap (E^{\gamma^r})^*
\]

For fixed positive integer $n > 0$, we take an integer $0 \le r \le
n$.  Taking $T \in \K(Z_{\gamma}^{\otimes r})$, $T$ is represented
in $\L(Z_{\gamma}^{\otimes n})$ as $\phi^{(n,r)}(T) =T \otimes I_{n-r}$.
The map $\phi^{(n,r)}$ is a representation of $\K(Z_{\gamma}^{\otimes r})$
in $\L(Z_{\gamma}^{\otimes n})$.  On the other hand, $T \in
\K(Z_{\gamma}^{\otimes r})$ extends to $Y_{\gamma}^{\otimes r}$,
and is represented as an element $\Omega^{(r)}(T)$ in
$M_{N^r}(A)=\K(Y_{\gamma}^{\otimes r})$.  We put $\Omega^{(n,r)}(T)
= \Omega^{(r)}(T) \otimes I_{n-r}$.  Thus 
\[
\Omega^{(n,r)}: \K(Z_{\gamma}^{\otimes r}) \rightarrow 
M_{N^n}(A)=\L(Y_{\gamma}^{\otimes n})
\]
Since $\Omega^{(n,r)}(T)$ for $T \in \K(Z_{\gamma}^{\otimes r})$
leaves $Z_{\gamma}^{\otimes n}$ invariant, it is an element in
$E^{\gamma^n}$.  Moreover  it holds that
\[
  \phi^{(n,r)}(T) =\delta^{(n)}(\Omega^{(n,r)}(T)).
\]

We shall explain these facts more precisely and 
investigate the form of $\Omega^{(n,r)}$.

We note that if we identify $Y_{\gamma}$ with $C(K,{\mathbb C}^N)$, 
then  we can identify 
$Y_{\gamma}^{\otimes n}$ with $C(K,{\mathbb C}^{N^n})$. For example, 
for $f = (f_i)_i, g = (g_i)_i, h = (h_i)_i \in 
Y_{\gamma}= C(K,{\mathbb C}^N)$, we can regard 
$f \otimes g \otimes h \in Y_{\gamma}^{\otimes 3}$ as an 
element in $C(K,{\mathbb C}^{N^3})$ by 
\[
(f \otimes g \otimes h)(x) 
= (f_{i_1}({\gamma}_{i_2}{\gamma}_{i_3}(x))g_{i_2}({\gamma}_{i_3}(x))
h_{i_3}(x))_{(i_1,i_2,i_3)}, 
\]
for $x \in K$ and ${\bf i}= (i_1,i_2,i_3) \in {\Sigma}^3$.

We define $(\alpha_j(a))(x) = a(\gamma_j(x))$ for $a \in A$, and
$(\alpha_{\bf j}(a))(x)=a(\gamma_{\bf j}(x))$ for ${\bf j}\in \Sigma^s$.
For $T \in M_{N^{r}}(A)$, we define $\alpha_j(T) \in M_{N^{r}}(A)$ and
$\alpha_{{\bf j}}(T) \in M_{N^{r}}(A)$ for ${\bf j} \in \Sigma^s$
by
\[
(\alpha_s(T))_{ij}  = \alpha_s(T_{ij}),  \quad
(\alpha_{\bf j}(T))_{ij}  = \alpha_{\bf j}(T_{ij}).
\]
Let $\{\,A_{i_1,\dots,i_s}\,|\,(i_1,\dots,i_s) \in
\Sigma^{s}\,\}$ be a family of square matrices.
We denote by
\[
  \diag(A_{i_1,\dots,i_s})_{(i_1,\dots,i_s) \in \Sigma^s}
\]
the block diagonal matrix with diagonal elements in
$\{\,A_{i_1,\dots,i_s}\,|\,(i_1,\dots,i_s) \in \Sigma^{s}\,\}$.

We use lexicographical order for elements in $\Sigma^s$.
We write $(i_1,\dots,i_s)< (j_1,\dots,j_s)$ if $i_1=j_1$, $\cdots$,
$i_t=j_t$ and $i_{t+1}< j_{t+1}$ for some $1 \le t \le s-1$.

\begin{lemma} The natural embedding 
\[
 \L(Y_{\gamma}^{\otimes r}) \ni T \mapsto 
T \otimes I_{n-r} \in \L(Y_{\gamma}^{\otimes n}) 
\]
is identified with  the matrix algebra embedding  
\[
 M_{N^r}(A) \ni T \mapsto 
\diag
  (\alpha_{(i_{n}, i_{n-1},\dots,i_{r+1})}(T))_{(i_n,i_{n-1},\dots,i_{r+1})
  \in \Sigma^{n-r}}
\]
\end{lemma}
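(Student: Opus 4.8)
The plan is to verify the asserted matrix identity directly in the coordinates supplied by the identification of $Y_{\gamma}^{\otimes n}$ with $C(K,\comp^{N^n})$ recorded above. Since $Y_{\gamma}=A^N$ is free, $Y_{\gamma}^{\otimes n}$ is a free right $A$-module with basis the elementary tensors $\delta_{i_1}\otimes\cdots\otimes\delta_{i_n}$, where $\delta_i\in A^N$ carries the constant function $1$ in slot $i$ and $0$ elsewhere; and both $T\otimes I_{n-r}$ and the proposed block-diagonal matrix are adjointable, hence bounded $A$-linear, operators in $\L(Y_{\gamma}^{\otimes n})$. It therefore suffices to compare their values on elementary tensors $\xi\otimes\eta$ with $\xi\in Y_{\gamma}^{\otimes r}$ and $\eta\in Y_{\gamma}^{\otimes(n-r)}$, since these span $Y_{\gamma}^{\otimes n}$. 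First I would record the bookkeeping that underlies everything: splitting a multi-index as $\mathbf{i}=(\mathbf{k},i_{r+1},\dots,i_n)$ with head $\mathbf{k}=(i_1,\dots,i_r)\in\Sigma^r$, the tensor-product formula gives
\[
(\xi\otimes\eta)_{(\mathbf{k},i_{r+1},\dots,i_n)}(x)
=\xi_{\mathbf{k}}(\gamma_{\mathbf{j}}(x))\,\eta_{(i_{r+1},\dots,i_n)}(x),
\qquad \mathbf{j}:=(i_n,i_{n-1},\dots,i_{r+1}),
\]
the reversal in $\mathbf{j}$ being exactly what makes $\gamma_{\mathbf{j}}=\gamma_{i_{r+1}}\circ\cdots\circ\gamma_{i_n}$ the composition that appears to the left of the $r$-th slot.

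The key computation is then short. Using $(T\otimes I_{n-r})(\xi\otimes\eta)=(T\xi)\otimes\eta$ together with the matrix action $(T\xi)_{\mathbf{k}}(y)=\sum_{\mathbf{l}}T_{\mathbf{k},\mathbf{l}}(y)\,\xi_{\mathbf{l}}(y)$ on $Y_{\gamma}^{\otimes r}$, I evaluate at $y=\gamma_{\mathbf{j}}(x)$ and read off
\[
\bigl((T\xi)\otimes\eta\bigr)_{(\mathbf{k},i_{r+1},\dots,i_n)}(x)
=\sum_{\mathbf{l}}\alpha_{\mathbf{j}}(T_{\mathbf{k},\mathbf{l}})(x)\,
(\xi\otimes\eta)_{(\mathbf{l},i_{r+1},\dots,i_n)}(x),
\]
where I have used $T_{\mathbf{k},\mathbf{l}}(\gamma_{\mathbf{j}}(x))=\alpha_{\mathbf{j}}(T_{\mathbf{k},\mathbf{l}})(x)$ by definition of $\alpha_{\mathbf{j}}$ and $\xi_{\mathbf{l}}(\gamma_{\mathbf{j}}(x))\,\eta_{(i_{r+1},\dots,i_n)}(x)=(\xi\otimes\eta)_{(\mathbf{l},i_{r+1},\dots,i_n)}(x)$. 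This says that the matrix entry linking two basis indices with heads $\mathbf{k},\mathbf{l}$ and a common tail (whose reversal is $\mathbf{j}$) equals $\alpha_{\mathbf{j}}(T_{\mathbf{k},\mathbf{l}})$, while all entries joining distinct tails vanish. Grouping the basis by the tail index (equivalently by its reversal $\mathbf{j}$) as the most significant block index then yields precisely $\diag(\alpha_{\mathbf{j}}(T))_{\mathbf{j}\in\Sigma^{n-r}}$.

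The main point requiring care, and the only genuine obstacle, is tracking the evaluation points through the inner (balanced) tensor product: the shift of the head's argument by $\gamma_{\mathbf{j}}$ is forced by the left $A$-action twist $(a\cdot f)_i(y)=a(\gamma_i(y))f_i(y)$ built into the identification, and it is exactly this shift that turns $T$ into $\alpha_{\mathbf{j}}(T)$ rather than $T$ itself. I would make the coordinate expression for $\xi\otimes\eta$ legitimate by first checking the balancing relation $fa\otimes g=f\otimes a g$ against the tensor-product formula, and I would emphasize that the block-diagonal description presupposes organizing $\Sigma^n$ with the tail as the outer index. As an alternative that avoids handling the full multi-index at once, one can argue by induction on $n-r$: the case $r=n-1$ is the computation above with $\mathbf{j}=(i_n)$, and the inductive step rests on the composition rule $\alpha_j\circ\alpha_{\mathbf{j}'}=\alpha_{(j,\mathbf{j}')}$, which in turn follows from $\gamma_{(j,\mathbf{j}')}=\gamma_{\mathbf{j}'}\circ\gamma_j$.
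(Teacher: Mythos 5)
Your proof is correct and takes essentially the same route as the paper: both arguments verify the identity on elementary tensors (the paper on the module basis $\delta_{i_1}\otimes\cdots\otimes\delta_{i_n}$, you on general $\xi\otimes\eta$), and the evaluation-point shift $T_{\mathbf{k},\mathbf{l}}(\gamma_{\mathbf{j}}(x))=\alpha_{\mathbf{j}}(T_{\mathbf{k},\mathbf{l}})(x)$ that you track through the identification $Y_{\gamma}^{\otimes n}\cong C(K,\comp^{N^n})$ is precisely the paper's commutation rule $f\cdot\delta_i=\delta_i\cdot\alpha_i(f)$ used to push coefficients past the tail. Your two explicit caveats --- that boundedness and $A$-linearity justify checking only on elementary tensors, and that the block-diagonal form presupposes ordering $\Sigma^n$ with the (reversed) tail as the outermost index --- make precise points the paper leaves implicit, but they do not change the substance of the argument.
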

\begin{proof} We note that $\{\delta_{i_1}\otimes \cdots \otimes
  \delta_{i_r}\}_{(i_1,\dots,i_r) \in \Sigma^r}$ constitutes a base
of $A^r$ and $\{\delta_{i_1}\otimes \cdots \otimes
  \delta_{i_n}\}_{(i_1,\dots,i_n) \in \Sigma^n}$ constitutes a base
of $A^n$.  We write
 
$T=[T_{(i_1,\dots,i_r),(j_1,\dots,j_r)}]_{((i_1,\dots,i_r),(j_1,\dots,j_r)}]
\in M_{N^r}(A)$.  Then
\[
  T (\delta_{i_1}\otimes \cdots \otimes \delta_{i_r})
    = \sum_{(j_1,\dots,j_r) \in \Sigma^r}
     \delta_{j_1}\otimes \cdots \otimes \delta_{j_r}
     T_{(j_1,\dots,j_r), (i_1,\dots,i_r)}
\]
Then it follows that
\begin{align*}
  & (T \otimes I_{n-r})(\delta_{i_1}\otimes \cdots \otimes \delta_{i_r}
   \otimes \delta_{i_{r+1}} \otimes \cdots \otimes \delta_{i_n}) \\
= & T(\delta_{i_1}\otimes \cdots \otimes \delta_{i_r}) \otimes 
\delta_{i_{r+1}}
  \otimes \cdots \otimes \delta_{i_n} \\
= & \sum_{(j_1,\dots,j_r) \in \Sigma^r}
     (\delta_{j_1}\otimes \cdots \otimes \delta_{j_r})
     T_{(j_1,\dots,j_r), (i_1,\dots,i_r)} \otimes \delta_{i_{r+1}} \otimes
  \cdots \otimes \delta_{i_n} \\
=&\sum_{(j_1,\dots,j_r) \in \Sigma^r}
     (\delta_{j_1}\otimes \cdots \otimes \delta_{j_r})
  \otimes (\delta_{i_{r+1}} \otimes \cdots \otimes \delta_{i_n})
  \alpha_{i_n}\circ \cdots \alpha_{i_{r+1}}
  \left(T_{(j_1,\dots,j_r), (i_1,\dots,i_r)} \right) \\
=& \diag (
  \alpha_{(i_{i_n}, \dots, i_{r+1})}(T))_{(i_{i_n}, \dots, i_{r+1})
  \in \Sigma^{n-r}},
\end{align*}
we have used that $(f\cdot \delta_i)(x) 
= \alpha_i(f)(x)\delta_i(x) = (\delta_i \cdot 
  \alpha_{i}(f))(x)$ for $f \in A$.
\end{proof}

We describe the form of 
\[
\Omega^{(n,r)} : \K(Z_{\gamma}^{\otimes r}) 
\rightarrow \L(Y_{\gamma}^{\otimes n}) = M_{N^n}(A)
\]

For $T\in \K(Z_{\gamma}^{\otimes n-1})$, we have
\[
  \Omega^{(n,n-1)}(T) =   \begin{pmatrix}
    \alpha_{1}([\Omega^{(n-1)}(T)_{ij}]_{ij})& 0 &  \cdots \\
    \vdots   & \ddots    & 0 \\
    0   & 0           & \alpha_N([\Omega^{(n-1)}T_{ij}]_{ij})
   \end{pmatrix}
  = \diag (\alpha_{i}(\Omega^{(n-1)}(T)))_{i \in \Sigma},
\]
which is written by ordinary matrix notation.
Similarly for $T\in \K(Z_{\gamma}^{\otimes r})$ $(0 \le r
\le n-1)$, $\Omega^{(n,r)}(T)$ is expressed as:
\[
  \Omega^{(n,r)}(T) =
  \diag
  (\alpha_{(i_{n},
  i_{n-1},\dots,i_{r+1})}(\Omega^{(r)}(T)))_{(i_n,i_{n-1},
  \dots, i_{r+1}) \in \Sigma^{n-r}},
\]
where we use lexicographic order for $\Sigma^{n-r}$.

Then we can check that for any $T \in \L(Y_{{\gamma}^r})$, $1 \leq r \leq n$ , 
if $T (Z_{{\gamma}^r}) \subset Z_{{\gamma}^r}$, then 
\[
(T\otimes I_{n-r})( Z_{{\gamma}^n})  \subset Z_{{\gamma}^n}
\]
that is,  $E^{{\gamma}^r} \otimes I_{n-r} \subset E^{{\gamma}^n}$.

\begin{thm} {\bf (matrix representation of the core)}
Let  $\gamma$  be a self-similar map  on a compact metric space $K$ and 
satisfies Assumption B. Then there exists an isometric $*$-homomorphism 
$\Pi^{(n)} :  \F^{(n)} \rightarrow M_{N^n}(A)$ such that,  
for $T=  \sum_{r=0}^{n}T_r \otimes I_{n-r} \in \F^{(n)}$ with  
$T_r \in \K(X_{\gamma}^{\otimes r})$,
\[
\Pi^{(n)}(T) =  \sum_{r=0}^n \Omega^{(n,r)}(T_r), 
\]
and if we identify $X_{\gamma}^{\otimes r}$ with 
$Z_{\gamma}^{\otimes r}$, then 
\[
\Omega^{(n,r)}(\theta^{Z_{{\gamma}^r}}_{x,y})
= \theta^{Y_{{\gamma}^r}}_{x,y} \otimes I_{n-r}. 
\]
The image $\Pi^{(n)}(T)$ is independent of the expression of $T=
\sum_{r=0}^{n}T_r \otimes I_{n-r} \in \F^{(n)}$. 

  Moreover the following diagram commutes:
\[
  \begin{CD}
   \F^{(n)} @> {\Pi^{(n)}}>>
    M_{N^n}(A)  \\
   @VVV @VVV \\
   \F^{(n+1)}  @>{\Pi^{(n+1)}}>>  M_{N^{n+1}}(A).
  \end{CD}
\]
In particular the core 
 $\F^{(\infty)}$ is represented in $M_{N^{\infty}}(A)$ as a \cst
-subalgebra.
\end{thm}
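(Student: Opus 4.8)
The plan is to construct $\Pi^{(n)}$ on the finite core $\F^{(n)}$ by treating each homogeneous piece separately and assembling them. Recall that every $T \in \F^{(n)}$ can be written $T = \sum_{r=0}^n T_r \otimes I_{n-r}$ with $T_r \in \K(X_\gamma^{\otimes r})$, where we identify $\K(X_\gamma^{\otimes 0}) = A$. After identifying $X_\gamma^{\otimes r}$ with $Z_\gamma^{\otimes r}$ (via the isomorphism of Lemma \ref{lem:module} applied to $\gamma^r$), each $T_r$ becomes an element of $\K(Z_\gamma^{\otimes r})$, and I define $\Pi^{(n)}(T) := \sum_{r=0}^n \Omega^{(n,r)}(T_r)$, where $\Omega^{(n,r)} = \Omega^{(r)}(\cdot) \otimes I_{n-r}$ uses the isometric isomorphism $\Omega^{(r)} = (\delta^{(r)})^{-1} : \K(Z_\gamma^{\otimes r}) \to D^{\gamma^r}$ of Lemma \ref{lem:finite-rank}. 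The first task is to show this is well-defined, i.e.\ independent of the decomposition of $T$. This is where the inductive structure $\F^{(n+1)} = \F^{(n)} \otimes I + \K(X^{\otimes n+1})$ is essential: the only ambiguity in the decomposition comes from the fact that $\K(Z_\gamma^{\otimes r})$, embedded into $\L(Z_\gamma^{\otimes n})$ by $\phi^{(n,r)}(\cdot) = \cdot \otimes I_{n-r}$, may overlap with the images of lower-level pieces. I would argue that if $\sum_r T_r \otimes I_{n-r} = 0$ in $\L(Z_\gamma^{\otimes n})$, then applying $\delta^{(n)}$ and using the already-established relation $\phi^{(n,r)}(T_r) = \delta^{(n)}(\Omega^{(n,r)}(T_r))$ gives $\delta^{(n)}\big(\sum_r \Omega^{(n,r)}(T_r)\big) = \sum_r \phi^{(n,r)}(T_r) = 0$; since $\delta^{(n)}$ is isometric (Proposition \ref{prop:norm} applied to $\gamma^n$), the matrix sum vanishes, giving well-definedness.

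\textbf{Isometry and $*$-homomorphism property.}
Once well-definedness is in hand, the key identity is $\phi^{(n)}(T) = \delta^{(n)}(\Pi^{(n)}(T))$, where $\phi^{(n)}$ is the identification of $T$ with its action on $Z_\gamma^{\otimes n}$; this follows by summing $\phi^{(n,r)}(T_r) = \delta^{(n)}(\Omega^{(n,r)}(T_r))$ over $r$ and using linearity of $\delta^{(n)}$. Since $\delta^{(n)} : E^{\gamma^n} \to \L(Z_\gamma^{\otimes n})$ is isometric (the extension to $E^{\gamma^n}$ noted after Proposition \ref{prop:norm}) and $\Pi^{(n)}(T)$ lands in $E^{\gamma^n}$ because each $\Omega^{(n,r)}(T_r)$ preserves $Z_\gamma^{\otimes n}$, we get $\|\Pi^{(n)}(T)\| = \|\delta^{(n)}(\Pi^{(n)}(T))\| = \|\phi^{(n)}(T)\| = \|T\|$, so $\Pi^{(n)}$ is isometric. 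For the $*$-homomorphism property, I note that $\F^{(n)}$ is a $C^*$-algebra and $\Pi^{(n)}(T)$ in fact lies in $E^{\gamma^n} \cap (E^{\gamma^n})^*$, where $\delta^{(n)}$ is a genuine $*$-isomorphism onto its image. Thus $\Pi^{(n)} = \varepsilon^{(n)} \circ \phi^{(n)}$ is the composition of the $*$-isomorphism $\phi^{(n)}$ (onto $\F^{(n)}$'s image in $\L(Z_\gamma^{\otimes n})$) with the inverse $*$-isomorphism $\varepsilon^{(n)} = (\delta^{(n)})^{-1}$, hence is a $*$-homomorphism.

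\textbf{Compatibility and the inductive limit.}
To verify that the square commutes, I take $T \in \F^{(n)}$ and compute both $\Pi^{(n+1)}(T \otimes I)$ and $(\Pi^{(n)}(T)) \otimes I$ in the sense of the matrix embedding $M_{N^n}(A) \hookrightarrow M_{N^{n+1}}(A)$ described in the lemma preceding the theorem, namely $S \mapsto \diag(\alpha_{i}(S))_{i \in \Sigma}$. The point is that $\Omega^{(n+1,r)}(T_r) = \Omega^{(n,r)}(T_r) \otimes I_1$ by the explicit $\diag$-formula for $\Omega^{(n,r)}$, since appending one more tensor factor is exactly the operation $S \mapsto \diag(\alpha_i(S))_{i}$; summing over $r$ gives $\Pi^{(n+1)}(T \otimes I) = \Pi^{(n)}(T) \otimes I$, which is the commutativity of the diagram. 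Finally, because the $\Pi^{(n)}$ are compatible isometric $*$-homomorphisms, they induce an isometric $*$-homomorphism on the inductive limit $\F^{(\infty)} = \overline{\bigcup_n \F^{(n)}}$ into $M_{N^\infty}(A) = \varinjlim M_{N^n}(A)$, realizing the core as a $C^*$-subalgebra.

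\textbf{Main obstacle.}
The hardest step is the well-definedness together with the precise bookkeeping of the $\diag(\alpha_{\bf j}(\cdot))$ structure. The subtlety is that $\Pi^{(n)}$ is defined piecewise on a decomposition that is \emph{not} unique, and the only tool forcing consistency is the isometry of $\delta^{(n)}$ on the larger algebra $E^{\gamma^n}$ — which is why the passage through $E^{\gamma^n}$ (rather than staying inside $D^{\gamma^n}$) is indispensable, since $\Omega^{(n,r)}(T_r)$ for $r<n$ need not lie in $D^{\gamma^n}$. Getting the $\alpha_{\bf j}$-twists in the lexicographically-ordered block-diagonal formula to align correctly across the levels $n$ and $n+1$ is where the genuine care is required; everything else is formal once that alignment is confirmed.
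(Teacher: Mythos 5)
Your proposal is correct and takes essentially the same route as the paper: both realize $\Pi^{(n)}$ as the composition $\varepsilon^{(n)}\circ\varphi$ of Pimsner's isometric $*$-representation $\varphi(T)=\sum_{r}\phi^{(n,r)}(T_r)$ on $X_\gamma^{\otimes n}$ with the inverse of the restriction map $\delta^{(n)}$, resting on the commuting square $\phi^{(n,r)}=\delta^{(n)}\circ\Omega^{(n,r)}$ and the isometry of $\delta^{(n)}$ on $E^{\gamma^n}\cap(E^{\gamma^n})^*$ from Proposition \ref{prop:norm}. Your explicit well-definedness argument via injectivity of $\delta^{(n)}$, and your verification of the diagram through $\Omega^{(n+1,r)}(T_r)=\Omega^{(n,r)}(T_r)\otimes I_1$, merely spell out steps the paper absorbs into the composition with $\varepsilon^{(n)}$ and the remark that ``the rest is clear.''
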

\begin{proof}
Consider the  following commutative diagram:
\[
   \begin{CD}
   (M_{N^r}(A) \supset ) D^{{\gamma}^r} @>{T \mapsto T \otimes I_{n-r}}>>  
    E^{{\gamma}^r} \cap  (E^{{\gamma}^r})^* (\subset M_{N^n}(A))  \\
   @A{\Omega^{(r)}}AA           @VV{\delta^{(n)}}V \\
    \K(X_{\gamma}^{\otimes r})  @>>{\phi^{(n,r)}}> 
  \L(X_{\gamma}^{\otimes n}) \simeq \L(Z_{\gamma}^{\otimes n}) ,
  \end{CD}
\]
It means that $\phi^{(n,r)}(S)$ extends to $M_{N^n}(A) \simeq
  \L(Y_{\gamma}^{\otimes n})$ and
$\phi^{(n,r)}(S)$ is identified with $\delta^{(n)}(\Omega^{(n,r)}(S))$ 
for $S \in \K(X_{\gamma}^{\otimes r})$.

Now we recall that Pimsner \cite{Pi} constructed the isometric 
$*$-homomorphism $\varphi : \F^{(n)} \rightarrow \L(X_{\gamma}^{\otimes n})$ 
such that for $T = \sum_{r=0}^{n}T_r \otimes I_{n-r}$, 
$T_r \in \K(X_{\gamma}^{\otimes r})$ $r=0,\dots,n$, 
\[
\varphi(T) =  \sum_{r=0}^{n}\phi^{(n,r)}(T_r)
\]
Since the restriction map 
\[
\delta^{(n)}: E^{{\gamma}^r} \cap (E^{{\gamma}^r})^* \rightarrow 
\L(Z_{\gamma}^{\otimes n}) \simeq \L(X_{\gamma}^{\otimes n}),
\]
is also an  isometric $*$-homomorphism, 
the composition of $\varphi$ with the inverse 
$\varepsilon^{(n)} := (\delta^{(n)})^{-1}$ on the image 
of $\delta^{(n)}$ 
gives
the desired  isometric $*$-homomorphism 
$\Pi^{(n)} :  \F^{(n)} \rightarrow M_{N^n}(A)$. 
Hence we have
\[
  \Pi^{(n)}(\sum_{r=0}^n T_r \otimes I) = 
    \varepsilon^{(n)}\left(\sum_{r=0}^{n}\phi^{(n,r)}(T_r)\right)
  = \sum_{r=0}^{n}\varepsilon^{(n)}(\phi^{(n,r)}(T_r))
  = \sum_{r=0}^n \Omega^{(n,r)}(T_r).
\]
Therefore  the rest is clear. 
\end{proof}

\section
{Classification of ideals}

We recall  the Rieffel correspondence on  ideals of Morita equivalent 
\cst -algebras in Rieffel \cite{R}, Zettl \cite{Z} and 
Raeburn and Williams \cite{RW}
, which plays an 
important role in our analysis of the ideal structure of the core. Let 
$A$ and $B$ be  \cst-algebras. Suppose that $B$ and $A$ are Morita 
equivalent by a equivalent bimodule $X = \ _BX_A$. Then $B$ and $A$ have 
the same ideal structure. Let 
${\mathcal Ideal}(A)$ (resp.${\mathcal Ideal}(B))$ be the set of ideals of 
$A$ (resp. $B$). Then there exists a lattice isomorphism between 
${\mathcal Ideal}(A)$ and ${\mathcal Ideal}(B)$. The correspondence is 
given by $\varphi : {\mathcal Ideal}(A) \rightarrow {\mathcal Ideal}(B)$ and 
$\psi : {\mathcal Ideal}(B) \rightarrow {\mathcal Ideal}(A)$ as 
follows: 
Let $J \in {\mathcal Ideal}(A)$ be an ideal of $A$. Then the 
corresponding ideal $I = \varphi(J)$ of $B$ is given by 
\begin{align*}
I  = \varphi(J) 
 & = \overline{\rm span}\{_B(x_1a_1\ |\ x_2a_2) \  
| \ x_1,x_2 \in X, a_1,a_2 \in J  \} \\
 & = \overline{\rm span}\{_B(x_1a\ |\ x_2) \  
| \ x_1,x_2 \in X, a \in J  \}. 
\end{align*}               
Let $I \in {\mathcal Ideal}(B)$ be an ideal of $B$. Then the 
corresponding ideal $J = \psi(I)$ of $A$ is given by 
\begin{align*}
J  = \psi(I) 
 & = \overline{\rm span}\{(b_1x_1\ |\ b_2x_2)_A \  
| \ x_1,x_2 \in X, b_1,b_2 \in I \} \\
& = \overline{\rm span}\{(x_1\ |\ bx_2)_A \  
| \ x_1,x_2 \in X, b \in I \} \\ 
\end{align*}    
Here, we have  
\begin{align*}
X_J & := \overline{\rm span}\{xa \ | \ x \in X, \ a \in J \} 
= \{ y \in X \ | \ (x|y)_A \in J \ \  {\text for \  any }\ \  x \in X \} \\
& = \{ y \in X \ | \ (y|y)_A \in J  \}.      
\end{align*}
Moreover we have  
\[
\varphi(J) = \{ b \in B \ | \ (x \ | \ b y)_A \in J 
   \ \  {\text for \  any }\ \ \  x,y  \in X \}.  
\]
and 
\[
\psi(I) = \{ a \in A \ | \ _B(xa \ | \ y)  \in J 
         \    \   {\text for \  any } \  \ x,y  \in X \}.  
\]
In fact, it is trivial that 
$\varphi(J) \subset \{ b \in B \ | \ (x \ | \ b y)_A \in J 
  \ \   {\text for \ any } \ \ x,y  \in X \} $.  
Conversely assume that $b \in B$ 
satisfies that  $(x \ | \ b y)_A \in J$  for any  $x,y  \in X$.  
Therefore  $by \in X_J$ for any $y \in X$. 
Since $_B(X \ | \ X )$ span a dense $^*$-ideal 
$L$ of $B$, 
the set of positive elements of $L$  of norm strictly less 
than 1 is an approximate unit of $B$. Therefore  $b$ is uniformly 
approximated by an element of the form 
\[
b \sum_i \ _B(x_i \ | \ y_i)  = \sum_i \ _B(bx_i \ | \ y_i) \in \varphi(J). 
\]
and $bx_i \in X_J$. 
Therefore $b$ is also in $\varphi(J)$. The rest is similarly proved.

For any ideal $I$ of the core $\F^{(\infty)}$, we shall associate a family 
$(F_n^I)_n$ of closed subsets of $K$ using the above 
Rieffel correspondence.

Recall that the bimodule module $X_{\gamma}^{\otimes n}$ gives 
Morita equivalence between  $\K(X_{\gamma}^{\otimes n})$ and $A = C(K)$. 
Let $I$ be an ideal of  $\F^{(\infty)}$. Then  
$I_n :=I \cap \K(X_{\gamma}^{\otimes n})$ is an ideal  of 
$\K(X_{\gamma}^{\otimes n})$. 
Let $J_n = \psi (I_n)$ be the corresponding ideal of $A = C(K)$ by the  
Rieffel correspondence. Let $F_n^I$ be the corresponding closed subset 
of $K$, that is, 
\[
F_n^I = \{ x \in K \ | \ a(x) = 0 \ \  {\text for \  any } \ \ a \in J_n \}
\]
\[
J_n = \{ a \in A =C(K) \ | \ \ a(x) = 0 \ \  {\text for \ any } \ \ x \in F_n^I \}  
\]

By the discussion above, we have the following: 

\begin{lemma} \label{lem:Morita}
Let $\gamma$ be a self-similar map satisfying assumption B.
Let  $I$ be an ideal of the core $\F^{(\infty)}$. 
Then 
\begin{align*}
&(1)\,\, F_n^I = \{\,x \in K\,|\, (\eta_1 | T \eta_2)_A(x) = 0 \text{
  for each $\eta_1$, $\eta_2 \in X_{\gamma}^{\otimes n}$, $T \in I \cap
  \K(X_{\gamma}^{\otimes n})$}\,\} \\
&(2)\,\, I_n = I \cap \K(X_{\gamma}^{\otimes n})
        = \{\,T \in \K(X_{\gamma}^{\otimes n})\,|\,(\eta_1|T\eta_2)_A(y)=0
            \text{ for each }  y \in F_I^n,\, \eta_1,\,\eta_2
  \in X_{\gamma}^{\otimes n}\,\}
\end{align*}
In particular, consider the case that $n = 1$ so that 
$I_1 = I \cap \K(X_{\gamma})$. Then 
\begin{align*}
&(1)\,\, F_1^I = \{\,x \in K\,|\, (\eta_1 | T \eta_2)_A(x) = 0 \text{
  for each $\eta_1$, $\eta_2 \in X_{\gamma}$, 
$T \in I_1 = I \cap \K(X_{\gamma})$}\,\} \\
&(2)\,\, I_1
        = \{\,T \in \K(X_{\gamma})\,|\,(\eta_1|T\eta_2)_A(y)=0
            \text{ for each }  y \in F_1^I ,\, \eta_1,\,\eta_2
  \in X_{\gamma}\,\}
\end{align*}
\end{lemma}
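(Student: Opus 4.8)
The statement is a direct consequence of the Rieffel correspondence for the Morita equivalence between $\K(X_\gamma^{\otimes n})$ and $A = C(K)$, combined with the explicit description of the correspondence already spelled out in the paragraphs preceding the lemma. The plan is to unwind the definitions of $F_n^I$ and $J_n = \psi(I_n)$ through those explicit formulas, so that the only real content is to verify that the two characterizations of $J_n$ match. Since $X_\gamma^{\otimes n}$ is full as a right Hilbert $A$-module (Lemma of Kajiwara–Watatani) and $\K(X_\gamma^{\otimes n})$ acts by the generalized compacts, the hypotheses of the Rieffel machinery recalled at the start of Section 4 are in force, with $B = \K(X_\gamma^{\otimes n})$, $A = C(K)$, and $X = X_\gamma^{\otimes n}$.

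First I would recall from the displayed formula above the lemma that, for the Morita equivalence at hand,
\[
J_n = \psi(I_n)
= \overline{\rm span}\{(\eta_1 \ | \ T\eta_2)_A \ | \ \eta_1,\eta_2 \in X_\gamma^{\otimes n},\ T \in I_n\}.
\]
To prove part (1), note that $x \in F_n^I$ iff $a(x)=0$ for every $a \in J_n$, by the definition of $F_n^I$. Because $J_n$ is the closed span of the functions $(\eta_1|T\eta_2)_A$ with $\eta_1,\eta_2 \in X_\gamma^{\otimes n}$ and $T \in I_n$, and because evaluation at $x$ is a continuous functional on $A=C(K)$, the condition ``$a(x)=0$ for all $a \in J_n$'' is equivalent to ``$(\eta_1|T\eta_2)_A(x)=0$ for all such $\eta_1,\eta_2,T$.'' This is exactly the claimed description of $F_n^I$.

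For part (2), I would argue that $I_n$ is recovered from $F_n^I$ via the inner products. One inclusion is immediate: if $T \in I_n$, then each $(\eta_1|T\eta_2)_A$ lies in $J_n$ and hence vanishes on $F_n^I$, so $T$ belongs to the right-hand set. For the reverse inclusion, I would use that $\K(X_\gamma^{\otimes n}) \cong D^{\gamma^n} \subset M_{N^n}(A)$ (the Proposition of Section 3) together with the fact that, under the Rieffel correspondence, ideals of $\K(X_\gamma^{\otimes n})$ correspond bijectively to ideals of $C(K)$, and the latter are exactly the closed-set ideals $J_n$. Concretely, an operator $T \in \K(X_\gamma^{\otimes n})$ with $(\eta_1|T\eta_2)_A(y)=0$ for all $y\in F_n^I$ has all its ``inner-product matrix entries'' supported off $F_n^I$, i.e. lying in $J_n$; pushing this through $\varphi$ (the formula $\varphi(J) = \{b : (x|by)_A \in J\}$ recorded just above) shows $T \in \varphi(J_n) = I_n$, using that $\varphi$ and $\psi$ are mutually inverse lattice isomorphisms. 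The specialization to $n=1$ is then just the case $n=1$ of the two formulas.

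The main obstacle, and the step deserving care, is the reverse inclusion in part (2): one must invoke that the Rieffel correspondence is a genuine bijection (so that $I_n = \varphi(\psi(I_n)) = \varphi(J_n)$) and then identify $\varphi(J_n)$ with the set of operators whose inner products vanish on $F_n^I$, rather than merely on $J_n$-level. This identification is precisely the displayed characterization $\varphi(J) = \{b \in B \mid (x|by)_A \in J\ \text{for all } x,y \in X\}$ proved in the preamble, so the difficulty is bookkeeping rather than a new idea; the fullness of $X_\gamma^{\otimes n}$ guarantees the approximate-unit argument there applies. No additional structure from Assumption B is needed beyond what makes $X_\gamma^{\otimes n}$ a full $C(K)$-correspondence.
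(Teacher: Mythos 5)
Your proof is correct and follows essentially the same route as the paper, which offers no separate argument at all: the lemma is stated there as an immediate consequence (``By the discussion above'') of the recalled Rieffel-correspondence formulas $J_n=\psi(I_n)=\overline{\rm span}\{(\eta_1\,|\,T\eta_2)_A\}$ and $\varphi(J)=\{b:(x\,|\,by)_A\in J\}$, together with $\varphi(\psi(I_n))=I_n$, exactly the bookkeeping you carry out. The only cosmetic slip is attributing the approximate-unit argument to fullness of $X_\gamma^{\otimes n}$ over $A$; what is actually used there is that ${}_B(X\,|\,X)$ spans a dense ideal of $B=\K(X_\gamma^{\otimes n})$, which holds automatically by the definition of the compacts.
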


We find fibers $(\Pi^{(n)}(\K(X_{\gamma}^{\otimes n})))(y)$ on $y \in K$. 

\begin{cor}
 Let $y \in K$. If $y \notin F_I^n$, then the fiber 
$(\Pi^{(n)}(I \cap \K(X_{\gamma}^{\otimes n})))(y)$ on $y$ 
  coincides with the  full algebra  
$(\Pi^{(n)}(\K(X_{\gamma}^{\otimes n})))(y)$.
\end{cor}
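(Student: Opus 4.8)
The plan is to leverage the fiberwise simplicity of $D^{\gamma^n}$ together with the Rieffel-correspondence description of $F_I^n$. Recall from Lemma~\ref{lem:Morita} that $y \notin F_I^n$ means precisely that there exist $\eta_1,\eta_2 \in X_{\gamma}^{\otimes n}$ and $T \in I \cap \K(X_{\gamma}^{\otimes n})$ with $(\eta_1 \mid T \eta_2)_A(y) \neq 0$. Under the matrix representation $\Pi^{(n)}$, the algebra $\K(X_{\gamma}^{\otimes n}) \cong D^{\gamma^n}$, and each fiber $D^{\gamma^n}(y)$ is isomorphic to the full matrix algebra $M_{w_y}(\comp)$, which is simple. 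The strategy is to show that the fiber $(\Pi^{(n)}(I_n))(y)$ is a \emph{nonzero} two-sided ideal of this simple matrix algebra, hence must equal all of $(\Pi^{(n)}(\K(X_{\gamma}^{\otimes n})))(y) = D^{\gamma^n}(y)$.

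First I would verify that $(\Pi^{(n)}(I_n))(y)$ is genuinely an ideal of the fiber algebra $(\Pi^{(n)}(\K(X_{\gamma}^{\otimes n})))(y)$. This follows because $I_n = I \cap \K(X_{\gamma}^{\otimes n})$ is an ideal of $\K(X_{\gamma}^{\otimes n})$, the map $\Pi^{(n)}$ is an isometric $*$-homomorphism, and evaluation at $y$ is a $*$-homomorphism; the image of an ideal under a $*$-homomorphism is an ideal of the image, and restricting to a fiber preserves the ideal property since fiber evaluation is multiplicative. Thus $(\Pi^{(n)}(I_n))(y)$ is a closed two-sided ideal of the simple algebra $D^{\gamma^n}(y) \cong M_{w_y}(\comp)$.

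Next I would establish that this fiber ideal is nonzero whenever $y \notin F_I^n$. This is where the witness from Lemma~\ref{lem:Morita} enters: pick $T \in I_n$ with $(\eta_1 \mid T\eta_2)_A(y) \neq 0$. Under the identification of $\K(X_{\gamma}^{\otimes n})$ with $D^{\gamma^n} \subset M_{N^n}(A)$, the inner product $(\eta_1 \mid T \eta_2)_A(y)$ is computed from the matrix entries of $\Pi^{(n)}(T)(y)$ paired against the fiber vectors $\eta_1(y), \eta_2(y) \in Z_{\gamma^n}(y)$. Concretely, $(\eta_1 \mid T\eta_2)_A(y) = \langle \eta_1(y), \Pi^{(n)}(T)(y)\,\eta_2(y)\rangle$ in $\comp^{N^n}$, so this scalar being nonzero forces $\Pi^{(n)}(T)(y) \neq 0$. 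Hence $(\Pi^{(n)}(I_n))(y)$ contains the nonzero element $\Pi^{(n)}(T)(y)$.

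Combining these two facts with the simplicity of $M_{w_y}(\comp)$ yields the claim: a nonzero closed two-sided ideal of a simple \cst-algebra is the whole algebra, so $(\Pi^{(n)}(I_n))(y) = D^{\gamma^n}(y) = (\Pi^{(n)}(\K(X_{\gamma}^{\otimes n})))(y)$. The main obstacle I anticipate is the bookkeeping in the second step: one must carefully translate the $A$-valued inner product $(\eta_1 \mid T\eta_2)_A$ under the isomorphism $X_{\gamma}^{\otimes n} \cong Z_{\gamma^n}$ and the matrix representation $\Pi^{(n)}$ into an honest scalar pairing of fiber vectors, ensuring that the nonvanishing at $y$ survives the passage to the fiber. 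Since the isomorphisms of Lemma~\ref{lem:module} and the earlier propositions are all isometric and compatible with evaluation at points, this translation should be routine, but it requires attention to how the rank-one generators $\theta_{x,y}$ act fiberwise to confirm that the fiber vectors $\eta_i(y)$ indeed lie in $Z_{\gamma^n}(y)$ and pair correctly.
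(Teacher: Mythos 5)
Your proposal is correct and follows essentially the same route as the paper: the paper's proof is exactly the observation that the fiber $(\Pi^{(n)}(\K(X_{\gamma}^{\otimes n})))(y)$ is isomorphic to the simple algebra $M_{w_y}(\comp)$ while $(\Pi^{(n)}(I \cap \K(X_{\gamma}^{\otimes n})))(y)$ is nonzero when $y \notin F_I^n$. You merely spell out the details the paper treats as clear --- that fiber evaluation sends the ideal $I_n$ to an ideal of the fiber algebra, and that the Rieffel-correspondence witness $(\eta_1 \mid T\eta_2)_A(y) \neq 0$ forces $\Pi^{(n)}(T)(y) \neq 0$ --- and both verifications are sound.
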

\begin{proof}
It is clear from the facts that $(\Pi^{(n)}(\K(X_{\gamma}^{\otimes n})))(y)$ 
is isomorphic to $M_{w_y}({\mathbb C})$ and simple,  and 
 $(\Pi^{(n)}(I \cap \K(X_{\gamma}^{\otimes n})))(y)$ 
is non-zero since $y \notin F_I^n$. 
\end{proof}

\begin{lemma} \label{lem:ideal1}
Let $a \in K$. If $h(a)$ is in $F_{n+1}^I$, then $a$ is  in $F_n^I$.
\end{lemma}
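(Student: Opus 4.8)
The plan is to prove the contrapositive: assuming $a \notin F_n^I$, I would show $h(a) \notin F_{n+1}^I$. By Lemma \ref{lem:Morita}(1), the hypothesis $a \notin F_n^I$ means that there exist $T \in I_n = I \cap \K(X_{\gamma}^{\otimes n})$ and $\eta_1,\eta_2 \in X_{\gamma}^{\otimes n}$ with $b(a) \ne 0$, where $b := (\eta_1 \mid T\eta_2)_A \in A$. The goal is to manufacture from this data a genuinely compact operator $S \in I_{n+1}$ together with vectors in $X_{\gamma}^{\otimes(n+1)}$ whose inner product detects the point $h(a)$. First I would record the geometric fact that every point satisfies $a = \gamma_{j_0}(h(a))$ for some index $j_0$: since $K = \bigcup_j \gamma_j(K)$ we may write $a = \gamma_{j_0}(a')$, and then $a' = h(\gamma_{j_0}(a')) = h(a)$. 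Write $c := h(a)$.

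Next, using the identification $X_{\gamma}^{\otimes(n+1)} = X_{\gamma}^{\otimes n}\otimes_A X_{\gamma}$ and the inner-product formula for the inner tensor product, for any $f \in X_{\gamma}$ I would compute, with $b=(\eta_1\mid T\eta_2)_A$,
\[
(\eta_1 \otimes f \mid (T\otimes I)(\eta_2 \otimes f))_A(c)
= (f \mid \phi(b) f)_A(c)
= \sum_{j=1}^N \overline{f(\gamma_j(c),c)}\, b(\gamma_j(c))\, f(\gamma_j(c),c).
\]
The idea is to choose $f \in X_{\gamma}=\mathrm{C}(\C_{\gamma})$ so that this sum isolates the branches landing on $a$. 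The points $(\gamma_j(c),c)$ with $1 \le j \le N$ are finitely many points of $\C_{\gamma}$, so by Urysohn I may take $f$ equal to $1$ at every $(\gamma_j(c),c)$ with $\gamma_j(c)=a$ and equal to $0$ at the remaining ones. This handles the branch and non-branch cases uniformly: regardless of whether $a \in B_{\gamma}$, the surviving terms are exactly the $j$ with $\gamma_j(c)=a$, each contributing $b(a)$, so the sum equals $\#\{\,j : \gamma_j(c)=a\,\}\cdot b(a) \ne 0$.

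The main obstacle is that $T \otimes I$ is \emph{not} compact at level $n+1$, so it is not directly admissible in the description of $F_{n+1}^I$ coming from Lemma \ref{lem:Morita}. I would resolve this with an approximate identity. Note that $T \otimes I \in I$ (it is the element $T$ of the core written in the coordinates of $\F^{(n+1)}$, where $\F^{(n)}$ embeds by $S \mapsto S\otimes I$), while $\K(X_{\gamma}^{\otimes(n+1)})$ is an ideal of $\F^{(n+1)}$; hence for any approximate unit $(u_{\lambda})$ of $\K(X_{\gamma}^{\otimes(n+1)})$ the products $S_{\lambda} := (T\otimes I)u_{\lambda}$ lie in $I \cap \K(X_{\gamma}^{\otimes(n+1)}) = I_{n+1}$. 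Since $(u_{\lambda})$ acts nondegenerately on the module, $u_{\lambda}(\eta_2\otimes f) \to \eta_2\otimes f$, and $T\otimes I$ is bounded, so
\[
(\eta_1\otimes f \mid S_{\lambda}(\eta_2\otimes f))_A \longrightarrow (\eta_1\otimes f \mid (T\otimes I)(\eta_2\otimes f))_A
\]
in $A$, hence pointwise at $c$. For $\lambda$ large the value at $c = h(a)$ is therefore nonzero, and applying Lemma \ref{lem:Morita}(1) at level $n+1$ to $S_{\lambda} \in I_{n+1}$ yields $h(a) \notin F_{n+1}^I$. This is exactly the contrapositive of the assertion, so the lemma follows. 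The only delicate points are the uniform branch bookkeeping in the second paragraph and the compactification step via $(u_{\lambda})$; both are routine once the inner-product computation above is in hand.
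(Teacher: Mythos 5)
Your proof is correct and is essentially the contrapositive of the paper's own argument: both hinge on the same two ingredients, namely multiplying $T\otimes I$ by an element of the ideal $\K(X_{\gamma}^{\otimes n+1})$ of $\F^{(n+1)}$ so as to land in $I_{n+1}$, and then evaluating the inner tensor product identity $(\eta_1\otimes f\,|\,(T\otimes I)(\eta_2\otimes f))_A=(f\,|\,\phi((\eta_1|T\eta_2)_A)f)_A$ at the point $h(a)$ via Lemma \ref{lem:Morita}. Where the paper cuts by a rank-one operator $\theta_{\xi\otimes\eta,\xi'\otimes\eta'}$ and then chooses test vectors making one factor of the resulting product nonvanishing, you cut by an approximate unit of $\K(X_{\gamma}^{\otimes n+1})$ and choose a Urysohn function $f$ isolating the branches through $a$ --- cosmetic variants of the same mechanism, and all your supporting steps (nondegeneracy of the approximate unit on the module, $a=\gamma_{j_0}(h(a))$, the branch bookkeeping) are sound.
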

\begin{proof}
Assume that $h(a)$ is in $F_{n+1}^I$. 
Take an arbitrary $T \in \K(X_{\gamma}^{\otimes n}) \cap I$.
For any  $\xi$, $\xi' \in X_{\gamma}^{\otimes n}$,
  $\eta$,
  $\eta' \in X_{\gamma}$, we have
$(T\otimes I) \theta_{\xi \otimes \eta, \xi' \otimes \eta'} \in
  \K(X_{\gamma}^{\otimes n+1})
\cap I=I_{n+1}$.  Therefore for arbitrary $\omega$, $\omega' \in
  X_{\gamma}^{\otimes
  n}$, $\zeta$, $\zeta' \in X_{\gamma}$,   it holds that
\[
  (\omega \otimes \zeta | 
((T\otimes I) \theta_{\xi \otimes\eta,\xi' \otimes\eta'})
\omega' \otimes \zeta')_A(h(a))=0.
\]
Calculating the left hand, we have
\[
  (\omega \otimes \zeta| (T\xi) \otimes \eta (\xi' \otimes \eta'|\omega'
  \otimes \zeta')_A)_A(h(a))
  =  (\omega \otimes \zeta| (T\xi) \otimes \eta)_A(h(a)) (\xi' \otimes 
\eta'|\omega'
  \otimes \zeta')_A(h(a)).
\]
Since we can choose  $\xi'$, $\omega' \in X_{\gamma}^{\otimes n}$, $\eta'$,
$\xi' \in X_{\gamma}$ with $(\xi' \otimes \eta'|\omega' \otimes
  \zeta')_A(h(a)) \ne 0$, it holds that
\[
    (\omega \otimes \zeta| (T\xi) \otimes \eta)_A(h(a)) = 0.
\]
Thus it holds that 
\[
  (\zeta | (\omega | T\xi)_A \eta)_A (h(a)) = 0,
\]
for each $\zeta$, $\eta \in X_{\gamma}$.
Hence we have  that
\[
  (\omega|T\xi)_A(a)=0
\]
for each $\omega$, $\xi \in X_{\gamma}^{\otimes n}$.  This implies that 
  $a$ is in $F_n^I$.
\end{proof}

We note that the converse of Lemma \ref{lem:ideal1} does not hold in
general.

\begin{lemma} \label{lem:compact}
\cite{KW3} Let $f \in A = C(K)$. If $f|_{B_{\gamma}}=0$, 
then for any 
$T \in \K(X_{\gamma}^{\otimes n})$, we have that $(T  
  \phi_n(\alpha_n(f))\otimes I$ is contained in $\K(X_{\gamma}^{\otimes n+1})$ 
  \end{lemma}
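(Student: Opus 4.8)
The plan is to reduce the statement to the single fact that $\phi(f)$ is a finite-rank operator on $X_{\gamma}$, and then to propagate that compactness through the last tensor leg by a balancing computation.

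First I would reinterpret the hypothesis. By the Lemma of Kajiwara--Watatani recalled above, $J_X = \{\, g \in A \mid g(b)=0 \text{ for each } b \in B_{\gamma}\,\}$, so $f|_{B_{\gamma}}=0$ says precisely $f \in J_X$, i.e. $\phi(f) \in \K(X_{\gamma})$; by Lemma \ref{lem:finite-rank} this compact operator is in fact a finite sum of rank-one operators, say $\phi(f)=\sum_{k=1}^m \theta^{X_{\gamma}}_{\mu_k,\nu_k}$. Next I would identify $\phi_n(\alpha_n(f))$ with the \emph{right} action $R_f$ of $f$ on $X_{\gamma}^{\otimes n}$: writing $X_{\gamma}^{\otimes n}\simeq X_{\gamma^n}=C(\C_{\gamma^n})$, left multiplication by $\alpha_n(f)$ multiplies a section at $(\gamma_{\bf i}(y),y)$ by $\alpha_n(f)(\gamma_{\bf i}(y))=f(h^n(\gamma_{\bf i}(y)))=f(y)$, using $h^n\circ\gamma_{\bf i}=\mathrm{id}$ (a direct consequence of $h(\gamma_j(y))=y$ from Assumption B(1)); this is exactly multiplication by $f(y)$, i.e. the right action of $f$. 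Hence $T\,\phi_n(\alpha_n(f))=T R_f$, and for a rank one $T=\theta^{X_{\gamma}^{\otimes n}}_{\xi_1,\xi_2}$ one has $T R_f=\theta^{X_{\gamma}^{\otimes n}}_{\xi_1,\xi_2\overline{f}}$.

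The heart of the argument is then the tensor-balancing identity. For $\xi_1,\xi_2\in X_{\gamma}^{\otimes n}$ I would verify directly on elementary tensors $\zeta\otimes\eta\in X_{\gamma}^{\otimes n}\otimes X_{\gamma}$ that
\[
\theta^{X_{\gamma}^{\otimes n}}_{\xi_1,\xi_2\overline{f}}\otimes I
= \sum_{k=1}^m \theta^{X_{\gamma}^{\otimes (n+1)}}_{\xi_1\otimes\mu_k,\ \xi_2\otimes\nu_k},
\]
both sides sending $\zeta\otimes\eta$ to $\xi_1\otimes\phi(f)\phi((\xi_2|\zeta)_A)\eta$. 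On the left this uses $(\xi_2\overline{f}\,|\,\zeta)_A=f(\xi_2|\zeta)_A$ together with the balancing relation $\xi_1 b\otimes\eta=\xi_1\otimes\phi(b)\eta$ for $b\in A$, and on the right it uses $(\xi_2\otimes\nu_k\,|\,\zeta\otimes\eta)_A=(\nu_k\,|\,\phi((\xi_2|\zeta)_A)\eta)_A$ together with $\phi(f)=\sum_k\theta_{\mu_k,\nu_k}$. The right-hand side is a finite sum of rank-one operators on $X_{\gamma}^{\otimes(n+1)}$, hence lies in $\K(X_{\gamma}^{\otimes(n+1)})$.

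Finally I would pass from rank ones to a general $T\in\K(X_{\gamma}^{\otimes n})$: such $T$ is a norm limit of finite sums of $\theta_{\xi_1,\xi_2}$, the assignment $T\mapsto (T R_f)\otimes I$ is bounded and linear, and $\K(X_{\gamma}^{\otimes(n+1)})$ is norm closed, so $(T\,\phi_n(\alpha_n(f)))\otimes I\in\K(X_{\gamma}^{\otimes(n+1)})$ for every $T$. The step I expect to be the main obstacle is exactly the balancing identity, for it is here that the hypothesis $f|_{B_{\gamma}}=0$ is indispensable: only for $f\in J_X$ is $\phi(f)$ compact, and only then does $\theta_{\xi_1,\xi_2\overline{f}}\otimes I$ collapse to a genuinely compact operator on the larger module, whereas for a general $f$ the naive $\theta_{\xi_1,\xi_2}\otimes I$ is merely adjointable and not compact.
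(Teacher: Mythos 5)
Your proof is correct, and it opens exactly as the paper does: you translate $f|_{B_{\gamma}}=0$ into $f \in J_{X_{\gamma}}$ and absorb $f$ into the second leg of a rank-one operator via $\theta_{\xi,\eta}\,\phi_n(\alpha_n(f)) = \theta_{\xi,\eta\cdot f^*}$; your identification of $\phi_n(\alpha_n(f))$ with the right action of $f$, using $h^n\circ\gamma_{\bf i}=\mathrm{id}$, is precisely what makes the paper's first display valid, and the paper leaves it implicit. The routes diverge at the compactness step: the paper simply cites Fowler--Muhly--Raeburn \cite{FMR} for the equality $(\K(X_{\gamma}^{\otimes n})\otimes I)\cap \K(X_{\gamma}^{\otimes n+1}) = \K(X_{\gamma}^{\otimes n}J_{X_{\gamma}})\otimes I$ and concludes at once, whereas you prove the inclusion you need by hand: you decompose $\phi(f)=\sum_{k}\theta_{\mu_k,\nu_k}$ as a genuinely finite sum of rank ones (legitimate here by Lemma \ref{lem:finite-rank}, hence using Assumption B) and verify the balancing identity $\theta_{\xi_1,\xi_2\overline{f}}\otimes I = \sum_k \theta_{\xi_1\otimes\mu_k,\,\xi_2\otimes\nu_k}$ on elementary tensors --- the computation checks: both sides send $\zeta\otimes\eta$ to $\xi_1\otimes\phi(f)\phi((\xi_2|\zeta)_A)\eta$, and boundedness together with the density of the span of elementary tensors upgrades this to an operator identity. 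What your route buys is self-containedness and slightly more information: under Assumption B it exhibits $(T\,\phi_n(\alpha_n(f)))\otimes I$ as an explicitly finite-rank operator on $X_{\gamma}^{\otimes(n+1)}$, and by Lemma \ref{lem:finite-rank} your closing norm-limit step is not even needed, since every $T \in \K(X_{\gamma}^{\otimes n})$ is already a finite sum of rank ones. What the paper's citation buys is brevity and generality: the \cite{FMR} identity requires no finite-rank phenomenon (only $f \in J_{X_{\gamma}}$), and facts of this kind from \cite{FMR} --- including intersections such as $\F^{(n)}\cap\K(X_{\gamma}^{\otimes n+1}) = \K(X_{\gamma}^{\otimes n})\cap\K(X_{\gamma}^{\otimes n+1})$, which your one-sided inclusion does not deliver --- are reused elsewhere in the paper (Lemmas \ref{lem:intersection} and \ref{lem:trace}), so quoting them here costs nothing.
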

\begin{proof} Since $f|_{B_{\gamma}}=0$, we have that $f \in J_{X_{\gamma}}$.
For  $\xi$, $\eta \in X_{\gamma}^{\otimes n}$, we
  have
\[
\theta^{X_{\gamma}^{\otimes n}}_{\xi,\eta}\phi_n(\alpha_n(f))
 = \theta^{X_{\gamma}^{\otimes n}}_{\xi,\phi_n(\alpha_n(f)^*)\eta}
 =  \theta^{X_{\gamma}^{\otimes n}}_{\xi,\eta \cdot f^*}.
\]
Since  $(K(X_{\gamma}^{\otimes n})\otimes I)  \cap
  \K(X_{\gamma}^{\otimes n+1})
  = \K(X_{\gamma}^{\otimes n}J_{X_{\gamma}}) \otimes I$ \cite{FMR}, 
the lemma is proved. 

\end{proof}

Even if $a$ is not in  $B_{\gamma}$, $h(a)$ may be in $C_{\gamma}$. 
Therefore we need the following careful analysis. 
 
\begin{lemma} \label{lem:converse}
Let $a$ be in $K$. We assume that $a \notin B_{\gamma}$. 
If $a$ is in $ F_n^I$, then 
$h(a)$ is in $F_{n+1}^I$.
\end{lemma}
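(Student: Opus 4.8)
The plan is to pass everything through the matrix representation and to reduce the statement to a claim about a single fiber. First I would record the dictionary: for any level $m$ and any point $y$, one has $y\in F_m^I$ if and only if the fiber $\Omega^{(m)}(I\cap\K(X_{\gamma}^{\otimes m}))(y)$ vanishes. This follows from Lemma \ref{lem:Morita} together with the identity $(\eta_1|T\eta_2)_A(y)=\langle\eta_1(y),\Omega^{(m)}(T)(y)\eta_2(y)\rangle$, since the vectors $\eta_i(y)$ exhaust $Z_{\gamma^m}(y)$ and $\Omega^{(m)}(T)(y)$ acts on that fiber. Because evaluation at $y$ is a surjective $*$-homomorphism of $D^{\gamma^m}$ onto the simple algebra $D^{\gamma^m}(y)\cong M_{w_y}(\comp)$, the fiber $\Omega^{(m)}(I\cap\K(X_{\gamma}^{\otimes m}))(y)$ is an ideal of $M_{w_y}(\comp)$, hence is either $0$ or the whole algebra. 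Thus proving $h(a)\in F_{n+1}^I$ is the same as ruling out that the fiber at $c:=h(a)$ is the full matrix algebra.

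Second, I would use $a\notin B_{\gamma}$ to split the fiber at $c$. Since $a=\gamma_{j_0}(c)$ and $a\notin B_{\gamma}$, the index $j_0$ with $\gamma_{j_0}(c)=a$ is unique and $\gamma_i(c)\neq a$ for $i\neq j_0$. Singling out the inner coordinate of a multi-index in $\Sigma^{n+1}$ and applying Lemma \ref{lem:branch} for $\gamma^{n+1}$ (any coincidence $\gamma_{\mathbf i}(c)=\gamma_{\mathbf j}(c)$ differs in exactly one coordinate) together with injectivity of the contractions, the values $\gamma_{\mathbf p}(a)$ coming from inner coordinate $j_0$ never coincide with values coming from inner coordinate $i\neq j_0$. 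Hence the inner-$j_0$ sector is a union of connected components of the identification pattern defining $Z_{\gamma^{n+1}}(c)$, and the placement $\delta_{\mathbf p}\mapsto\delta_{(\mathbf p,j_0)}$ gives an isometric embedding $\iota:Z_{\gamma^n}(a)\hookrightarrow Z_{\gamma^{n+1}}(c)$ onto that sector. Put $C_S:=\iota^{*}\,\Omega^{(n+1)}(S)(c)\,\iota\in L(Z_{\gamma^n}(a))=D^{\gamma^n}(a)$. The block-diagonal formula $\Omega^{(n+1,n)}(R)(c)=\diag(\Omega^{(n)}(R)(\gamma_i(c)))_i$ shows that $(R\otimes I)S$ and $S(R\otimes I)$ lie in $I\cap\K(X_{\gamma}^{\otimes n+1})$ for $R\in\K(X_{\gamma}^{\otimes n})$ (adjointable operators are multipliers of the compacts and so preserve the ideal) and that $C_{(R\otimes I)S}=\Omega^{(n)}(R)(a)\,C_S$; as $\Omega^{(n)}(R)(a)$ ranges over all of $D^{\gamma^n}(a)$, the set $\{C_S:S\in I\cap\K(X_{\gamma}^{\otimes n+1})\}$ is a two-sided ideal of the simple algebra $D^{\gamma^n}(a)$.

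Third comes the step I expect to be the main obstacle: showing $\{C_S\}\subseteq\Omega^{(n)}(I\cap\K(X_{\gamma}^{\otimes n}))(a)$, i.e. that each compression is realized at $a$ by an element of the ideal at level $n$. The difficulty is that dropping from level $n+1$ to level $n$ means removing the inner tensor leg, an operation implemented by the creation operators $S_e$ of $\O_{\gamma}$, which do not lie in the core and so need not preserve the ideal $I$ of the core. This is precisely where $a\notin B_{\gamma}$ enters: since $B_{\gamma}$ is finite and $\gamma_i(c)\neq a$ for $i\neq j_0$, I can pick $g\in A$ with $g(a)=1$, $0\le g\le1$, $g|_{B_{\gamma}}=0$ (so $g\in J_X$) and $\supp g$ disjoint from $B_{\gamma}$ and from the sibling preimages $\{\gamma_i(c):i\neq j_0\}$. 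Cutting $S$ down by $g$ localizes it into the inner-$j_0$ sector, where $a\notin B_{\gamma}$ forces the inner leg to behave as a trivial rank-one leg; via Lemma \ref{lem:compact} and the identity $(\K(X_{\gamma}^{\otimes n})\otimes I)\cap\K(X_{\gamma}^{\otimes n+1})=\K(X_{\gamma}^{\otimes n}J_X)\otimes I$ of \cite{FMR}, the localized operator is identified with an element $T\otimes I$, $T\in\K(X_{\gamma}^{\otimes n}J_X)$, with $\Omega^{(n)}(T)(a)=C_S$. Since $T$ and $T\otimes I$ denote the same element of the core, $T\otimes I\in I$ gives $T\in I\cap\K(X_{\gamma}^{\otimes n})$, whence $C_S\in\Omega^{(n)}(I\cap\K(X_{\gamma}^{\otimes n}))(a)$.

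Finally I would assemble the contradiction. By hypothesis $a\in F_n^I$, so $\Omega^{(n)}(I\cap\K(X_{\gamma}^{\otimes n}))(a)=0$ by the fiber characterization; hence every $C_S=0$ and the ideal $\{C_S\}$ of $D^{\gamma^n}(a)$ is zero. If $h(a)\notin F_{n+1}^I$ then, by simplicity, the fiber $\Omega^{(n+1)}(I\cap\K(X_{\gamma}^{\otimes n+1}))(c)$ would be the full matrix algebra, and its compression to the $\iota$-sector would be all of $L(Z_{\gamma^n}(a))\neq0$, contradicting $\{C_S\}=0$. Therefore $\Omega^{(n+1)}(I\cap\K(X_{\gamma}^{\otimes n+1}))(c)=0$, that is $h(a)\in F_{n+1}^I$, as asserted.
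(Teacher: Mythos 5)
Your steps 1, 2 and 4 are sound and are close to the paper's own fiberwise viewpoint (the dictionary you record is exactly Lemma \ref{lem:Morita} and its corollary, and the paper also argues by contradiction at the fiber over $c=h(a)$). The genuine gap is your step 3, and the tools you cite there do not close it. Multiplying $S\in I\cap\K(X_{\gamma}^{\otimes n+1})$ by a scalar function $g\in A$ (on either side, through any of the module actions) multiplies the matrix function $\Omega^{(n+1)}(S)$ pointwise by scalars of the form $g(\gamma_{\bf i}(x))$ or $g(x)$; it cannot isolate the inner-$j_0$ block, so ``cutting $S$ down by $g$'' does not localize $S$ into your sector. Isolating the block would require the non-scalar multiplier $g\cdot E_{j_0}$, where $E_{j_0}$ is the constant block projection onto the inner-$j_0$ coordinates, and you would then still have to verify: that this multiplier preserves $Z_{\gamma^{n+1}}$ (true only with $\supp g$ in a small neighborhood of $c$ avoiding cross-sector identifications, which is where $a\notin B_{\gamma}$ and finiteness of $B_{\gamma}$ enter); that the cut-down operator is literally of the global covariant form $\diag(\Omega^{(n)}(T)(\gamma_i(x)))_{i}$ for a single $T$ satisfying the identification equations of $D^{\gamma^n}$ --- this needs the overlap fact $\gamma_i(K)\cap\gamma_{j_0}(K)\subset B_{\gamma}$ (a consequence of Assumption B(1)), since otherwise $T\otimes I$ acquires nonzero off-sector blocks at far-away points where the localized $S$ vanishes; and that ideals of $\K(X_{\gamma}^{\otimes n+1})$ are invariant under such adjointable multipliers. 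The two results you actually invoke prove the opposite implication: Lemma \ref{lem:compact} and the FMR identity $(\K(X_{\gamma}^{\otimes n})\otimes I)\cap\K(X_{\gamma}^{\otimes n+1})=\K(X_{\gamma}^{\otimes n}J_X)\otimes I$ push level-$n$ compacts \emph{up} into level $n+1$; they do not exhibit a localized level-$(n+1)$ operator as some $T\otimes I$. So your central claim $C_S\in\Omega^{(n)}(I\cap\K(X_{\gamma}^{\otimes n}))(a)$ is asserted rather than proved.

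The paper closes exactly this point by running the construction in the other direction. Assuming $b=h(a)\notin F_{n+1}^I$, it uses that $F_{n+1}^I$ is \emph{closed} (and $C_{\gamma}$ finite) to choose an open $U(b)$ with $\overline{U(b)}\cap F_{n+1}^I=\emptyset$, on which all level-$(n+1)$ fibers of the ideal are full; it then builds from scratch an $S'\in\K(X_{\gamma}^{\otimes n})$ with $\Pi^{(n)}(S')=Tf$, where $f$ is a bump at $a$ supported off $B_{\gamma}$ and off the sibling preimages $\gamma_j(b)$, $j\neq 1$; Lemma \ref{lem:compact} puts $S'$ into $\K(X_{\gamma}^{\otimes n+1})$, the support of $\Pi^{(n+1)}(S')$ lies in $\overline{U(b)}$, and the characterization in Lemma \ref{lem:Morita}(2) then yields $S'\in I$, contradicting $a\in F_n^I$ because $\Pi^{(n)}(S')(a)=T\neq 0$. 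A telltale sign of the gap in your write-up is that you never use closedness of $F_{n+1}^I$ nor choose any support relative to it: fullness of the single fiber at $c$ does not by itself deliver an element of $I$ of the special form $T\otimes I$ with $T$ at level $n$ --- you need either the multiplier construction sketched above carried out in full, or the paper's neighborhood-support argument.
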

\begin{proof}
Let $a \notin B_{\gamma}$ and $a \in F_{n}^I$.  Put $b=h(a)$.
On the contrary, suppose that $b \notin F_I^{n+1}$.  
By changing the number of $\gamma_j$, 
we may assume $a=\gamma_1(b)$.  Because  $a \notin B_{\gamma}$, 
$a=\gamma_j(b)$ if and only if $j=1$.  Since $b \notin F_{I}^{n+1}$ and 
$F_{I}^{n+1}$ is closed, there exists an open neighborhood $U(b)$ of $b$ 
such that $\overline{U(b)}\cap F_{n+1}^I = \emptyset$ and any $x \in U(b)$ 
with $x \not= b$ is not in $C_{\gamma}$. (But $b$ may be in $C_{\gamma}$.) 
Therefore  
for any $x \in U(b)$, 
$\Pi^{(n+1)}(\K(X_{\gamma}^{\otimes n+1})\cap I)(x) \not= 0$ 
and it coincides with the total algebra
  $\Pi^{(n+1)}(\K(X_{\gamma}^{\otimes n+1}))(x)$, because it 
is simple.  By the form of the
  representation $\Pi^{(n+1)}$ of $\K(X_{\gamma}^{\otimes n+1})$, for any 
$T \in M_{N^n}(\comp)$, the element
\[
  \begin{bmatrix}
   T & O \\ 
   O & O
  \end{bmatrix}
\]
is contained in 
\[
\Pi^{(n+1)}(\K(X_{\gamma}^{\otimes n+1}))(b) 
= \Pi^{(n+1)}(\K(X_{\gamma}^{\otimes n+1}) \cap I)(b). 
\]
Moreover, if $T' \in {\rm C}(K,M_{N^{n+1}}(\comp)) \simeq
  M_{N+1}(A)$ satisfies that 
\[
  T'(b) =
  \begin{bmatrix}
   T & O \\ 
   O & O
  \end{bmatrix}
\]
and $T'(x)$ is $0$ for $x \notin \overline{U(b)}$, then 
$T'$ is contained in $\Pi^{(n+1)}(\K(X_{\gamma}^{n+1})\cap I)$.
We choose and fix $T \ne O$ with $T \in \Pi^{(n)}(\K(X_{\gamma}^{\otimes n}))$.
Since $\gamma_1$ is continuous and $a \notin  B_{\gamma}$, 
there exists a open neighborhood $V(a)$ of $a$ such that $V(a) \subset
  \gamma_1(U(b))$, $V(a) \cap B_{\gamma} =\emptyset$, and $V(a) \cap
  C_{\gamma}$ does not contain any element except for $a$.
We take $f \in {\rm C}(K)$ such that  $f(a)=1$ and $f(x)=0$ outside
  $\overline{V(a)}$.  We put $S(x)_{ij}=T_{ij}f(x)$.  Then it holds that
$S \in \Pi^{(n)}(\K(X_{\gamma}^{\otimes n}))$.
We express as $S=\Pi^{(n)}(S')$, $S' \in
\K(X_{\gamma}^{\otimes n})$.  By the choice of $f$, it holds that $S' \in
\K(X_{\gamma}^{\otimes n+1})$.
Since $\gamma_1(b)=a$ and $\gamma_j(b) \ne a$ for $j \ne 1$, we have
\[
  \Pi^{(n+1)}(S')(c) = 
  \begin{bmatrix}
   S(b) & O \\ 
   O & O
  \end{bmatrix}  \\
  = 
  \begin{bmatrix}
   T & O \\ 
   O & O.
  \end{bmatrix}
\]
Moreover, since  $\Pi^{(n+1)}(S')(x)$ is $0$ outside $\overline{U(b)}$,
it holds that $\Pi^{(n+1)}(S') \in \Pi^{(n+1)}(\K(X_{\gamma}^{\otimes
  n+1})\cap I)$.  Thus  we find $S' \in \K(X_{\gamma}^{\otimes n}) \cap
  I$ such that  $\Pi^{(n)}(S')(a)=T \ne O$. It implies that  $a \notin
  F_{n}^I$. But this is a contradiction. 
\end{proof}

\begin{lemma} \label{lem:F0}
Let $a$ and $b$ be in $K$. Assume that $a$ is in $F_0^I$ and $a \notin Orb$. 
If there exists a positive integer $n$ with $h^n(a)=h^n(b)$, 
then $b$ is also contained in $F_0^I$. 

\end{lemma}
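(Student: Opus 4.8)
The plan is to transport the hypothesis through the map $h$: I would first push the membership $a \in F_0^I$ forward along $h$ all the way up to level $n$, obtaining $h^n(a) \in F_n^I$, and then, since $h^n(b)=h^n(a)$, pull this same membership back down along $h$ to reach $b \in F_0^I$. The two tools are Lemma \ref{lem:converse}, which carries $a \in F_m^I$ to $h(a) \in F_{m+1}^I$ as long as $a \notin B_{\gamma}$, for the ascent, and Lemma \ref{lem:ideal1}, which carries $h(x) \in F_{m+1}^I$ back to $x \in F_m^I$ with no restriction on $x$, for the descent.

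The first step is a preliminary observation: the hypothesis $a \notin Orb$ forces $h^k(a) \notin B_{\gamma}$ for every $k \ge 0$. Indeed, for any $x \in K$, since $K = \bigcup_{i} \gamma_i(K)$ and $h(\gamma_i(y))=y$, there is an index $i$ with $x = \gamma_i(h(x))$; iterating this identity $k$ times gives $a = \gamma_{i_1}\circ \cdots \circ \gamma_{i_k}(h^k(a))$ for suitable $i_1,\dots,i_k \in \Sigma$. Consequently, if $h^k(a)$ happened to be a branch point $b' \in B_{\gamma}$, then $a \in O_{b',k} \subset O_{b'} \subset Orb$, contradicting $a \notin Orb$. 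In particular $a = h^0(a) \notin B_{\gamma}$.

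With this in hand I would iterate Lemma \ref{lem:converse}. Starting from $a \in F_0^I$ and using $a \notin B_{\gamma}$ gives $h(a) \in F_1^I$; then $h(a) \notin B_{\gamma}$ gives $h^2(a) \in F_2^I$; and after $n$ steps, $h^n(a) \in F_n^I$. Each step is licensed exactly because the relevant iterate $h^k(a)$ $(0 \le k \le n-1)$ avoids $B_{\gamma}$, which is precisely the content of the preliminary observation.

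Finally, since $h^n(b) = h^n(a) \in F_n^I$, I would descend with Lemma \ref{lem:ideal1}, which needs no hypothesis on the point. Applying it with $x = h^{n-1}(b)$ yields $h^{n-1}(b) \in F_{n-1}^I$ from $h(h^{n-1}(b))=h^n(b) \in F_n^I$; repeating this $n$ times brings us to $b = h^0(b) \in F_0^I$, as required. The hard part is the ascent rather than the descent: the descent is automatic, so the whole argument rests on guaranteeing that no forward iterate $h^k(a)$ meets $B_{\gamma}$, which is exactly why the hypothesis $a \notin Orb$ is imposed.
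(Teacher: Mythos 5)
Your proof is correct and takes essentially the same route as the paper's: the paper likewise uses $a \notin Orb$ to conclude $h^k(a) \notin B_{\gamma}$ for all $k$, ascends by iterating Lemma \ref{lem:converse} to get $h^n(a) \in F_n^I$, and then descends via Lemma \ref{lem:ideal1} to obtain $b \in F_0^I$. The only difference is that you spell out the details the paper leaves implicit, namely the verification that $a \notin Orb$ forces $h^k(a) \notin B_{\gamma}$ (including the needed case $k=0$) and the explicit $n$-fold iteration of the descent.
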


\begin{proof}
Since $a \notin Orb$, $h^n(a)$ is not contained in $B_{\gamma}$ for every
positive integer $n$. Therefore  $h^n(a)\in F_{n}^I$ for every
positive integer $n$ by Lemma \ref{lem:converse}. Since 
  $h^{n}(b)=h^{n}(a)$, it holds that $b \in F_0^I$ by Lemma
  \ref{lem:ideal1}. 
\end{proof}

\begin{lemma} \label{lem:dense}
Let $\gamma$ be a self-similar map on $K$ and $a \in K$. 
Then the set 
\[
C(a):= \{b \in K \ | \  h^n(b) =
  h^n(a) \ { \text for \  some } \  n=0,1,2,3,\dots \} 
= \cup_n  \cup_{{\bf j}\in \Sigma^n} \gamma_{\bf j}(h^n(a))
\]  
is dense in $K$.
\end{lemma}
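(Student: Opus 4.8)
The plan is to prove the asserted set equality first and then derive density from the uniform contractivity of $\gamma$. The equality is what connects the dynamically defined set $C(a)$ to the concrete union of cylinder images, which is the object one can actually control metrically.

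First I would establish $C(a) = \bigcup_n \bigcup_{{\bf j}\in \Sigma^n} \gamma_{\bf j}(h^n(a))$. The inclusion $\supseteq$ is immediate: by Assumption B(1) we have $h(\gamma_j(y)) = y$ for every $j$, so applying $h$ repeatedly gives $h^n(\gamma_{\bf j}(x)) = x$ for every ${\bf j} = (i_1,\dots,i_n) \in \Sigma^n$ and every $x \in K$; taking $x = h^n(a)$ shows $\gamma_{\bf j}(h^n(a))$ satisfies $h^n(\gamma_{\bf j}(h^n(a))) = h^n(a)$, hence lies in $C(a)$. For the reverse inclusion I would first check that $h^{-1}(c) = \{\gamma_j(c) \mid j \in \Sigma\}$ for every $c \in K$: if $h(b) = c$, the self-similarity relation $K = \bigcup_j \gamma_j(K)$ provides $j$ and $y$ with $b = \gamma_j(y)$, and then $c = h(b) = h(\gamma_j(y)) = y$, so $b = \gamma_j(c)$. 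Iterating this $n$ times yields $(h^n)^{-1}(c) = \{\gamma_{\bf j}(c) \mid {\bf j} \in \Sigma^n\}$, and specializing to $c = h^n(a)$ gives $\subseteq$.

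Next I would prove density using the contraction estimate. Each $\gamma_i$ is a proper contraction, so there is $c_i < 1$ with $d(\gamma_i(x),\gamma_i(y)) \le c_i\,d(x,y)$; since $\Sigma$ is finite, $c := \max_i c_i < 1$. Consequently $\gamma_{\bf j}$ contracts distances by at most $c^n$ for ${\bf j} \in \Sigma^n$, so $\mathrm{diam}(\gamma_{\bf j}(K)) \le c^n\,\mathrm{diam}(K)$. Iterating the covering relation gives $K = \bigcup_{{\bf j}\in \Sigma^n} \gamma_{\bf j}(K)$ for every $n$. Now fix $x \in K$ and $\varepsilon > 0$, choose $n$ with $c^n\,\mathrm{diam}(K) < \varepsilon$, pick ${\bf j} \in \Sigma^n$ with $x \in \gamma_{\bf j}(K)$, and observe that $\gamma_{\bf j}(h^n(a)) \in \gamma_{\bf j}(K) \cap C(a)$; then
\[
d\bigl(x,\gamma_{\bf j}(h^n(a))\bigr) \le \mathrm{diam}(\gamma_{\bf j}(K)) \le c^n\,\mathrm{diam}(K) < \varepsilon.
\]
Thus $C(a)$ meets every open ball, so it is dense in $K$.

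The only step requiring genuine care is the reverse set inclusion: one must know that \emph{every} $h^n$-preimage of a point is exactly of the form $\gamma_{\bf j}(h^n(a))$, and this rests on combining the left-inverse identity $h \circ \gamma_j = \mathrm{id}$ with the surjectivity of the family $(\gamma_j)$ onto $K$ coming from $K = \bigcup_j \gamma_j(K)$. Once that identification is in hand, the density half is routine and is driven entirely by the uniform bound $c < 1$, which forces the cylinder sets $\gamma_{\bf j}(K)$ to shrink to points while still covering $K$.
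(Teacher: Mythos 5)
Your proof is correct and takes essentially the same route as the paper: cover $K$ by the sets $\gamma_{\bf j}(K)$, ${\bf j}\in\Sigma^n$, whose diameters are at most $c^n\,\mathrm{diam}(K)<\varepsilon$, and note that each contains the point $\gamma_{\bf j}(h^n(a))\in C(a)$ since $h^n\circ\gamma_{\bf j}=\mathrm{id}$. The only difference is that you also prove the stated set equality in both directions (the reverse inclusion via $h\circ\gamma_j=\mathrm{id}$ combined with $K=\bigcup_j\gamma_j(K)$), a detail the paper asserts in the statement but never verifies --- its proof uses only the easy inclusion $\gamma_{\bf j}(h^n(a))\in C(a)$, which is all that density requires --- so your version is slightly more complete but not a different method.
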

\begin{proof}
Since  $\gamma$ is a self-similar map on $K$. 
There exists 
a positive constant $0 < c < 1$  such that for any $j \in \Sigma$ 
$d(\gamma_j(x),\gamma_j(y)) \le cd(x,y)$ 
for any  $x$, $y \in K$. 
Let  $M>0$ be the diameter of $K$. Take  $a \in K$. 
For any $\varepsilon>0$, 
choose  $n$ such that $Mc^n<\varepsilon$.  We put $h^n(a)=d$.
Since  $\gamma$ is a self-similar map, 
$K = \bigcup_{i=1}^N \gamma_i(K)$. Iterating the operations $n$-times, 
we have that 
\[K = \bigcup_{{\bf j}\in \Sigma^n} \gamma_{\bf j}(K)
\]
Then the diameter of $\gamma_{\bf j}(K)$ is less than $\varepsilon$.  
Each subset $\gamma_{\bf j}(K)$ contains 
$b=\gamma_{\bf j}(d)$ and $b$ is in $C(a)$, because $h^n(b) = d.$  
Hence for any $z \in K$ and for any $\varepsilon>0$, there exists 
an element $b \in C(a)$ such that $d(b,z) < \varepsilon$. 
Therefore $C(a)$ is dense in $K$
\end{proof}

The above lemma also implies the following: 
Let $\gamma$ be a self-similar map on $K$. 
Then $K$ does not have any isolated points. In fact, 
for $a,b \in K$, let $b = h(a)$ and $a = \gamma_i(b)$. 
We shall show  that $b$ is an isolated point if and only if 
$a$ is also an isolated point. let $b$ be an isolated point and  
$U_b$ an open neighbourhood of $b$  such that $U_b = \{b\}$. 
Then $h^{-1}(U_b) = h^{-1}(b)$ is an open finite set containing $a$. 
Hence there exists an open  neighbourhood $V_a$ of $a$ such that 
$V_a = \{a\}$. Hence $a$ is an isolated point. The converse also holds. 
On the contrary assume that $K$ has an isolated point $z$. Then 
any point in the dense set $C(z)$ is an isolated point of $K$. 
This causes a contradiction.

\begin{thm}
Let $\gamma$ be a self-similar map on $K$ and satisfies Assumption B. 
Let $\F^{(\infty)}$ be the core of the 
\cst-algebra $\O_{\gamma}$ associated with the self-similar map $\gamma$.  
If $\gamma$ has no branch point, then the core $\F^{(\infty)}$ is simple. 
\end{thm}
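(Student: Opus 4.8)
The plan is to exploit the fact that the absence of branch points trivializes every constraint subalgebra, turning the core into a transparent inductive limit, and then to run the machinery of the closed sets $F_n^I$ developed in Lemmas \ref{lem:ideal1}, \ref{lem:converse}, \ref{lem:F0} and \ref{lem:dense}. First I would record the consequences of $B_{\gamma}=\emptyset$. By Lemma \ref{lem:branch} one gets $B_{\gamma^n}=\emptyset$, hence $C_{\gamma^n}=\emptyset$, for every $n$, so there are no identification equations and $D^{\gamma^n}=M_{N^n}(A)=C(K,M_{N^n}(\comp))$. Consequently $\K(X_{\gamma}^{\otimes n})\cong M_{N^n}(A)$, and since under the injective matrix representation $\Pi^{(n)}$ the top summand already fills $M_{N^n}(A)$, in fact $\F^{(n)}=\K(X_{\gamma}^{\otimes n})\cong C(K,M_{N^n}(\comp))$. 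Moreover $Orb=\emptyset$, so the hypotheses ``$a\notin B_{\gamma}$'' and ``$a\notin Orb$'' appearing in Lemmas \ref{lem:converse} and \ref{lem:F0} are vacuously satisfied for every point. I would also note that $h$ is surjective, since $h(\gamma_j(y))=y$ and $K=\bigcup_j\gamma_j(K)$, and hence each $h^n$ is surjective.

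Next, let $I$ be a nonzero ideal of $\F^{(\infty)}$; the goal is to show $1\in I$. Because $\F^{(\infty)}=\overline{\bigcup_n\F^{(n)}}$ with $\F^{(n)}=\K(X_{\gamma}^{\otimes n})$, the standard inductive-limit fact gives $I=\overline{\bigcup_n I_n}$ with $I_n=I\cap\K(X_{\gamma}^{\otimes n})$, so $I_n\neq 0$ for some $n$. Through the Rieffel correspondence of Lemma \ref{lem:Morita} this means $J_n\neq 0$, i.e. $F_n^I\neq K$, and I pick a point $y_0\in K\setminus F_n^I$. Combining the two directions, namely Lemma \ref{lem:converse} (applicable since no point lies in $B_{\gamma}$) and Lemma \ref{lem:ideal1}, yields the equivalence $a\in F_0^I$ if and only if $h^n(a)\in F_n^I$. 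Choosing $a$ with $h^n(a)=y_0$, which is possible by surjectivity of $h^n$, shows $a\notin F_0^I$, so $F_0^I\neq K$.

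It then remains to promote ``$F_0^I\neq K$'' to ``$F_0^I=\emptyset$''. Here I would invoke the dichotomy coming from grand-orbit invariance: by Lemma \ref{lem:F0} (with $Orb=\emptyset$) the set $F_0^I$ contains $C(a)$ whenever it contains $a$, and $C(a)$ is dense by Lemma \ref{lem:dense}; since $F_0^I$ is closed, this forces $F_0^I\in\{\emptyset,K\}$. Having just excluded $K$, I conclude $F_0^I=\emptyset$, so that $J_0=I\cap A=A$ and in particular $1\in I$, whence $I=\F^{(\infty)}$. This proves simplicity.

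Since every ingredient is furnished by the preceding lemmas, the argument is essentially a matter of assembly, and I expect the only genuinely delicate points to be the bookkeeping that makes a nonzero ideal visible at a finite stage and the precise translation ``$I_n\neq 0$ if and only if $F_n^I\neq K$'' through the Morita correspondence of Lemma \ref{lem:Morita}. The conceptual heart is simply that with no branch points the dynamics of $h$ has dense grand orbits, which is exactly what Lemma \ref{lem:dense} encodes, and this is what collapses the lattice of invariant closed sets to $\{\emptyset,K\}$.
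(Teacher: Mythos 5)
Your proof is correct and runs on exactly the machinery the paper's own proof uses: the filtration $I_n = I \cap \K(X_{\gamma}^{\otimes n})$ (with $\F^{(n)} = \K(X_{\gamma}^{\otimes n})$ in the branch-free case), the Rieffel-correspondence sets $F_n^I$ of Lemma \ref{lem:Morita}, and the dichotomy that a closed subset saturated under grand orbits must be $\emptyset$ or $K$ (Lemmas \ref{lem:F0} and \ref{lem:dense}). The only divergence is the logical packaging: the paper assumes $1 \notin I$ and shows each $F_n^I \neq \emptyset$, hence $F_n^I = K$ and $I_n = 0$ for every $n$, whereas you run the contrapositive, taking $I \neq 0$, locating one level with $F_n^I \neq K$, transferring this down to $F_0^I \neq K$ via surjectivity of $h^n$ together with Lemmas \ref{lem:ideal1} and \ref{lem:converse}, and concluding $F_0^I = \emptyset$, i.e. $1 \in I \cap C(K)$ --- an equivalent assembly of the same lemmas (indeed the paper's application of Lemma \ref{lem:F0} to $F_n^I$ tacitly uses the same shifted transfer argument that your version makes explicit).
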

\begin{proof}
Since $\gamma$ has no branch point, the associated Hilbert module 
$X_{\gamma}^{\otimes n}$ is finitely generated projective 
module by \cite{KW1}. 
Thus $\F^{(n)} = \K(X_{\gamma}^{\otimes n})= \L(X_{\gamma}^{\otimes n})$ 
for $n \in {\mathbb N}$  and 
we have increasing of subalgebras 
\[
\F^{(0)} = A \subset \F^{(1)} = \K(X_{\gamma}) \subset \F^{(2)} 
 = \K(X_{\gamma}^{\otimes 2})
... \subset \F^{(n)} = \K(X_{\gamma}^{\otimes n}) \subset ...
\]
Let $I$ be an ideal of the core $\F^{(\infty)}$.  Assume that 
$I \not= \F^{(\infty)}$. We shall show that $I = 0$. 
Let $I_n = I \cap  \K(X_{\gamma}^{\otimes n})$ and 
Let $J_n = \psi (I_n)$ be the corresponding ideal of $A = C(K)$ by the  
Riefell correspondence. Let $F_n^I$ be the corresponding closed subset 
of $K$. Since $I$ does not contain the unit, each $J_n$ does not 
contain the unit and $F_n^I  \not= \emptyset$. For any fixed $n$, 
take $a_n \in F_n^I$. By  Lemma \ref{lem:dense}, 
the set $C(a):= \{b \in K \ | \  h^n(b) =
  h^n(a) \ \text{ for some }  n=0,1,2,3,\dots \}$   
is dense in $K$.  Moreover Lemma \ref{lem:F0} shows that 
$C(a_n)$ is also in $F_n^I$, because $h^k(a_n)$ is not a branch point
for any $k$. Since $F_n^l$ is closed, we have that $F_n^l = K$. 
This means that $J_n = \psi (I_n) = 0$  and $I_n = 0.$ Therefore 
\[
I  = \overline{\cup_n  (I \cap  \F^{(n)})} 
= \overline{\cup_n  (I \cap  \K(X_{\gamma}^{\otimes n}))} 
  = \overline{\cup_n I_n} =0.
\]
Thus the core $\F^{(\infty)}$ is simple. 

\end{proof}

\begin{exam}(Cantor set)
Let $\Omega=[0,1]$, $\gamma_1(y) = (1/3)y$ and $\gamma_2(y)
= (1/3)y+ (2/3)$.
Then a family $\gamma = (\gamma_1,\gamma_2)$ of proper contractions. 
Then the Cantor set $K$ is the unique compact subset of $\Omega$ 
such that $K = \bigcup_{i=1}^N \gamma_i(K)$. Thus  
$\gamma = (\gamma_1,\gamma_2)$ 
is a self-similar map on $K$.  
Since $\gamma$ has no branch point, the core $\F^{(\infty)}$ is simple.
\end{exam}

We shall show that
If $\gamma$ has a branch point, then the core $\F^{(\infty)}$ is not 
simple. Moreover we can describe the ideal structure of 
the core $\F^{(\infty)}$ explicitly in terms of the singularity 
structure of branch points.  In fact the ideal structure is completely 
determined by the intersection with the $C(K)$.

In general, let $B$ a \cst-algebra and 
$A$ a subalgebra and $L$ an ideal of $B$. It is difficult 
to describe the ideals $I$ of $A + L$ in terms of $A$ and $L$ 
independently.  
The most simple example is the following: $B = {\mathbb C^2}$, 
$L = {\mathbb C} \oplus 0$ and 
$A = \{(a,a) \in B \ | \ a \in {\mathbb C}\}$. 
Let $I = 0 \oplus {\mathbb C}$. Then 
$I \not= I \cap A + I\cap L = 0 +0 = 0$.  
We use a matrix representation over $C(K)$ of the core 
and its description by the singularity structure of branch points 
to overcome this difficulty. Here the finiteness of the branch values and 
continuity of any element of $\F^{(n)} \subset C(K, M_{N^n})$ are crucially 
used to analyze the ideal structure.

We shall show that any ideal $I$ of the core 
is determined by the closed subset of the self-similar set 
which corresponds to the  ideal $C(K) \cap I$ of C(K). 
We describe all closed subsets of $K$ which arise in this way 
explicitly to complete the classification of ideals of the core.

Recall that $n$-th $\gamma$-orbit of $b$ is the following subset of $K$:
\[
O_{b,n}  = \{\,\gamma_{j_1}\circ \cdots \circ \gamma_{j_n}(b)\,|\,
            (j_1,\dots,j_n) \in \Sigma^{n}\,\} = h^{-1}(b).
\]
And $Orb = \bigcup_{b \in B_{\gamma}}
\bigcup_{k=0}^{\infty}O_{b,k}$, where $O_{b,0} =\{b\}$. 

\begin{lemma} \label{lem:orbit}
If the closed set $F_0^I$ has an element $a \notin Orb$,  
then $F_m^I=K$  for any $m= 0,1,2,3,\dots.$ In particular, 
if $F_0^I=K$, then $F_m^I=K$  for any $m= 0,1,2,3,\dots.$
\end{lemma}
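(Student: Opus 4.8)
The plan is to reduce everything to a single inductive step, $F_n^I = K \Rightarrow F_{n+1}^I = K$, and to bootstrap from the base case $F_0^I = K$. For the base case I would use the hypothesis $a \in F_0^I$ with $a \notin Orb$ together with Lemma~\ref{lem:F0}: every $b$ with $h^n(b) = h^n(a)$ for some $n \geq 1$ lies in $F_0^I$, and the case $n = 0$ gives $a$ itself, so the whole set $C(a) = \{\, b \in K \mid h^n(b) = h^n(a) \text{ for some } n \geq 0 \,\}$ is contained in $F_0^I$. By Lemma~\ref{lem:dense} the set $C(a)$ is dense in $K$, and since $F_0^I$ is closed this forces $F_0^I = K$.

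For the inductive step I would assume $F_n^I = K$ and show $F_{n+1}^I = K$. Since $F_{n+1}^I$ is closed it is enough to prove it is dense, so fix $t \in K$ and $\varepsilon > 0$ and produce a point of $F_{n+1}^I$ within $\varepsilon$ of $t$. Choose a preimage $a_0 = \gamma_1(t)$, so $h(a_0) = t$. Because $B_\gamma$ is finite there is a neighborhood $U$ of $a_0$ meeting $B_\gamma$ in at most the point $a_0$, and because $K$ has no isolated points there are points $a \in U$ with $a \neq a_0$ arbitrarily close to $a_0$; any such $a$ satisfies $a \notin B_\gamma$. Since $F_n^I = K$ we have $a \in F_n^I$, and as $a \notin B_\gamma$ Lemma~\ref{lem:converse} gives $h(a) \in F_{n+1}^I$. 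By continuity of $h$ (Assumption B (1)), $h(a)$ can be made arbitrarily close to $h(a_0) = t$, so $t$ is a limit of points of $F_{n+1}^I$; closedness then yields $t \in F_{n+1}^I$, and hence $F_{n+1}^I = K$. The final assertion of the lemma is the same induction started from the hypothesis $F_0^I = K$, so it needs no separate argument.

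I expect the only real obstacle to be the inductive step, and specifically the fact that one cannot simply lift $t$ through a non-branch preimage lying exactly over it: all preimages of $t$ may be branch points (this already occurs for the tent map at $t = 1$, where $\gamma_1(1) = \gamma_2(1) = 1/2 \in B_\gamma$). The way around this is to perturb $a_0$ to a nearby non-branch point, which is possible precisely because $B_\gamma$ is finite and $K$ has no isolated points, and then to transport the membership $h(a) \in F_{n+1}^I$ back to $t$ using continuity of $h$ and the closedness of $F_{n+1}^I$.
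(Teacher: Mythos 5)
Your proof is correct, and its base case coincides with the paper's argument: an element $a \in F_0^I$ with $a \notin Orb$ forces $C(a) \subset F_0^I$ by Lemma \ref{lem:F0}, $C(a)$ is dense by Lemma \ref{lem:dense}, and closedness gives $F_0^I = K$; the ``in particular'' clause via countability of $Orb$ also matches. Where you genuinely diverge is the passage from $F_0^I = K$ to $F_m^I = K$ for $m \ge 1$: the paper's written proof stops after establishing $F_0^I = K$ and the equivalence with the existence of a non-orbit element, leaving the upward propagation implicit. The mechanism the authors presumably have in mind is already visible in their proof of Lemma \ref{lem:F0}: a single witness $a \notin Orb$ satisfies $h^k(a) \notin B_{\gamma}$ for all $k$, so iterating Lemma \ref{lem:converse} gives $h^m(a) \in F_m^I$, and then Lemma \ref{lem:ideal1} together with density of $C(h^m(a))$ (Lemma \ref{lem:dense}) yields $F_m^I = K$ from that one point. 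Your induction replaces this orbit-based argument with a local perturbation: you need only $a \notin B_{\gamma}$ rather than $a \notin Orb$, at the price of invoking the inductive hypothesis $F_n^I = K$ (which you have), finiteness of $B_{\gamma}$, absence of isolated points in $K$, continuity of $h$, and closedness of $F_{n+1}^I$ --- all of which the paper establishes. Your diagnosis of the obstacle is also accurate: one cannot always lift $t$ through a non-branch preimage, since $h^{-1}(t)$ may consist entirely of branch points (the tent map at $t=1$ with $h^{-1}(1)=\{1/2\}$), so Lemma \ref{lem:converse} cannot be applied at the exact preimage; your perturbation handles this and in fact makes fully explicit a step the paper glosses over. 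The paper's implicit route is more economical, as one dense set $C(h^m(a))$ obtained from a single witness covers all of $K$ at every level simultaneously, while your argument is more local and self-contained, resting only on the two transfer lemmas and elementary topology.
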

\begin{proof} Suppose that $F_I^0$ has an element $a \notin Orb$. 
By Lemma \ref{lem:dense}, 
$C(a):= \{b \in K \ | \  h^n(b) =
  h^n(a) \ { \text for \  some \ }  n=0,1,2,3,\dots \}$   
is dense in $K$.  By Lemma \ref{lem:F0}, we have 
$C(a) \subset F_0^I$. Since $F_0^I$ is closed,  
we have $F_0^I=K$.

If  $F_0^I=K$, then $F_I^0$ has an element $a \notin Orb$, 
because, we always have that $K \not=  Orb$.   In fact 
$Orb$ is a countable set. The self-similar set $K$ is a Baire space 
and any point of $K$ is not an isolated point, Hence $K$ 
is an uncountable set. Hence the proof is completed.  
\end{proof}

\begin{prop} \label{prop:finite}
  If $F_0^I \ne K$, then there exists $b_1, b_2,  \dots, b_k \in B_{\gamma}$ 
and integers  \ $m_1,m_2,\dots m_k \geq 0$ such that 
\[
F_0^I = \cup_ {i=1}^k O_{b_i,m_i}, 
\]
that is, $F_0^I$ is a finite union of finite $\gamma$-orbits of
branch points.
\end{prop}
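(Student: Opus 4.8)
The plan is to combine the two ``vertical'' lemmas, Lemma~\ref{lem:converse} and Lemma~\ref{lem:ideal1}, to show that $F_0^I$ is saturated into complete $\gamma$-orbits of branch points, and then to use a compactness/fixed-point argument to bound the number of such orbits. First I would dispose of the ambient reduction: since $F_0^I \ne K$, the contrapositive of Lemma~\ref{lem:orbit} gives $F_0^I \subseteq Orb$, so every point of $F_0^I$ can be written as $\gamma_{{\bf j}}(b)$ for some $b \in B_{\gamma}$ and ${\bf j} \in \Sigma^m$. Thus it suffices to show that $F_0^I$ is a \emph{finite} union of complete orbits $O_{b,m}$.

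Next I would prove orbit-saturation: if $a \in F_0^I$ and $a = \gamma_{{\bf j}}(b) \in O_{b,m}$ with $b \in B_{\gamma}$ and $m = |{\bf j}|$, then in fact $O_{b,m} \subseteq F_0^I$. For $m=0$ this is trivial since $O_{b,0} = \{b\}$. For $m \ge 1$, note that $h^i(a) \in O_{b,m-i}$ for $0 \le i \le m$, and by Assumption B(3) (namely $B_{\gamma} \cap P_{\gamma} = \emptyset$) every $O_{b,k}$ with $k \ge 1$ is disjoint from $B_{\gamma}$; hence $a, h(a), \dots, h^{m-1}(a) \notin B_{\gamma}$. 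Applying Lemma~\ref{lem:converse} successively then yields $h^m(a) = b \in F_m^I$. Conversely, once $b \in F_m^I$, for any $a' \in O_{b,m}$ we have $h^m(a') = b \in F_m^I$, so applying Lemma~\ref{lem:ideal1} $m$ times (peeling off one $h$ at a time, with no branch hypothesis needed) gives $a' \in F_0^I$. This shows $O_{b,m} \subseteq F_0^I$ if and only if $b \in F_m^I$, and therefore $F_0^I = \bigcup \{\, O_{b,m} \mid b \in B_{\gamma},\ b \in F_m^I \,\}$, a union of complete orbits.

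The main obstacle is to show that only finitely many orbits occur, i.e.\ that $S := \{(b,m) \mid b \in B_{\gamma},\ b \in F_m^I\}$ is finite. Here I would argue by contradiction and compactness. Since $B_{\gamma}$ is finite by Assumption B(2), an infinite $S$ forces some fixed $b \in B_{\gamma}$ with $b \in F_m^I$ for infinitely many $m$. For each such $m$ the point $\gamma_1^m(b)$ lies in $O_{b,m} \subseteq F_0^I$, and since $\gamma_1$ is a proper contraction of the compact (hence complete) space $K$, the sequence $\gamma_1^m(b)$ converges to the unique fixed point $p$ of $\gamma_1$. As $F_0^I$ is closed, $p \in F_0^I \subseteq Orb$. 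The contradiction then comes from Assumption B(3): since $\gamma_1(p)=p$, applying $h$ and using $h(\gamma_1(y))=y$ gives $h(p)=p$, hence $h^l(p)=p$ for all $l$. But $p \in Orb$ means $p = \gamma_{{\bf j}}(b')$ with $b' \in B_{\gamma}$ and $|{\bf j}|=l$, whence $p = h^l(p) = h^l(\gamma_{{\bf j}}(b')) = b' \in B_{\gamma}$; then $\gamma_1(p)=p \in B_{\gamma}$ shows $p \in P_{\gamma}$, contradicting $B_{\gamma}\cap P_{\gamma}=\emptyset$. Hence $S$ is finite and $F_0^I = \bigcup_{i=1}^k O_{b_i,m_i}$, completing the proof.
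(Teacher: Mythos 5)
Your proof is correct, and while its skeleton (reduce to $F_0^I \subseteq Orb$ via the contrapositive of Lemma \ref{lem:orbit}, then pigeonhole over the finite set $B_{\gamma}$) matches the paper, the key finiteness step runs along a genuinely different line. The paper argues: if $O_{b,m_j} \subseteq F_0^I$ for infinitely many $m_j$ and a single $b \in B_{\gamma}$, then $\bigcup_j O_{b,m_j}$ is dense in $K$ by the same cylinder-diameter estimate as in Lemma \ref{lem:dense} (every set $\gamma_{\bf i}(K)$, ${\bf i} \in \Sigma^{m_j}$, has diameter at most $Mc^{m_j}$ and contains the orbit point $\gamma_{\bf i}(b)$), so closedness forces $F_0^I = K$, a contradiction. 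You instead follow the single sequence $\gamma_1^{m}(b) \in O_{b,m}$ to the unique fixed point $p$ of the proper contraction $\gamma_1$, put $p \in F_0^I \subseteq Orb$ by closedness, and exclude this via $h(p)=p$ (Assumption B(1)) together with $B_{\gamma} \cap P_{\gamma} = \emptyset$ (Assumption B(3)); this is sound, trades the density argument for a more local one-sequence compactness argument, but invokes B(1) and B(3) where the paper's density route needs neither at that point. A genuine merit of your write-up is the explicit orbit-saturation step: using that B(3) forces $O_{b,k} \cap B_{\gamma} = \emptyset$ for $k \ge 1$, you iterate Lemma \ref{lem:converse} upward and Lemma \ref{lem:ideal1} downward to prove $O_{b,m} \subseteq F_0^I$ if and only if $b \in F_m^I$, so that $F_0^I$ is a union of \emph{complete} orbits. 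The paper's proof only establishes $F_0^I \subseteq Orb$ and the finiteness of the full orbits contained in $F_0^I$, leaving this saturation implicit (it is needed to conclude the stated equality $F_0^I = \bigcup_{i=1}^k O_{b_i,m_i}$ rather than mere containment in a union of orbits); your version closes that gap explicitly.
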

\begin{proof} Assume that $F_0^I \ne K$. then $F_0$ does
not contain any point outside $Orb$ by Lemma \ref{lem:orbit}. 
On the contrary we  suppose that $F_0^I$ contains
  infinite finite $\gamma$-orbit of branch points 
Since $B_{\gamma}$ is
  finite, there exists $b \in B_{\gamma}$ such that for each $n \in {\mathbf
  N}$ there exists $m \ge n$ with $O_{b,m} \subset F_0^I$.  We list such
integers as $(m_1,m_2,m_3,\dots)$ with $m_1<m_2<m_3<\dots$.
By the same proof as Lemma \ref{lem:dense},
  $\bigcup_{j=1}^{\infty}Orb(b,m_j)$ is dense in $K$. Hence  $F_0^I$ is
  equal to $K$. But this is a contradiction.
\end{proof}

For an ideal $I$ of $\F^{(\infty)}$, we denote by $I_r$ the intersection
$I \cap \F^{(r)}$.

\begin{lemma} Let $I$ be an ideal of $\F^{(\infty)}$. If $F_0^I=K$,
then  $F_n^I=K$ for any $n = 0,1,2,3,\dots.$ 
Moreover we have that $I=\{0\}$.
\end{lemma}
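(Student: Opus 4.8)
The first assertion demands no new argument: it is exactly the \textit{in particular} clause of Lemma~\ref{lem:orbit}, since $F_0^I = K$ forces $F_n^I = K$ for every $n$. So the plan is to deduce $I = \{0\}$ from the fact that all of the closed sets $F_n^I$ are full. First I would translate this into a statement about the ideals $I_n = I \cap \K(X_\gamma^{\otimes n})$: because $F_n^I = K$, the corresponding ideal $J_n = \psi(I_n)$ of $C(K)$ consists of functions vanishing on all of $K$, whence $J_n = \{0\}$; since the Rieffel correspondence $\psi$ is a lattice isomorphism carrying $\{0\}$ to $\{0\}$, this gives $I_n = I \cap \K(X_\gamma^{\otimes n}) = \{0\}$ for every $n$.

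The second step is to reduce the vanishing of $I$ to the vanishing of each $I \cap \F^{(n)}$. Since $\F^{(\infty)} = \overline{\bigcup_n \F^{(n)}}$ is an inductive limit, any closed ideal satisfies $I = \overline{\bigcup_n (I \cap \F^{(n)})}$, so it suffices to prove $I \cap \F^{(n)} = \{0\}$ for each fixed $n$. Here the key point is that, although $\F^{(n)}$ is strictly larger than $\K(X_\gamma^{\otimes n})$ when $\gamma$ has branch points, the compacts still form an essential ideal of $\L(X_\gamma^{\otimes n})$. Concretely I would take $T \in I \cap \F^{(n)} \subset \L(X_\gamma^{\otimes n})$ and an arbitrary $R \in \K(X_\gamma^{\otimes n})$, and observe that $TR$ lies simultaneously in $I$ (as $I$ is an ideal) and in $\K(X_\gamma^{\otimes n})$ (as the latter is an ideal of $\L(X_\gamma^{\otimes n})$), so $TR \in I_n = \{0\}$.

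Finally I would invoke non-degeneracy of the $\K(X_\gamma^{\otimes n})$-action on the module: choosing an approximate unit $(u_\lambda)$ of $\K(X_\gamma^{\otimes n})$, one has $T u_\lambda = 0$ for every $\lambda$ while $u_\lambda \xi \to \xi$ for each $\xi \in X_\gamma^{\otimes n}$, so $T\xi = \lim_\lambda T(u_\lambda \xi) = 0$ and therefore $T = 0$. This yields $I \cap \F^{(n)} = \{0\}$ for all $n$, and hence $I = \{0\}$.

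The one genuinely delicate issue — and the reason the naive identity $I = \overline{\bigcup_n I_n}$ available in the no-branch-point case cannot be used here — is precisely that $I \cap \F^{(n)}$ is a priori larger than $I_n$ once $\gamma$ has branch points. The argument above bridges this gap using nothing more than essentiality of the compacts inside $\L(X_\gamma^{\otimes n})$, so I expect no serious obstacle; the step most worth stating with care is the implication $F_n^I = K \Rightarrow I_n = \{0\}$ via the Rieffel correspondence, together with the passage $I = \overline{\bigcup_n (I \cap \F^{(n)})}$ for ideals of an inductive limit.
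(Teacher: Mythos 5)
Your proof is correct, and while its skeleton matches the paper's, the key step is handled by a genuinely different argument. Both proofs start the same way: the first assertion is the \emph{in particular} clause of Lemma~\ref{lem:orbit}; the Rieffel correspondence converts $F_n^I=K$ into $I\cap \K(X_{\gamma}^{\otimes n})=\{0\}$; and the inductive-limit identity $I=\overline{\bigcup_n (I\cap \F^{(n)})}$ reduces everything to showing $I\cap\F^{(n)}=\{0\}$. At that point the paper stays inside its matrix representation: for $T\in I\cap\F^{(n)}$ it chooses scalar cutting functions $f_{\varepsilon}$ vanishing on a neighborhood of the finite singular set $C_{\gamma^n}$ and equal to $1$ farther out, uses the $D^{\gamma^n}$-description (this is where Assumption B and the finiteness of branch values enter, since the identity matrix violates the identification equations at branch values, so the function must vanish near them) to realize $f_{\varepsilon}\cdot I_{N^n}$ as $\Pi^{(n)}(S_{\varepsilon})$ with $S_{\varepsilon}\in\K(X_{\gamma}^{\otimes n})$, deduces $S_{\varepsilon}T\in I\cap\K(X_{\gamma}^{\otimes n})=0$, and concludes $\Pi^{(n)}(T)=0$ by letting $\varepsilon\to 0$ and using continuity of the matrix-valued function $\Pi^{(n)}(T)$. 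You instead multiply $T$ by an \emph{arbitrary} compact $R$, observe $TR\in I\cap\K(X_{\gamma}^{\otimes n})=\{0\}$, and invoke the general facts that $\K(X_{\gamma}^{\otimes n})$ is essential in $\L(X_{\gamma}^{\otimes n})$ and acts non-degenerately on the module: $Tu_{\lambda}=0$ for an approximate unit while $u_{\lambda}\xi\to\xi$, so $T=0$ as an adjointable operator, and faithfulness of Pimsner's isometric representation $\varphi:\F^{(n)}\to\L(X_{\gamma}^{\otimes n})$ (which the paper itself quotes) turns this into $T=0$ in $\F^{(\infty)}$. Your route is more abstract and strictly more general: it needs neither the matrix picture, nor the finiteness of $C_{\gamma^n}$, nor Assumption B for this step, and it dispenses with the paper's induction, whose hypothesis $I\cap\F^{(n-1)}=0$ is in fact never used in the paper's own inductive step. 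What the paper's concrete argument buys is coherence with the toolkit used throughout section~4 --- the same cutting-function device near the singular set reappears in the later classification lemmas --- and an explicit picture of how elements of $I\cap\F^{(n)}$ vanish as continuous matrix-valued functions; it is also self-contained in the sense of not appealing to the abstract essentiality of compacts. Your observation that the naive identity $I=\overline{\bigcup_n I_n}$ fails in the branched case, and that the bridge from $I_n=0$ to $I\cap\F^{(n)}=0$ is the real content, is exactly the right diagnosis.
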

\begin{proof} Suppose that $F_0^I=K$. This means that $I \cap C(K) = 0.$ 
By Lemma \ref{lem:orbit}, we have that 
$F_m^I=K$  for any $m= 0,1,2,3,\dots.$  This implies that 
$I \cap \K(X_{\gamma}^{\otimes n}) = 0$. We need to  show that 
$I \cap \F^{(n)} = 0$. We shall prove it by induction. 
\[
I \cap \F^{(0)} = I \cap A = I \cap C(K) = 0. 
\]
Assume that $I \cap \F^{(n-1)} = 0$. 
But we should be careful, because we have the form 
$\F^{(n)} = \F^{(n-1)} + \K(X_{\gamma}^{\otimes n})$.  We only knows that 
$I \cap \K(X^{\otimes n}) = 0$. It is trivial that
\[ 
I \cap \F^{(n)} \supset I \cap \F^{(n-1)}  +  I \cap \K(X_{\gamma}^{\otimes n})
\]
But the converse inclusion is not trivial in general.
 Our singularity  situation 
helps us to prove it. In fact any element in $\F^{(n)}$
are represented by a continuous map from $K$ to $M_{N^n}(\comp)$ through 
$\Pi^{(n)}$.  Let $T$ be an element of $I_n  = I \cap \F^{(n)}$. 
We identify $T$ with $\Pi^{(n)}(T)$. 
It is enough to show that $\Pi^{(n)}(T) = 0$. 
For small $\varepsilon >0$, we put
\[
U_{\varepsilon} = \{\,x \in K\,|\, d(x,y) < \varepsilon \text{ for some $y
  \in C_{\gamma^n}$ } \,\}
\]
Let Take $f_{\varepsilon} \in C(K) $ such that 
$f_{\varepsilon}$ is $0$ on  $U_{\varepsilon}$ and
  $I$ outside of 
$U_{2\varepsilon}$. Define $_{\varepsilon} \in C(K, M_{N^n}(\comp))$ by 
$ g_{\varepsilon}(x) = f_{\varepsilon}(x)I$ for $x \in K$. 
Then there exists $S_{\varepsilon} \in \K(X_{\gamma}^{\otimes n})$ such that
$\Pi^{(n)}(S_{\varepsilon}) = g_{\varepsilon}$.  
Since $S_{\varepsilon}T$  is in $I \cap
  \K(X_{\gamma}^{\otimes
  n})= 0$, $S_{\varepsilon}T=0$ for every $\varepsilon>0$.  Then it holds
that $\Pi^{(n)}(T)(x)=0$ for $x \notin U_{2\varepsilon}$ with each
  $\varepsilon > 0$.  
By the continuity of $\Pi^{(n)}(T) \in C(K,M_{N^n}(\comp))$,  
$\Pi^{(n)}(T)(x)=0$ holds for each $x \in K$. This means that $T = 0$. 
This complete the induction. Therefore 
$I = \lim_{n\rightarrow \infty} I \cap \F^{(n)} = 0$. 
\end{proof}

We shall construct a family 
\[
\{\overline{J}^{(b,n)} \ | \ b \in B_{\gamma},\  n=0,1,2,3,\dots  \}
\] 
of model primitive ideals of the core $\F^{(\infty)}$ 
such that $\{\overline{J}^{(b,n)}\} \cap C(K)$ corresponds to 
the closed subset $O_{b,n}$ of $K$.

Let $b$ be an element in $B_{\gamma}$.  Put $J^{(b,n,n)}=\{\,T \in
\F^{(n)}\,|\,\Pi^{(n)}(T)(b)=0\}$.  Then $\Pi^{(n)}(J^{(b,n,n)})$ is an
ideal of $\Pi^{(n)}(\F^{(n)})$ and the quotient
$\Pi^{(n)}(\F^{(n)})/\Pi^{(n)}(J^{(b,n,n)})$ is isomorphic to
$M_{N^n}(\comp)$.  Put $J^{(b,n,m)}= J^{(b,n,n)}+ \K(X_{\gamma}^{\otimes
n+1})+
\cdots + \K(X_{\gamma}^{\otimes m})$ for $n < m$. 
Then  $J^{(b,n,m)}$ is an ideal of
$\F^{(m)}$, and $\{J^{(b,n,m)}\}_{m=n+1,\dots}$ is an increasing
filter.  We denote by $\overline{J}^{(b,n)}$ the norm
closure of $\bigcup_{m=n+1}^{\infty}J^{(b,n,m)}$.  Then
$\overline{J}^{(b,n)}$ is a closed ideal of $\F^{(\infty)}$.

We will show that $\overline{J}^{(b,n)} \cap \F^{(n)}=J^{(b,n,n)}$ and
$\overline{J}^{(b,n)}$ is primitive. It is trivial that 
$\overline{J}^{(b,n)} \cap \F^{(n)} \supset J^{(b,n,n)}$. 
It is unclear whether 
$\overline{J}^{(b,n)} \cap \F^{(n)} \subset J^{(b,n,n)}$. 
We shall show it by finding that 
$\overline{J}^{(b,n)}$ is the kernel of a finite
trace on $\F^{(\infty)}$.  We  constructed a family of such traces on
$\F^{(\infty)}$ in \cite{KW3}. Recall that the kernel ${\rm ker} (\tau)$ 
of a trace $\tau$ on a \cst-algebra $B$ is defined by 
\[
{\rm ker} (\tau) = \{ b \in B \ | \ \tau(b^*b) = 0 \}.
\]
and ${\rm ker} (\tau)$ is an ideal of $B$. Moreover, let $\pi_{\tau}$ 
be the GNS-representation of $\tau$. Then 
${\rm ker} (\tau) = {\rm ker} \pi_{\tau}$.

For the convenience of the readers, we include 
a simple construction of these traces
using matrix representation of the core. 

As in \cite{KW3}, we need the following Lemma for extension of
traces.  Let $B$ be a \cst -algebra and $I$ be an ideal of $B$.  For a
linear functional
$\varphi$ on $I$, we denote by $\overline{\varphi}$ the canonical extension
of $\varphi$.  We refer \cite{B} the property of the canonical extension of
states.
The following key lemma is  proved  in Proposition 12.5 of
  Exel and Laca \cite{EL} for
state case, and is modified  in Kajiwara and Watatani \cite{KW3} for trace
case.

\begin{lemma} \cite{KW3} \label{lemma:extension}
Let $A$ be a unital \cst algebra.  Let $B$ be a \cst-subalgebra containing
the unit and $I$ an ideal of $A$ such that $A=B+I$.  Let $\tau$ be a
bounded trace on $B$, and $\varphi$ a bounded trace on  $I$, and we
assume the following conditions are satisfies:
\begin{enumerate}
  \item $\varphi = \tau$ holds on $B \cap I$.
  \item $\overline{\varphi} \le \tau$ holds on $B$.
\end{enumerate}
Then there exists a bounded trace on $A$ which extends $\tau$ and 
$\varphi$.
Conversely, if there exists a bounded trace on $A$, its restrictions on 
$B$ and
$I$ must satisfy the above (1) and (2).
\end{lemma}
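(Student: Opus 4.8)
The plan is to build the desired trace on $A$ from the \emph{canonical extension} $\overline{\varphi}$ of $\varphi$ to $A$. Recall from \cite{B} that, for a bounded positive functional $\varphi$ on a closed ideal $I$ of $A$, the canonical extension $\overline{\varphi}$ is a bounded positive functional on $A$ with $\overline{\varphi}|_I = \varphi$, $\|\overline{\varphi}\| = \|\varphi\|$, and it is the \emph{smallest} positive extension, in the sense that $\rho \ge \overline{\varphi}$ on $A_+$ for every positive extension $\rho$ of $\varphi$. I will also use that $\overline{\varphi}$ is a trace whenever $\varphi$ is. The cleanest way to see these last properties is to pass to the bidual: if $z \in A^{**}$ is the central support projection of $I$, then $I^{**} = zA^{**}$, the extension $\varphi^{**}$ is a normal tracial functional, and $\overline{\varphi}(a) = \varphi^{**}(za)$. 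Centrality of $z$ together with the trace property of $\varphi^{**}$ on $I^{**}$ gives $\overline{\varphi}(ab) = \varphi^{**}((za)(zb)) = \varphi^{**}((zb)(za)) = \overline{\varphi}(ba)$, while for $a \ge 0$ and any positive extension $\rho$ one has $\rho(a) = \rho^{**}(za) + \rho^{**}((1-z)a) = \overline{\varphi}(a) + \rho^{**}((1-z)a) \ge \overline{\varphi}(a)$, since $\rho^{**}|_{I^{**}} = \varphi^{**}$ and $(1-z)a \ge 0$.

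For the existence direction I would set $\psi := \tau - \overline{\varphi}|_B$ on $B$. By hypothesis (2) we have $\psi \ge 0$, and $\psi$ is a bounded trace on $B$ because both $\tau$ and $\overline{\varphi}|_B$ are. Next I would check that $\psi$ annihilates the ideal $B \cap I$ of $B$: for $x \in B \cap I \subset I$ we have $\overline{\varphi}(x) = \varphi(x)$, so $\psi(x) = \tau(x) - \varphi(x) = 0$ by hypothesis (1). Hence $\psi$ factors through the quotient $B/(B \cap I)$. Since $A = B + I$, the isomorphism theorem gives a $*$-isomorphism $B/(B\cap I) \cong A/I$, and composing the induced functional with the quotient map $A \to A/I$ produces a bounded positive trace $\tilde\psi$ on $A$ that vanishes on $I$ and satisfies $\tilde\psi|_B = \psi$. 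I would then define
\[
\rho := \overline{\varphi} + \tilde\psi,
\]
which is a bounded positive trace on $A$. It extends $\varphi$, since for $x \in I$ we get $\rho(x) = \overline{\varphi}(x) + \tilde\psi(x) = \varphi(x) + 0$, and it extends $\tau$, since for $b \in B$ we get $\rho(b) = \overline{\varphi}(b) + \psi(b) = \overline{\varphi}(b) + \tau(b) - \overline{\varphi}(b) = \tau(b)$.

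For the converse, suppose $\rho$ is a bounded trace on $A$ with $\rho|_B = \tau$ and $\rho|_I = \varphi$. Condition (1) is immediate: on $B \cap I$ both $\tau$ and $\varphi$ agree with $\rho$, so $\tau = \varphi$ there. For condition (2), I would use the minimality recorded above: $\rho$ is a positive extension of $\varphi$, hence $\rho \ge \overline{\varphi}$ on $A_+$, and in particular $\tau(b) = \rho(b) \ge \overline{\varphi}(b)$ for $b \in B_+$, i.e. $\overline{\varphi} \le \tau$ on $B$.

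The only real work lies in the properties of the canonical extension, and the step I expect to be the main obstacle is the minimality inequality $\rho \ge \overline{\varphi}$ needed for the converse, together with the verification that $\overline{\varphi}$ is again a trace. Both are handled uniformly by the bidual and central-projection argument sketched above; alternatively they can be extracted from the approximate-unit description $\overline{\varphi}(a) = \lim_\lambda \varphi(e_\lambda^{1/2} a e_\lambda^{1/2})$ for an approximate unit $(e_\lambda)$ of $I$, following \cite{EL} and \cite{KW3}.
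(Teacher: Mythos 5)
Your proof is correct, and it takes essentially the same route as the source the paper relies on: the paper itself gives no proof of this lemma, citing Proposition 12.5 of Exel--Laca \cite{EL} (state case) as modified in \cite{KW3} for traces, and that argument is precisely your canonical-extension scheme --- set $\psi = \tau - \overline{\varphi}|_B$, note that (1) makes $\psi$ vanish on $B \cap I$, push it through $B/(B\cap I) \cong A/I$ (using $A = B + I$), and add it back to $\overline{\varphi}$, with the converse following from minimality of the canonical extension. Your bidual argument via the central support projection $z$ of $I$ is a clean, uniform way to supply the two facts the citation leaves implicit, namely that $\overline{\varphi}$ is again a trace and that $\rho \ge \overline{\varphi}$ on positive elements for every positive extension $\rho$ of $\varphi$.
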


We note that $\Pi^{(n)}(\F^{(n)}) \subset M_{N^n}({\rm C}(K)) \simeq {\rm
C}(K,M_{N^n}(\comp))$, and
$\Pi^{(n)}(\F^{(n)})(x) \simeq M_{N^n}(\comp)$  for $x \notin 
C_{\gamma}$.
For $b \in B_{\gamma}$, we define a tracial state $\tau^{(b,n,n)}$
on $\F^{(n)}$ by
\[
  \tau^{(b,n,n)}(T) = \frac{1}{N^n} {\rm Tr}(\Pi^{(n)}(T)(b)),
\]
where ${\rm Tr}$ is the ordinary trace on the matrix algebra
$M_{N^n}(\comp)$.  For $m \ge n+1$, we define a trace $\omega^{(m)}$ on
$\K(X_{\gamma}^{\otimes m})$ by $\omega^{(m)}(T)=0$ for each $T \in
\K(X_{\gamma}^{\otimes m})$.

\begin{lemma} \label{lem:intersection}
Let $b \in B_{\gamma}$.  For $T \in \F^{(n)} \cap \K(X_{\gamma}^{\otimes
  n+1})$, we have $\Pi^{(n)}(T)(b)=0$.
\end{lemma}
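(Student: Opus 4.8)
The plan is to transfer the hypothesis $T\otimes I\in\K(X_\gamma^{\otimes n+1})$ into the matrix picture and then read off the value at $b$ from the identification equations defining $D^{\gamma^{n+1}}$. First I would apply $\Pi^{(n+1)}$: since $T\otimes I$ lies in the top-degree ideal $\K(X_\gamma^{\otimes n+1})$, the proposition identifying $\K(X_\gamma^{\otimes n+1})$ with $D^{\gamma^{n+1}}$ gives $\Pi^{(n+1)}(T\otimes I)\in D^{\gamma^{n+1}}$. On the other hand, by the commuting diagram of the matrix representation theorem together with the embedding lemma, $\Pi^{(n+1)}(T\otimes I)$ is the block-diagonal matrix $\diag(\alpha_l(\Pi^{(n)}(T)))_{l\in\Sigma}$. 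Writing the indices of $M_{N^{n+1}}(A)$ as pairs $(\mathbf s,l)$ with $\mathbf s\in\Sigma^n$ and $l\in\Sigma$ the coordinate created by $-\otimes I$, one then has $[\Pi^{(n+1)}(T\otimes I)]_{(\mathbf s,l),(\mathbf s',l')}(y)=0$ whenever $l\ne l'$, while it equals $[\Pi^{(n)}(T)(\gamma_l(y))]_{\mathbf s,\mathbf s'}$ when $l=l'$ (using $\alpha_l(a)(y)=a(\gamma_l(y))$). This is the one place where conventions matter, and I would first verify from the definition of the embedding that the coordinate introduced by $-\otimes I$ is exactly the block coordinate and that block $l$ is evaluated at $\gamma_l(y)$.

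The second ingredient is a supply of branch points of $\gamma^{n+1}$ adapted to $b$. Fix $b\in B_\gamma$, put $c=h(b)$, and recall $\gamma_{j(b,k)}(c)=b$ for $k=1,\dots,e_b$ with $e_b\ge 2$. For every prefix $\mathbf q\in\Sigma^n$ I would consider $b_{\mathbf q}:=\gamma_{\mathbf q}(b)$, which lies in $B_{\gamma^{n+1}}$ by Lemma \ref{lem:branch}, satisfies $h^{n+1}(b_{\mathbf q})=c$, and admits the $\gamma^{n+1}$-labels $(\mathbf q,j(b,1)),\dots,(\mathbf q,j(b,e_b))$, since $\gamma_{(\mathbf q,j(b,k))}(c)=\gamma_{\mathbf q}(\gamma_{j(b,k)}(c))=\gamma_{\mathbf q}(b)=b_{\mathbf q}$. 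These labels share the prefix $\mathbf q$ and differ only in the last coordinate, consistently with the uniqueness-of-the-differing-index assertion in Lemma \ref{lem:branch}.

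Finally I would feed these data into the column identification equations of $D^{\gamma^{n+1}}$ at the branch value $c$ for the branch point $b_{\mathbf q}$, namely $a_{i,(\mathbf q,j(b,k))}(c)=a_{i,(\mathbf q,j(b,k'))}(c)$ for $a=\Pi^{(n+1)}(T\otimes I)$ and $k\ne k'$. Taking the row index $i=(\mathbf p,j(b,k'))$ for an arbitrary $\mathbf p\in\Sigma^n$, the left-hand entry has mismatched last (block) coordinates $j(b,k')\ne j(b,k)$ and so vanishes by block-diagonality, whereas the right-hand entry lies in the single block $j(b,k')$ and equals $[\Pi^{(n)}(T)(\gamma_{j(b,k')}(c))]_{\mathbf p,\mathbf q}=[\Pi^{(n)}(T)(b)]_{\mathbf p,\mathbf q}$. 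Hence $[\Pi^{(n)}(T)(b)]_{\mathbf p,\mathbf q}=0$; letting $\mathbf p$ and $\mathbf q$ range over $\Sigma^n$ (the latter through the choice of $b_{\mathbf q}$) annihilates every entry, so $\Pi^{(n)}(T)(b)=0$.

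I expect the main obstacle to be organizational rather than conceptual: pinning down the index conventions so that the coordinate introduced by $-\otimes I$ really is the block coordinate of the matrix representation and that two labels of $b_{\mathbf q}$ differ precisely in that coordinate. Note that $T$ may have lower-order components, $T=\sum_{r=0}^n T_r\otimes I$; this causes no difficulty, because the argument only ever manipulates the single matrix function $\Pi^{(n)}(T)$ and never its decomposition. The degenerate case $n=0$, where $T\otimes I\in\K(X_\gamma)$ forces $T\in J_X$ and hence $T(b)=0$ for $b\in B_\gamma$, is recovered with $\mathbf q=\mathbf p$ empty and serves as a sanity check.
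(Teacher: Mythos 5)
Your proposal is correct and follows essentially the same route as the paper's proof: both exploit that $\Pi^{(n+1)}(T)$ is block diagonal over the coordinate created by $-\otimes I$, while membership in $\K(X_{\gamma}^{\otimes n+1})\simeq D^{\gamma^{n+1}}$ forces, via the branch points $\gamma_{\mathbf q}(b)\in B_{\gamma^{n+1}}$ lying over $c=h(b)$, each diagonal-block entry $[\Pi^{(n)}(T)(b)]_{\mathbf p,\mathbf q}$ to equal an off-diagonal (hence zero) entry. If anything, your write-up is more careful than the paper's terse version: you pin down the evaluation point $c=h(b)$ and the index conventions explicitly, and you rightly observe that the paper's opening reduction via \cite{FMR} to $T\in\K(X_{\gamma}^{\otimes n})\cap\K(X_{\gamma}^{\otimes n+1})$ is dispensable, since block-diagonality of $\Pi^{(n+1)}$ holds on all of $\F^{(n)}$.
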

\begin{proof}
 From \cite{FMR}, $\F^{(n)} \cap \K(X_{\gamma}^{\otimes n+1}) =
  \K(X_{\gamma}^{\otimes n}) \cap \K(X_{\gamma}^{\otimes n+1})$.
We can show the lemma using the matrix representation of the core.
Let $b=\gamma_{i}(c)=\gamma_j(c)$ with $i \ne j$.  Then
  $(i,i_2,\dots,i_{n+1})$-row, and
  $(j,i_2,\dots,i_{n+1})$-row of elements of
  $\Pi^{(n+1)}(\K(X_{\gamma}^{\otimes
  n+1})$ are equal, and $(i,i_2,\dots,i_{n+1})$-column and
  $(j,i_2,\dots,i_{n+1})$-column of elements of
  $\Pi^{(n+1)}(\K(X_{\gamma}^{\otimes  n+1})$ are equal for each
  $(i_2,\dots,i_{n+1}) \in \Sigma^n$.
This shows that $\Pi^{(n+1)}(T)(b)=0$ for $T \in
  \K(X_{\gamma}^{\otimes n})$ because elements in $\K(X_{\gamma}^{\otimes
  n})$ are represented as a block diagonal matrix by $\Pi^{(n+1)}$ and 
any element in a diagonal block must be  equal to an element in 
an  off-diagonal block 
which is zero. 

\end{proof}

\begin{lemma} \label{lem:trace}
  A tracial state $\tau^{(b,n,n)}$ on $\F^{(n)}$ and
a family of zero traces $\{\,\omega^{(m)}\,\}_{m=n+1,\dots}$ on
  $\K(X_{\gamma}^{\otimes m})$ $m=n+1,\dots$ give a unique tracial state
  $\tau^{(b,n)}$ on  $\F^{(\infty)}$ such that
  $\tau^{(b,n)}|_{\F^{(n)}}=\tau^{(b,n,n)}$  and
  $\tau^{(b,n)}|_{\K(X_{\gamma}^{\otimes m})} = \omega^{(m)}$ for $m \ge 
n+1$.
\end{lemma}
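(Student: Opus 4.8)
The plan is to build $\tau^{(b,n)}$ by climbing the tower $\F^{(n)} \subset \F^{(n+1)} \subset \cdots$, applying the extension Lemma \ref{lemma:extension} at each rung with the pair (trace on the current level, zero trace on the new top ideal), and then passing to the norm closure $\F^{(\infty)} = \overline{\bigcup_m \F^{(m)}}$. For the base step I would invoke Lemma \ref{lemma:extension} with $A = \F^{(n+1)} = \F^{(n)} + \K(X_{\gamma}^{\otimes n+1})$, the unital \cst-subalgebra $B = \F^{(n)}$, the ideal $I = \K(X_{\gamma}^{\otimes n+1})$, and the traces $\tau = \tau^{(b,n,n)}$, $\varphi = \omega^{(n+1)} = 0$. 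Condition (1) of that lemma asks that $\varphi$ and $\tau$ agree on $B \cap I = \F^{(n)} \cap \K(X_{\gamma}^{\otimes n+1})$; since $\varphi = 0$ this is exactly the assertion that $\tau^{(b,n,n)}$ vanishes there, and because $\tau^{(b,n,n)}(T) = N^{-n}\,{\rm Tr}(\Pi^{(n)}(T)(b))$, it follows from Lemma \ref{lem:intersection}, which gives $\Pi^{(n)}(T)(b) = 0$ on that intersection. Condition (2) asks $\overline{\varphi} \le \tau$ on $B$; as $\varphi = 0$ its canonical extension is $0$, and $\tau^{(b,n,n)} \ge 0$, so this is automatic. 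The extension produced has value $1$ on the unit (since $\tau^{(b,n,n)}(1)=1$) and is positive by the estimate in (2), hence is again a tracial state $\tau^{(b,n,n+1)}$ restricting to $\tau^{(b,n,n)}$ on $\F^{(n)}$ and to $0$ on $\K(X_{\gamma}^{\otimes n+1})$.

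For the inductive step I would assume a tracial state $\tau^{(b,n,m)}$ on $\F^{(m)}$ (with $m \ge n+1$) extending $\tau^{(b,n,n)}$ and vanishing on each $\K(X_{\gamma}^{\otimes k})$ for $n+1 \le k \le m$, and again apply Lemma \ref{lemma:extension} with $A = \F^{(m+1)} = \F^{(m)} + \K(X_{\gamma}^{\otimes m+1})$, $B = \F^{(m)}$, $I = \K(X_{\gamma}^{\otimes m+1})$, $\tau = \tau^{(b,n,m)}$, $\varphi = \omega^{(m+1)} = 0$. Here condition (1) becomes immediate: by the \cite{FMR} identity $\F^{(m)} \cap \K(X_{\gamma}^{\otimes m+1}) = \K(X_{\gamma}^{\otimes m}) \cap \K(X_{\gamma}^{\otimes m+1}) \subset \K(X_{\gamma}^{\otimes m})$, and $\tau^{(b,n,m)}$ already vanishes on $\K(X_{\gamma}^{\otimes m})$ by the induction hypothesis; condition (2) is automatic as before. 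This yields a tracial state $\tau^{(b,n,m+1)}$ on $\F^{(m+1)}$ with the same properties one level higher, completing the induction.

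The consistent family $(\tau^{(b,n,m)})_{m \ge n}$ then defines a tracial state on the dense \cst-subalgebra $\bigcup_m \F^{(m)}$; being bounded of norm $1$, it extends uniquely by continuity to a tracial state $\tau^{(b,n)}$ on $\F^{(\infty)}$, with $\tau^{(b,n)}|_{\F^{(n)}} = \tau^{(b,n,n)}$ and $\tau^{(b,n)}|_{\K(X_{\gamma}^{\otimes m})} = \omega^{(m)}$ for $m \ge n+1$. Uniqueness is then clear: any tracial state with these prescribed restrictions is forced on each $\F^{(m)} = \F^{(n)} + \sum_{k=n+1}^{m}\K(X_{\gamma}^{\otimes k})$ by linearity from its values on the summands, hence on the dense subalgebra $\bigcup_m \F^{(m)}$, and therefore on all of $\F^{(\infty)}$ by continuity.

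The only genuinely nontrivial point, and the one I expect to be the main obstacle, is condition (1) in the base step, namely the vanishing of $\tau^{(b,n,n)}$ on $\F^{(n)} \cap \K(X_{\gamma}^{\otimes n+1})$; this is precisely where Lemma \ref{lem:intersection} is needed, and through it the block-diagonal structure of the matrix representation $\Pi^{(n+1)}$ together with the \cite{FMR} identification of the intersection. Every other ingredient reduces to the triviality of the canonical extension of the zero trace and to positivity of $\tau^{(b,n,m)}$.
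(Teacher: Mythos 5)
Your proposal is correct and follows essentially the same route as the paper's proof: both apply Lemma \ref{lemma:extension} rung by rung up the tower $\F^{(m)} \subset \F^{(m+1)}$ with the zero trace on $\K(X_{\gamma}^{\otimes m+1})$, verify condition (1) at the base via Lemma \ref{lem:intersection} and at higher levels via the \cite{FMR} identity $\F^{(m)} \cap \K(X_{\gamma}^{\otimes m+1}) = \K(X_{\gamma}^{\otimes m}) \cap \K(X_{\gamma}^{\otimes m+1})$, and then pass to the inductive limit by continuity. You in fact spell out two points the paper leaves implicit --- the precise use of the induction hypothesis in condition (1) of the inductive step, and the uniqueness argument via the decomposition $\F^{(m)} = \F^{(n)} + \K(X_{\gamma}^{\otimes n+1}) + \cdots + \K(X_{\gamma}^{\otimes m})$ --- which is a welcome sharpening rather than a departure.
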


\begin{proof} First we consider a tracial state $\tau^{(b,n,n)}$ on
$\F^{(n)}$ and a zero trace $\omega^{(n+1)}$ on $\K(X_{\gamma}^{(n+1)})$.
Since the canonical extension $\overline{\omega^{(n+1)}}$ is the zero trace on
  $\F^{(n)}$,
 we have $\overline{\omega^{(n+1)}}(T) \le \tau^{(b,n,n)}(T)$ for
$T \in {\F^{(b,n)}}^+$.  By Lemma \ref{lem:intersection},
we have $\Pi^{(n)}(T)(b)=0$ for $T \in \F^{(n)} \cap
  \K(X_{\gamma}^{\otimes
  n+1})$.  Thus  we have $\tau^{(b,n,n)}=\omega^{(n+1)}$ on $\F^{(n)}
  \cap \K(X_{\gamma}^{\otimes n+1})$. By Lemma
  \ref{lemma:extension}, there
  exists a tracial state extension $\tau^{(b,n,n+1)}$ on $\F^{(n)}$ 
such that
  $(\tau^{(b,n,n+1)})|_{\F^{(n)}}=\tau^{(b,n,n)}$ and
  $(\tau^{(b,n,n+1)})|_{\K(X_{\gamma}^{\otimes n+1})}=\omega^{(n+1)}$.
In a similar way,  we can  construct an tracial state extension 
$\tau^{(b,n,m)}$ on $\F^{(m)}$ 
which satisfies that 
$\tau^{(b,n,m)}|_{\K(X_{\gamma}^{\otimes m})} = \omega^{(m)} = 0$
for $m \ge n+2$ using $\F^{(m-1)} \cap \K(X_{\gamma}^{\otimes m})
  =\K(X_{\gamma}^{\otimes
  m-1}) \cap \K(X_{\gamma}^{\otimes m})$ (\cite{FMR}).  Finally we 
define
  $\tau^{(b,n)}$
  on $\bigcup_{i=n}^{\infty}\F^{(m)}$ by
  $\{\tau^{(b,n,m)}\}_{m=n}^{\infty}$ and extend it to the whole
  $\F^{(\infty)} =\overline{\bigcup_{m=n}^{\infty}\F^{(m)}}$ to get the
desired property. 
\end{proof}

\begin{lemma} For $i \ge n$, we have $J^{(b,n,i)}={\rm ker}(\tau^{(b,n,i)})$ 
and $\overline{J}^{(b,n)}={\rm \ker}(\tau^{(b,n)})$. 
Moreover we have that  
\[
\overline{J}^{(b,i)}
\cap \F^{(n)}=J^{(b,n,n)}. 
\]
\end{lemma}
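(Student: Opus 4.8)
The plan is to reduce the whole statement to two facts: the faithfulness of the ordinary matrix trace ${\rm Tr}$, which pins down the kernel at the bottom level $m=n$, and the vanishing $\omega^{(m)}=0$ of the extending traces on $\K(X_\gamma^{\otimes m})$ for $m>n$, which forces every higher graded piece into each kernel. Once these are in hand, all three assertions follow formally from the relation $\tau^{(b,n,m)}=\tau^{(b,n)}|_{\F^{(m)}}$.

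I would begin with the base case $i=n$. Since $\Pi^{(n)}$ is a $*$-homomorphism and evaluation at $b$ is a $*$-homomorphism $M_{N^n}(A)\to M_{N^n}(\comp)$, one has $\Pi^{(n)}(T^*T)(b)=\Pi^{(n)}(T)(b)^*\Pi^{(n)}(T)(b)$, and ${\rm Tr}(M^*M)=0$ iff $M=0$. Hence $\tau^{(b,n,n)}(T^*T)=0$ iff $\Pi^{(n)}(T)(b)=0$, i.e. iff $T\in J^{(b,n,n)}$; so ${\rm ker}(\tau^{(b,n,n)})=J^{(b,n,n)}$. For $i>n$ I write $L=\sum_{n<m\le i}\K(X_\gamma^{\otimes m})$, which is a $*$-ideal of $\F^{(i)}$ with $\F^{(i)}=\F^{(n)}+L$ and $J^{(b,n,i)}=J^{(b,n,n)}+L$ (that $L$ is an ideal follows by induction from $\F^{(m)}=\F^{(m-1)}+\K(X_\gamma^{\otimes m})$ together with $\F^{(m-1)}\cap\K(X_\gamma^{\otimes m})=\K(X_\gamma^{\otimes m-1})\cap\K(X_\gamma^{\otimes m})$). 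Because ${\rm ker}(\tau^{(b,n,i)})$ is a closed ideal containing $J^{(b,n,n)}$ (as $\tau^{(b,n,i)}|_{\F^{(n)}}=\tau^{(b,n,n)}$) and containing $L$ (as $\tau^{(b,n,i)}$ vanishes on each $\K(X_\gamma^{\otimes m})$, hence on $L$), it contains $J^{(b,n,i)}$. For the reverse inclusion, given $T$ with $\tau^{(b,n,i)}(T^*T)=0$, decompose $T=S+R$ with $S\in\F^{(n)}$ and $R\in L$; since $L$ is an ideal, the terms $S^*R$, $R^*S$, $R^*R$ all lie in $L$ and are killed by $\tau^{(b,n,i)}$, so $\tau^{(b,n,n)}(S^*S)=\tau^{(b,n,i)}(T^*T)=0$. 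Thus $S\in J^{(b,n,n)}$ and $T=S+R\in J^{(b,n,i)}$, which proves $J^{(b,n,i)}={\rm ker}(\tau^{(b,n,i)})$.

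For the global statement I use $\tau^{(b,n,m)}=\tau^{(b,n)}|_{\F^{(m)}}$, which gives for every $m$
\[
{\rm ker}(\tau^{(b,n)})\cap\F^{(m)}={\rm ker}(\tau^{(b,n)}|_{\F^{(m)}})={\rm ker}(\tau^{(b,n,m)})=J^{(b,n,m)}.
\]
In particular $\overline{J}^{(b,n)}=\overline{\bigcup_m J^{(b,n,m)}}\subset{\rm ker}(\tau^{(b,n)})$. For the reverse inclusion I would invoke the standard fact that a closed ideal $I$ of the inductive limit $\F^{(\infty)}=\overline{\bigcup_m\F^{(m)}}$ satisfies $I=\overline{\bigcup_m(I\cap\F^{(m)})}$; its proof only needs that $\F^{(m)}/(\F^{(m)}\cap I)\hookrightarrow\F^{(\infty)}/I$ is isometric, so that any element of $I$ is approximated by elements of $I\cap\F^{(m)}$. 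Applying this to $I={\rm ker}(\tau^{(b,n)})$ yields ${\rm ker}(\tau^{(b,n)})=\overline{\bigcup_m J^{(b,n,m)}}=\overline{J}^{(b,n)}$.

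Finally, the intersection formula is immediate: for $i\ge n$,
\[
\overline{J}^{(b,n)}\cap\F^{(i)}={\rm ker}(\tau^{(b,n)})\cap\F^{(i)}={\rm ker}(\tau^{(b,n,i)})=J^{(b,n,i)},
\]
and in particular $\overline{J}^{(b,n)}\cap\F^{(n)}=J^{(b,n,n)}$, the equality left unclear above. The main obstacle is the reverse global inclusion ${\rm ker}(\tau^{(b,n)})\subset\overline{J}^{(b,n)}$: passing from exact vanishing at each finite level to the norm closure requires the inductive-limit ideal fact (equivalently, that the connecting maps of the isomorphisms $\F^{(m)}/J^{(b,n,m)}\cong M_{N^n}(\comp)$ are identity maps, so that $\F^{(\infty)}/\overline{J}^{(b,n)}$ is the simple algebra $M_{N^n}(\comp)$ and the induced trace is faithful). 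Everything else reduces to the faithfulness of ${\rm Tr}$ and to the ideal identity $\F^{(m-1)}\cap\K(X_\gamma^{\otimes m})=\K(X_\gamma^{\otimes m-1})\cap\K(X_\gamma^{\otimes m})$ used above.
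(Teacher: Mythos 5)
Your proof is correct and takes essentially the same route as the paper's: vanishing of the extending traces on the higher compacts kills all cross terms and reduces $\ker(\tau^{(b,n,i)})$ to the faithfulness of ${\rm Tr}$ at the point $b$ on the level-$n$ piece (your ideal $L$ just packages the paper's term-by-term graded decomposition $T=T_n+T_{n+1}+\cdots+T_i$), and the global identity $\ker(\tau^{(b,n)})=\overline{J}^{(b,n)}$ is obtained, exactly as in the paper, from the standard fact that a closed ideal $I$ of an inductive limit satisfies $I=\overline{\bigcup_m\,(I\cap\F^{(m)})}$. You also correctly read the lemma's $\overline{J}^{(b,i)}$ as a typo for $\overline{J}^{(b,n)}$, and your version $\overline{J}^{(b,n)}\cap\F^{(i)}=J^{(b,n,i)}$ for all $i\ge n$ is the (marginally stronger) statement the paper actually uses.
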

\begin{proof} By the definition of $J^{(b,n,i)}$, it is clear that 
  $J^{(b,n,i)}\subset {\rm ker}(\tau^{(b,n,i)})$.
Let $T = T_{n}+T_{n+1}+\cdots + T_{i}$,
where $T_{n} \in \F^{(n)}$, $T_{m} \in \K(X_{\gamma}^{\otimes m})$ with
$n+1 \le m \le i$.  Assume that  $\tau^{(b,n,i)}(T^*T)=0$.
Since  $\tau^{(b,n,i)}(T_k^*T_m)=0$ for $n+1 \le m \le i$ or 
$n+1 \le k \le i$, it holds
that $\tau^{(b,n,n)}(T_n^*T_{n})=0$.  Hence $T_{n} \in J^{(b,n)}$.  It 
follows that $T \in J^{(b,n,i)}:= J^{(b,n,n)} + \K(X_{\gamma}^{\otimes n+1})+
  \cdots + \K(X_{\gamma}^{\otimes i})$.

Since ${\rm ker}(\tau^{(b,n)})$ is an ideal of the inductive limit algebra 
$\F^{(\infty)} = \lim_n \F^{(n)}$, 
we have
\[
{\rm ker}(\tau^{(b,n)}) = \overline{\bigcup_{i=n}^{\infty}{\rm 
ker}(\tau^{(b,n)})\cap
\F^{(i)}} = \overline{\bigcup_{i=n+1}^{\infty}{\rm 
ker}(\tau^{(b,n,i)})}
= \overline{\bigcup_{i=n+1}^{\infty}J^{(b,n,i)}} = \overline{J}^{(b,n)}.
\]
Moreover 
\[
\overline{J}^{(b,n)}\cap \F^{(n)}=  {\rm ker}(\tau^{(b,n)}) \cap \F^{(n)}
=   {\rm ker} (\tau^{(b,n,n)}) =  J^{(b,n,n)}
\]
\end{proof}

\begin{lemma} For any $b \in B_{\gamma}$ and $n = 0,1,2,3,\dots$, 
$\overline{J}^{(b,n)}$ is a primitive ideal of $\F^{(\infty)}$ 
and $\F^{(\infty)}/\overline{J}^{(b,n)} \simeq M_{N^n}(\comp)$.
\end{lemma}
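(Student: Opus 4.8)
The plan is to identify the quotient $\F^{(\infty)}/\overline{J}^{(b,n)}$ explicitly with the full matrix algebra $M_{N^n}(\comp)$; primitivity will then be immediate, since a full matrix algebra is simple and carries a faithful irreducible representation. So the real work is the isomorphism, and the whole argument rests on showing that the image of the core in the quotient already stabilizes at level $n$.

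First I would set up the quotient $*$-homomorphism $q : \F^{(\infty)} \to \F^{(\infty)}/\overline{J}^{(b,n)}$ and record what is already available. By the preceding lemma we have $\overline{J}^{(b,n)} \cap \F^{(n)} = J^{(b,n,n)}$. Moreover $\Pi^{(n)}$ is an isometric $*$-isomorphism onto its image, and it was noted in the construction of $J^{(b,n,n)}$ that $\Pi^{(n)}(\F^{(n)})/\Pi^{(n)}(J^{(b,n,n)}) \simeq M_{N^n}(\comp)$ (the fiber $\Pi^{(n)}(\F^{(n)})(b)$ is the full matrix algebra because $b \in B_{\gamma}$ forces $b \notin C_{\gamma^n}$ via Assumption B(3)); hence $\F^{(n)}/J^{(b,n,n)} \simeq M_{N^n}(\comp)$. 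Consequently $q|_{\F^{(n)}}$ has kernel exactly $J^{(b,n,n)}$, so $q(\F^{(n)}) \simeq M_{N^n}(\comp)$, and being finite-dimensional it is a closed $*$-subalgebra of the quotient.

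Next I would show that the image does not grow beyond level $n$. From the definition $J^{(b,n,m)} = J^{(b,n,n)} + \K(X_{\gamma}^{\otimes n+1}) + \cdots + \K(X_{\gamma}^{\otimes m})$ we have $\K(X_{\gamma}^{\otimes m}) \subset \overline{J}^{(b,n)}$ for every $m > n$, so $q$ annihilates all the higher compact ideals. Using $\F^{(m)} = \F^{(n)} + \K(X_{\gamma}^{\otimes n+1}) + \cdots + \K(X_{\gamma}^{\otimes m})$ for $m \ge n$, this gives $q(\F^{(m)}) = q(\F^{(n)})$ for all $m \ge n$. Since $\bigcup_{m \ge n}\F^{(m)}$ is dense in $\F^{(\infty)}$ and $q$ is continuous and surjective, $\F^{(\infty)}/\overline{J}^{(b,n)} = q(\F^{(\infty)}) = \overline{q(\F^{(n)})} = q(\F^{(n)}) \simeq M_{N^n}(\comp)$, the last equality holding because $q(\F^{(n)})$ is closed.

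Finally, primitivity follows by pulling back the standard representation: let $\rho$ be the identity representation of $M_{N^n}(\comp)$ on $\comp^{N^n}$, which is faithful and irreducible, and put $\pi = \rho \circ q$. Then $\pi$ is an irreducible representation of $\F^{(\infty)}$ with $\ker \pi = \ker q = \overline{J}^{(b,n)}$, so $\overline{J}^{(b,n)}$ is a primitive ideal. The step requiring genuine care, and the place where the singularity hypotheses actually enter, is the identity $\overline{J}^{(b,n)} \cap \F^{(n)} = J^{(b,n,n)}$: it is precisely this (established through the trace $\tau^{(b,n)}$, since $\overline{J}^{(b,n)} = \ker \tau^{(b,n)}$) that pins down $\ker(q|_{\F^{(n)}})$ and prevents $q(\F^{(n)})$ from being larger than $M_{N^n}(\comp)$, making the collapse of the higher levels effective.
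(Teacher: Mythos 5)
Your proposal is correct and follows essentially the same route as the paper: both rest on the previously established identity $\overline{J}^{(b,n)} \cap \F^{(n)} = J^{(b,n,n)}$, the isomorphism $\F^{(n)}/J^{(b,n,n)} \simeq M_{N^n}(\comp)$ via $\Pi^{(n)}$, and the stabilization $\F^{(m)}/\overline{J}^{(b,n)} \simeq \F^{(n)}/\overline{J}^{(b,n)}$ for $m \ge n$ coming from $\K(X_{\gamma}^{\otimes m}) \subset \overline{J}^{(b,n)}$. The only cosmetic differences are that you spell out the density/closedness step at the inductive limit and exhibit the irreducible representation $\rho \circ q$ directly, where the paper concludes via maximality; your parenthetical that Assumption B(3) forces $b \notin C_{\gamma^n}$, so the fiber at $b$ is full, correctly fills in a detail the paper leaves implicit.
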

\begin{proof}
The quotient $\F^{(n)}/J^{(b,n,n)}$ is isomorphic to
$\Pi^{(n)}(\F^{(n)})/\Pi^{(n)}(J^{(b,n,n)}) \simeq M_{N^n}(\comp)$. 
Since $\overline{J}^{(b,n)} \cap \F^{(n)} = J^{(b,n,n)}$, 
\[
\F^{(n)}/\overline{J}^{(b,n)} 
= (\F^{(n)} + \overline{J}^{(b,n)})/\overline{J}^{(b,n)} 
= (\F^{(n)}/ (\F^{(n)}\cap \overline{J}^{(b,n)}) 
= \F^{(n)}/J^{(b,n,n)} \simeq
  M_{N^n}(\comp).
\]
Then for $m$ with $n+1 \le m$, we have
\[
\F^{(m)}/\overline{J}^{(b,n)} = (\F^{(n)}+\K(X_{\gamma}^{\otimes 
n+1})+\cdots +
  \K(X_{\gamma}^{\otimes m}))/\overline{J}^{(b,n)}=
  \F^{(n)}/\overline{J}^{(b,n)} \simeq M_{N^n}(\comp).
\]
It follows that $\F^{(\infty)}/\overline{J}^{(b,n)} \simeq
M_{N^n}(\comp)$.  Therefore  $\overline{J}^{(b,n)}$ is a maximal ideal 
and also  a primitive ideal. 
\end{proof}

\begin{lemma}
Let $I$ be an ideal of $\F^{(\infty)}$.  Assume that 
$F_0^I$ coincides with $O_{b,n}$ for some  
$b \in B_{\gamma}$ and some $n = 0,1,2,\dots $. 
Then 
$F_{1}^I=O_{b,n-1}$, $F_{2}^I =
  O_{b,n-2}$ \dots, $F_{n}^I=O_{b,0}=\{b\}$ and $F_{m}^I=\emptyset$ for
  $m > n$.
Moreover,  
$I$ is equal to $\overline{J}^{(b,n)}$. 
\end{lemma}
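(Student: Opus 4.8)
The plan is to first pin down the whole family $(F_m^I)_m$ by transporting it along the dynamics, and only afterward to identify $I$ with the model ideal $\overline{J}^{(b,n)}$.

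First I would determine $F_m^I$ for $0\le m\le n$ by induction on $m$, the case $m=0$ being the hypothesis $F_0^I=O_{b,n}$. For the step I use the two transfer lemmas. Lemma \ref{lem:ideal1} gives $h^{-1}(F_{m+1}^I)\subseteq F_m^I=O_{b,n-m}$, and since $h$ is onto this yields $F_{m+1}^I\subseteq h(O_{b,n-m})=O_{b,n-m-1}$. For the reverse inclusion, take $x\in O_{b,n-m-1}$ and the point $a=\gamma_i(x)\in O_{b,n-m}$; as $n-m\ge 1$, $a$ lies in the forward orbit of $b$, which avoids $B_{\gamma}$ because $b\notin P_{\gamma}$ by Assumption B(3), so $a\notin B_{\gamma}$ and Lemma \ref{lem:converse} gives $h(a)=x\in F_{m+1}^I$. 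Hence $F_m^I=O_{b,n-m}$ for $0\le m\le n$; in particular $F_n^I=O_{b,0}=\{b\}$.

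Next I would show $F_m^I=\emptyset$ for $m>n$. Put $c=h(b)$. From $F_n^I=\{b\}$ and Lemma \ref{lem:ideal1}, any $x\in F_{n+1}^I$ satisfies $h^{-1}(x)\subseteq\{b\}$, so $\gamma_i(x)=b$ for every $i$ and $x=c$; thus $F_{n+1}^I\subseteq\{c\}$. Note $c\notin B_{\gamma}$, for otherwise $\gamma_i(c)=b\in B_{\gamma}$ would place $c$ in $P_{\gamma}\cap B_{\gamma}=\emptyset$. Now if $c\in F_{n+1}^I$, then since $c\notin B_{\gamma}$ Lemma \ref{lem:converse} gives $h(c)\in F_{n+2}^I$, whereupon Lemma \ref{lem:ideal1} forces $h^{-1}(h(c))\subseteq F_{n+1}^I=\{c\}$; but then $\gamma_1(h(c))=\gamma_2(h(c))=c$, making $c$ a branch point, a contradiction. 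So $F_{n+1}^I=\emptyset$, and Lemma \ref{lem:ideal1} propagates this upward to all $m>n$. In particular $I\cap\K(X_{\gamma}^{\otimes m})=\K(X_{\gamma}^{\otimes m})$ for every $m>n$.

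It remains to prove $I=\overline{J}^{(b,n)}$. The preceding lemmas give that $\overline{J}^{(b,n)}$ is maximal, with $\F^{(\infty)}/\overline{J}^{(b,n)}\cong M_{N^n}(\comp)$ and $\overline{J}^{(b,n)}\cap\F^{(n)}=J^{(b,n,n)}=\{T\in\F^{(n)}\mid \Pi^{(n)}(T)(b)=0\}$. As $F_0^I=O_{b,n}\ne\emptyset$, $I$ is proper, so by maximality it is enough to prove $\overline{J}^{(b,n)}\subseteq I$; the inclusions $\K(X_{\gamma}^{\otimes m})\subseteq I$ for $m>n$ are already in hand, so everything reduces to $J^{(b,n,n)}\subseteq I$, i.e. to $I\cap\F^{(n)}=J^{(b,n,n)}$. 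I would get this by an inner induction on the level, proving $I\cap\F^{(k)}=\{T\in\F^{(k)}\mid \Pi^{(k)}(T)|_{O_{b,n-k}}=0\}$: the base $k=0$ is just the meaning of $F_0^I=O_{b,n}$, and the step passes from $\F^{(k-1)}$ to $\F^{(k)}=\F^{(k-1)}\otimes I+\K(X_{\gamma}^{\otimes k})$ through the matrix representation $\Pi^{(k)}$, using that each $y\in O_{b,n-k}$ lies outside $C_{\gamma^{k}}$ (again because $b\notin P_{\gamma}$), so that the fibre $\Pi^{(k)}(\,\cdot\,)(y)$ is the full simple algebra $M_{N^k}(\comp)$ and the fibre of an ideal there is all or nothing.

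The hard part is exactly this last step, namely controlling $I\cap\F^{(n)}$ rather than merely $I\cap\K(X_{\gamma}^{\otimes n})$: since $\F^{(n)}$ is only the \emph{sum} $\F^{(n-1)}+\K(X_{\gamma}^{\otimes n})$ of a subalgebra and an ideal, an element of $I\cap\F^{(n)}$ need not split into summands lying in $I$, and the toy example $B=\comp^2$ recalled in the text shows this can genuinely fail. Here it is salvaged by the global picture: continuity of $\Pi^{(n)}(T)\in C(K,M_{N^n}(\comp))$ together with finiteness of $C_{\gamma}$ lets one excise the finitely many singular fibres by cut-off functions and reduce to the full-fibre points of $O_{b,n-k}$. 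I expect the subtle direction to be $I\cap\F^{(n)}\subseteq J^{(b,n,n)}$, which I would extract from the trace $\tau^{(b,n)}$ of the preceding lemma, whose kernel is $\overline{J}^{(b,n)}$, by verifying that $\tau^{(b,n)}$ annihilates $I$. The truly degenerate configuration to keep in view is $e_b=N$ (all branches collapsing at $c$, as in the tent map), which is precisely the case where the forward–backward argument of the second paragraph, and not a fibre-dimension count, is what rules out $c\in F_{n+1}^I$.
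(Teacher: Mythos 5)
Your proposal is correct, and at its core it runs on the same machinery as the paper's proof: the chain $F_m^I=O_{b,n-m}$ comes from the same two transfer lemmas (Lemmas \ref{lem:ideal1} and \ref{lem:converse}) plus Assumption B(3), and the identification of $I$ rests on the same excision mechanism --- cut-offs around the finitely many points of $O_{b,n-k}$, which avoid $C_{\gamma^k}$ because $b\notin P_{\gamma}$, so the localized piece lands in $D^{\gamma^k}=\Pi^{(k)}(\K(X_{\gamma}^{\otimes k}))$ and the Rieffel correspondence applies. Two differences are worth recording. First, your treatment of $F_m^I=\emptyset$ for $m>n$ is genuinely more careful than the paper's: the paper disposes of all $m>n$ with the one-line claim that $h^{-(m-n)}(z)$ contains more than one element, which is literally false for $m-n=1$ in the total-collapse case $e_b=N$ (for the tent map, $h^{-1}(1)=\{1/2\}$ is a singleton); your two-step argument --- $F_{n+1}^I\subseteq\{c\}$ with $c=h(b)\notin B_{\gamma}$, then $c\in F_{n+1}^I$ would give $h(c)\in F_{n+2}^I$ and hence $h^{-1}(h(c))\subseteq\{c\}$, making $c$ a branch point --- closes exactly this case, after which the preimage count is valid for $m-n\ge 2$ (a singleton $h^{-1}(z)=\{w\}$ forces $w\in B_{\gamma}$, and $h^{-1}(w)$ cannot collapse again without violating B(3)). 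Second, your endgame differs in organization: you invoke maximality of $\overline{J}^{(b,n)}$ in $\F^{(\infty)}$, so that only the inclusion $\overline{J}^{(b,n)}\subseteq I$, i.e.\ $J^{(b,n,n)}\subseteq I$, needs proof, whereas the paper computes $I\cap\F^{(m)}=J^{(b,n,n)}+\K(X_{\gamma}^{\otimes n+1})+\cdots+\K(X_{\gamma}^{\otimes m})$ for every $m$ and passes to the inductive limit; your shortcut is valid and cleaner, and your level-by-level induction $I\cap\F^{(k)}=\{T\in\F^{(k)}\mid \Pi^{(k)}(T)|_{O_{b,n-k}}=0\}$ is a reorganized form of the paper's single decomposition $T=\sum_{i}T_i$ at level $n$ with one bump at $b$, exploiting the same fibrewise identities. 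One small correction: the direction you flag as subtle, $I\cap\F^{(n)}\subseteq J^{(b,n,n)}$, is in fact the easy one (given $T\in I\cap\F^{(n)}$, pick $S\in\K(X_{\gamma}^{\otimes n})$ with $\Pi^{(n)}(S)(b)=1$, so $TS\in I\cap\K(X_{\gamma}^{\otimes n})$ and $F_n^I=\{b\}$ force $\Pi^{(n)}(T)(b)=0$), and it is anyway unnecessary once maximality is invoked; the proposed detour through the trace $\tau^{(b,n)}$ can simply be dropped.
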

\begin{proof}The case that $n = 0$ is trivial. 
We may assume that $F_0^I=O_{b,n}$ for some $n \geq 1$. 
Since any point in $O_{b,n} = h^{-n}(b)$ is not a branch point 
by Assumption B (3), 
$O_{b,n-1} \subset F_{1}^I$ by Lemma \ref{lem:converse}. 
Suppose that $O_{b,n-1} \not=  F_{1}^I$. Then 
$F_0^I$ contains an element which is not in $O_{b,n}$ by Lemma 
\ref{lem:ideal1}. This is a contradiction. Therefore 
$O_{b,n-1} =  F_{1}^I$. In a similar way, we have that 
 $F_{2}^I =
  O_{b,n-2}$ \dots, $F_{n}^I=O_{b,0}=\{b\}$. 
Therefore, by the form of matrix representation, we have that 
\begin{equation} \label{eq:ideal}
\begin{split}
  \Pi^{(n)}(I\cap A) =& \Omega^{(n,0)}(I \cap A)  = \{\,T \in \Pi^{(n)}(A)
  \,|\,T(b)=0\,\},  \\
  \Pi^{(n)}(I \cap \K(X_{\gamma}^{\otimes i})) 
= & \Omega^{(n,i)}(I \cap \K(X_{\gamma}^{\otimes i}))
   =  \{\,T \in \Pi^{(n)}(\K(X_{\gamma}^{\otimes
  i}))\,|\,T(b)=0\,\}, \quad i=1,\dots,n.
\end{split}
\end{equation}

For $m > n$, we shall show that $F_{m}^I=\emptyset$. 
On the contrary assume that $F_{m}^I \not=\emptyset$. 
Take $z$ in  $F_{m}^I$. Then $h^{-(m-n)}(z)$ contains 
more than one element by Assumption B (3). Then  
$h^{-(m-n)}(z) \subset F_{n}^I = \{b\}$ by Lemma \ref{lem:ideal1}. 
But this is a contradiction. Therefore $F_{m}^I=\emptyset$. 
By the Rieffel correspondence of ideals, this means that 
$I \cap \K(X_{\gamma}^{\otimes m}) = \K(X_{\gamma}^{\otimes m})$, 
that is, $I \supset \K(X_{\gamma}^{\otimes m})$ for $m > n$.

We shall show that $J^{(b,n,n)}=(I \cap A) + (I\cap \K(X_{\gamma}))+ \cdots +
  (I \cap \K(X_{\gamma}^{\otimes n}))$. 
 From \eqref{eq:ideal}, we have that 
$I \cap A
  \subset
  J^{(b,n,n)}$, $I \cap \K(X_{\gamma}^{\otimes i}) \subset J^{(b,n,n)}$
  $i=1,\dots,n$.  Therefore  $(I \cap A) + (I\cap \K(X_{\gamma}))+ \cdots +
  (I \cap \K(X_{\gamma}^{\otimes n}))\subset J^{(b,n,n)}$.
Conversely,  take $T \in J^{(b,n,n)}$.  Then we can write
$T = T_0+T_1+\cdots + T_n$ for some $T_0 \in  A$ and $T_i \in
\K(X_{\gamma}^{\otimes i})$, $i=1, \dots, n$.  
Since $b \notin C_{\gamma}$ by Assumption B,  there exists 
an open neighborhood $U(b)$ of $b$ such that 
$\overline{U(b)} \cap C_{\gamma} = \emptyset$. 
Hence  $\Pi^{(n)}(\K(X_{\gamma}^{\otimes n}))(x)$ is the total matrix algebra
  $M_{N^n}(\comp)$ for $x \in U(b)$.
We take $f \in A = C(K)$ such that $f(b)=1$ and $\supp(f)$ is contained in
  $U(b)$.  For $S \in M_{N^n}(A)$ and $f \in A$,
  we write $[(S\cdot f)_{p,q}]_{p,q}(x) = [S_{p,q}(x)f(x)]_{p,q}$.
Define $\beta \in \Aut\  A$ by $(\beta(f))(x) =f(h(x))$ for $x \in K$. 
Then 
\[
(\alpha_i \circ \beta (f))(x) = f(h(\gamma_i(x))) = f(x). 
\]
We note that it holds that $\Pi^{(n)}(T)\cdot f = \Pi^{(n)}(T
  \phi_n(\beta^n(f)))$, and that $T_i \phi_i(\beta^i(f)) \in
  \K(X_{\gamma}^{\otimes i})$ for $1 \le i \le n$. 
  Then we have
\begin{align*}
  \Pi^{(n)}(T)  = & \Pi^{(n)}(T_0)+\Pi^{(n)}(T_1)+\cdots + \Pi^{(n)}(T_n) \\
      = & \sum_{i=0}^{n} \Pi^{(n)}(T_i)\cdot (1-f)
       +  \sum_{i=0}^{n} \Pi^{(n)}(T_i) \cdot f.
\end{align*}
Since  $(\Pi^{(n)}(T_i)\cdot (1-f))(b)=0$,we have that 
  $T_i\phi_i(\beta^i(1-f))\in I \cap
  \K(X_{\gamma}^{\otimes i})$.  On the other hand, because $T$ 
is in $J^{(b,n,n)}$,  
$\sum_{i=0}^{n}(\Pi^{(n)}(T_i)\cdot f) (b) =
  \sum_{i=0}^{n}\Pi^{(n)}(T_i)(b)= 0$.
Since $\Pi^{(n)}(\K(X_{\gamma}^{\otimes n}))(x)$ is
the total of matrix algebra $M_{N^n}(\comp)$ for $x \in U(b)$
and $\supp f$ is contained in $U(b)$,
  $\sum_{i=0}^{n}\Pi^{(n)}(T_i)\cdot f$ is contained in
  $\Pi^{(n)}(\K(X_{\gamma}^{\otimes n}))$.  Thus 
  $\sum_{i=0}^{n}T_i\phi_i(\beta^i(f)) \in I \cap
  \K(X_{\gamma}^{\otimes n})$.  It follow that
$J^{(b,n,n)} \subset (I \cap A) + (I\cap \K(X_{\gamma}))+ \cdots +
  (I \cap \K(X_{\gamma}^{\otimes n}))$.

In general we have that
\begin{align*}
  I \cap \F^{(n)} = & I \cap (A + \K(X_{\gamma}) + \cdots +
  \K(X_{\gamma}^{\otimes n})) \\
                  \supset& (I \cap A) + (I\cap \K(X_{\gamma}))+ \cdots +
  (I \cap \K(X_{\gamma}^{\otimes n})).
\end{align*}
Hence it holds $I \cap \F^{(n)} \supset J^{(b,n,n)}$.
Since $J^{(b,n,n)}$ is a maximal ideal of $\F^{(n)}$, $I \cap \F^{(n)}$ is
  equal to $\F^{(n)}$ or $J^{(b,n,n)}$.  Since 
$F_{n}^I=O_{b,0}=\{b\}$, 
 $I \cap
  \K(X_{\gamma}^{\otimes
  n}) \ne
  \K(X_{\gamma}^{\otimes n})$. Hence there exists an element in $\F^{(n)}$
  which does
  not contained in $I$ and $I \cap \F^{(n)}$ is not equal to $\F^{(n)}$.
  Hence  $I \cap \F^{(n)} = J^{(b,n,n)}$.

We assume $m\ge n+1$.  Since $F_{m}^I=\emptyset$ for $m \ge
  n+1$, $\K(X_{\gamma}^{\otimes m})\subset I$.
It holds that
\begin{align*}
I \cap \F^{(m)} = &I \cap (\F^{(n)} + \K(X_{\gamma}^{\otimes n+1}) + 
\cdots +
\K(X_{\gamma}^{\otimes m})) \\
              \supset & I \cap \F^{(n)}  + \K(X_{\gamma}^{\otimes n+1}) +
  \cdots + \K(X_{\gamma}^{\otimes m}).
\end{align*}
On the other hand, $T \in I \cap \F^{(m)}$ is expressed as
\[
    T = T_1 + T_2,
\]
where $T_1 \in \F^{(n)}$, $T_2 \in \K(X_{\gamma}^{\otimes n+1}) + \cdots
+\K(X_{\gamma}^{\otimes m})\subset I$.  Since $T_1 = T -T_2 \in I$, it holds
$T_1\in I \cap \F^{(n)}$.  Therefore we have 
\begin{align*}
I \cap \F^{(m)} = & I \cap \F^{(n)}  + \K(X_{\gamma}^{\otimes r+1}) + 
\cdots +
\K(X_{\gamma}^{\otimes n})\\
  = & J^{(b,n,n)} +  \K(X_{\gamma}^{\otimes n+1}) + \cdots
  +\K(X_{\gamma}^{\otimes m}).
\end{align*}
Hence we have $I\cap \F^{(m)} = \overline{J}^{(b,n)} \cap \F^{(m)}$ for $m 
\ge n+1$,
then 
\[
I = \lim_{n\rightarrow \infty} I\cap \F^{(m)} 
= \lim_{n\rightarrow \infty} \overline{J}^{(b,n)} \cap \F^{(m)} 
= \overline{J}^{(b,n)}
\]
\end{proof}

\begin{lemma} \label{lem:finite}
  Let $I$ be an ideal of $\F^{(\infty)}$.  Assume that 
 $F_0^I$ is a finite 
union of finite $\gamma$-orbits of branch points, that is, 
\[ 
F_0^I=\bigcup_{b \in B'} \bigcup_{j=1}^{p_b}O_{b,r(b,j)}
\]
where $B'$ is a
  subset of $B_{\gamma}$, $p_b \in {\mathbb N}$ and $r(b,j) \in {\mathbb 
N}$ with $r(b,1)< \cdots < r(b,p_b)$.
Then $\F^{(\infty)}/I$ is a
finite dimensional \cst -algebra.
\end{lemma}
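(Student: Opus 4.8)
The plan is to collapse the inductive limit to a single finite stage and then recognise the resulting quotient as a subquotient of a finite direct sum of matrix algebras. First I would determine all the sets $F_m^I$. Put $R=\max_{b\in B',\,j}r(b,j)$, which is finite because $B'$ is finite. For $r(b,j)\ge 1$ every point of $O_{b,r(b,j)}$ is a non-branch point: indeed $b\notin P_\gamma$ by Assumption B(3), so no forward $\gamma$-iterate of $b$ lies in $B_\gamma$, i.e. $O_{b,k}\cap B_\gamma=\emptyset$ for $k\ge 1$. Hence Lemma \ref{lem:converse} pushes each layer down under $h$, giving $O_{b,r(b,j)-m}\subset F_m^I$ for $0\le m\le r(b,j)$, while Lemma \ref{lem:ideal1} forbids any extra points (a point of $F_m^I$ would have its whole $h^{-m}$-preimage inside the finite set $F_0^I$, which over-determines it), exactly as in the preceding single-orbit lemma. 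This yields $F_m^I=\bigcup_{b,j:\,r(b,j)\ge m}O_{b,r(b,j)-m}$; in particular each $F_m^I$ is finite and $F_m^I=\emptyset$ for $m>R$.

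By the Rieffel correspondence, $F_m^I=\emptyset$ forces $I\cap\K(X_\gamma^{\otimes m})=\K(X_\gamma^{\otimes m})$, so $\K(X_\gamma^{\otimes m})\subset I$ for every $m>R$. Writing $\F^{(m)}=\F^{(R)}\otimes I_{m-R}+\sum_{k=R+1}^{m}\K(X_\gamma^{\otimes k})$, all summands beyond $\F^{(R)}$ lie in $I$, so under the quotient map $q:\F^{(\infty)}\to\F^{(\infty)}/I$ the image $q(\F^{(m)})$ equals $q(\F^{(R)})$ for every $m>R$. Since $\bigcup_m\F^{(m)}$ is dense and $q(\F^{(R)})$, being the image of a $C^*$-algebra, is already closed, I get $\F^{(\infty)}/I=q(\F^{(R)})\cong\F^{(R)}/(I\cap\F^{(R)})$.

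It remains to prove this finite-stage quotient is finite dimensional. Let $P=\bigcup_{m=0}^{R}F_m^I$, a finite subset of $K$, and consider the $*$-homomorphism $\Phi:\F^{(R)}\to\bigoplus_{x\in P}M_{N^R}(\comp)$ given by $\Phi(T)=(\Pi^{(R)}(T)(x))_{x\in P}$, whose target is finite dimensional. If $\ker\Phi\subset I\cap\F^{(R)}$, then $\F^{(R)}/(I\cap\F^{(R)})$ is a quotient of $\F^{(R)}/\ker\Phi\cong\Phi(\F^{(R)})$, hence finite dimensional, and we are done. Equivalently, one identifies $I=\bigcap_{b\in B',\,j}\overline{J}^{(b,r(b,j))}$ with the finite intersection of the model primitive ideals, so that $\F^{(\infty)}/I$ embeds into $\bigoplus_{b,j}\F^{(\infty)}/\overline{J}^{(b,r(b,j))}=\bigoplus_{b,j}M_{N^{r(b,j)}}(\comp)$.

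The main obstacle is exactly this inclusion $\ker\Phi\subset I\cap\F^{(R)}$ (equivalently the identification of $I$ with that finite intersection). It cannot be read off fibrewise in a naive way: as the $\comp^2$ example in the text shows, $I\cap\F^{(R)}$ need not equal $\sum_i\bigl(I\cap\K(X_\gamma^{\otimes i})\bigr)$, so an element of $\ker\Phi$ must be shown to lie in $I$ directly. I would treat it by a partition-of-unity localisation around each branch orbit: for the finitely many points of $P$ choose functions $f$ peaked there on neighbourhoods meeting $C_\gamma$ only at the relevant branch value, split $T=T\phi_R(\beta^R(1-f))+T\phi_R(\beta^R(f))$ as in the single-orbit lemma, and use that the fibre $\Pi^{(R)}(\K(X_\gamma^{\otimes R}))(x)$ is the full matrix algebra $M_{N^R}(\comp)$ away from $C_\gamma$, so the peaked part lands in $\K(X_\gamma^{\otimes R})\cap I$ while the remaining part vanishes at the prescribed points. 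The finiteness of $B'$ and of each $r(b,j)$ turns this into a finite bookkeeping once the local step is carried out.
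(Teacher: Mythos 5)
Your tail-collapse step (showing $\K(X_\gamma^{\otimes m})\subset I$ for $m>R$ via the Rieffel correspondence and hence $\F^{(\infty)}/I\cong \F^{(R)}/(I\cap\F^{(R)})$) is exactly the paper's argument and is fine; likewise finiteness of the sets $F_m^I$ is harmless (iterating Lemma \ref{lem:ideal1} gives $F_m^I\subset h^m(F_0^I)$, which is finite, even though your exact orbit formula for $F_m^I$ is not fully justified: the ``over-determination'' step does not exclude points of the form $h^{m-r(b,j)}(b)$ when $m>r(b,j)$). The fatal problem is the step you yourself flag as the main obstacle: the inclusion $\ker\Phi\subset I\cap\F^{(R)}$ is \emph{false} whenever the depths $r(b,j)$ are mixed, so no peaking/localisation argument can rescue it. Concretely, take the tent map, $b=1/2$, and $I=\overline{J}^{(b,0)}\cap\overline{J}^{(b,1)}$, so that $F_0^I=O_{b,0}\cup O_{b,1}=\{1/2,1/4,3/4\}$, $F_1^I=\{1/2\}$, $R=1$ and $P=\{1/2,1/4,3/4\}$. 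Choose $T_1\in\K(X_\gamma)$ whose matrix function $M=\Omega^{(1)}(T_1)\in D^{\gamma}$ vanishes near $C_\gamma=\{1\}$ and equals $-I_2$ at the three points of $P$ (such $M$ exists since the $D^\gamma$-identifications are only imposed at $1$), and put $T=1+T_1$. Then $\Pi^{(1)}(T)=\mathrm{diag}(1,1)+M$ vanishes on $P$, so $T\in\ker\Phi$; but $\tau^{(b,0)}(T^*T)=1\ne 0$ because $\tau^{(b,0)}$ kills $\K(X_\gamma)$, so $T\notin\overline{J}^{(b,0)}\supset I$. The structural reason is that for $r<R$ the ideal $\overline{J}^{(b,r)}$ contains \emph{all} of $\K(X_\gamma^{\otimes m})$, $m>r$, and is cut out by vanishing of the level-$r$ partial sum at $b$ (a ``symbol'' condition, well defined since $A\cap\K(X_\gamma)=J_X$ vanishes on $B_\gamma$); this is not a pointwise condition on $\Pi^{(R)}(T)$ at any finite set of points. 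For the same reason your ``equivalently'' is not an equivalence: the identification $I=\bigcap_{b,j}\overline{J}^{(b,r(b,j))}$ is true and would finish the proof, but you do not prove it, and in the paper it is established \emph{after} this lemma, using it (via Lemma \ref{lem:nonprimitive}), so invoking it here would be circular in the paper's logical order.

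The paper's proof avoids this obstacle entirely by enlarging the evaluation set: with $r=\max r(b,p_b)$ and $B''=\{b\in B_\gamma\mid O_{b,r}\subset F_0^I\}$, it evaluates at $C_{\gamma^r}\cup B''$ rather than at $P$. The point is that if $\Pi^{(r)}(T)$ vanishes on all of $C_{\gamma^r}$, then it trivially satisfies the identification equations defining $D^{\gamma^r}=\Pi^{(r)}(\K(X_\gamma^{\otimes r}))$, so by injectivity of $\Pi^{(r)}$ the element $T$ lies in $\K(X_\gamma^{\otimes r})$ outright; the additional vanishing on $B''=F_r^I$ then puts $T$ into $I\cap\K(X_\gamma^{\otimes r})$ by the Rieffel correspondence. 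Thus the evaluation ideal $J_r^{B''}$ sits inside $I_r$ with no localisation argument at all, and $\F^{(r)}/I_r$ is a quotient of the finite-dimensional algebra $\Pi^{(r)}(\F^{(r)})/J_r^{B''}$. To repair your proof you would have to replace $P$ by a finite set containing $C_{\gamma^R}$ --- which is precisely the paper's device; evaluation at the $I$-related points alone cannot detect the lower-level symbol conditions, as the example above shows.
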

\begin{proof}
Put $r={\max}_{b \in B'}(r(b,p_b))$, and $I_r=I \cap
  \F^{(r)}$.  Let $B''=\{\,b \in B_{\gamma}\,|\,O_{b,r} \subset F_0^I\}$.
Then it holds that
\[
  \Pi^{(r)}(I_r) = \Pi^{(r)}(I \cap \F^{(r)}) \supset \Pi^{(r)}(I
  \cap \K(X_{\gamma}^{\otimes r})) = \{\,T \in \Pi^{(r)}(\K(X^{\otimes
  r})) \,|\,
T(b)=0 \text{ for } b \in B''\}.
\]
We put $J_r^{B''}=\{T \in \F^{(r)} |
\Pi^{(r)}(T)(x)=0 \text{ for each $x\in C_{\gamma^r}$, } \Pi^{(r)}(T)(y)=0
  \text{ for each $y \in B''$} \}$.
  Then it holds that $J_r^{B''} \subset \Pi^{(r)}(I_r)$.  Since
  $\Pi^{(r)}(\F^{(r)})/J_r^{B''}$ is the quotient by
  an ideal whose elements vanish at finite points, 
  $\Pi^{(r)}(\F^{(r)})/J_r^{B''}$ is finite
dimensional.  Therefore  $\F^{(r)}/I_r$ is also finite dimensional.
\par 
Since the closed subsets $F_n^I$ corresponding to $I \cap
  \K(X_{\gamma}^{\otimes
  n})$  $(n \ge r+1)$ are empty set,
we have $I \cap \F^{(n)} = I_r + \K(X_{\gamma}^{\otimes r+1}) + \cdots +
  \K(X_{\gamma}^{\otimes n})$, and we have $I = \overline{(I_r +
  \K(X_{\gamma}^{\otimes
  r+1}) + \cdots)}$.
$\F^{(r)}/I = \F^{(r)}/(\F^{(r)} \cap I)$ is equal to $\F^{(r)}/I_r$.
Since $\K(X_{\gamma}^{\otimes n)})$ $(n \ge r+1)$ are contained in  $I$,
it holds that $\F^{(n)}/I= (\F^{(r)} + \K(X_{\gamma}^{\otimes r+1}) +
  \cdots
  \K(X_{\gamma}^{\otimes n}))/I =\F^{(r)}/I_r$, and $\F^{(n)}/I$ is
  isomorphic to
$\F^{(r)}/I_r$ for each $n \ge r$.
 From these, $\F^{(\infty)}/I \simeq \F^{(r)}/I_r$ is a finite
  dimensional \cst -algebra.
\end{proof}

\begin{lemma} \label{lem:nonprimitive}
Let $I$ be an ideal of $\F^{(\infty)}$.  If $F_0^I$ contains
  more than one finite 
union of finite  $\gamma$-orbits of branch points, then
  $I$ is not a primitive ideal.
\end{lemma}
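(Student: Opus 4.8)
The plan is to pass to the quotient $\F^{(\infty)}/I$ and count its blocks. By Lemma~\ref{lem:finite}, the hypothesis that $F_0^I$ is a finite union of finite $\gamma$-orbits of branch points forces $\F^{(\infty)}/I$ to be finite dimensional, hence a finite direct sum $\bigoplus_{j=1}^{\ell}M_{d_j}(\comp)$ of matrix algebras. Such an algebra is primitive if and only if it is simple, i.e. if and only if $\ell=1$, and its primitive ideals are exactly the $\ell$ kernels of the block projections. Since $I$ is primitive precisely when $\{0\}$ is a primitive ideal of $\F^{(\infty)}/I$, it therefore suffices to exhibit two distinct primitive ideals of $\F^{(\infty)}$ containing $I$: this forces $\ell\ge 2$, so $\F^{(\infty)}/I$ is not simple and $I$ is not primitive. (Note that finite dimensionality is essential here, since in general a primitive C${}^*$-algebra may carry several distinct primitive ideals.)

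To produce them I would first use Proposition~\ref{prop:finite} to write $F_0^I=\bigcup_{i=1}^{k}O_{b_i,m_i}$ and then pass to an irredundant subfamily, so that the orbits are pairwise distinct; by the hypothesis of ``more than one orbit'' we have $k\ge 2$. One checks these distinct orbits are in fact disjoint: if $x\in O_{b_i,m_i}\cap O_{b_j,m_j}$ with $m_i\le m_j$, then $b_j=h^{m_j-m_i}(b_i)$, whence $O_{b_i,m_i}\subseteq O_{b_j,m_j}$, contradicting irredundancy. The candidate ideals are $\overline{J}^{(b_1,m_1)}$ and $\overline{J}^{(b_2,m_2)}$, which are primitive with $\F^{(\infty)}/\overline{J}^{(b_i,m_i)}\cong M_{N^{m_i}}(\comp)$ and are distinct, since their $C(K)$-parts correspond to the distinct closed sets $O_{b_1,m_1}$ and $O_{b_2,m_2}$.

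The key step, and the hard part, is to prove the containment $I\subseteq\overline{J}^{(b_i,m_i)}$. First I would propagate $O_{b_i,m_i}\subseteq F_0^I$ up the tower by Lemma~\ref{lem:converse}: for $s\ge 1$ every point of $O_{b_i,s}=h^{-s}(b_i)$ lies outside $B_\gamma$ (otherwise some forward image $\gamma_{\bf j}(b_i)$ of length $s\ge 1$ would be a branch point, forcing $b_i\in P_\gamma$ and contradicting $B_\gamma\cap P_\gamma=\emptyset$), so Lemma~\ref{lem:converse} applies repeatedly and yields $O_{b_i,m_i-t}\subseteq F_t^I$ for $0\le t\le m_i$, in particular $b_i\in F_{m_i}^I$; equivalently $F_t^{\overline{J}^{(b_i,m_i)}}\subseteq F_t^I$ for every $t$. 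Using the identity $\overline{J}^{(b_i,m_i)}=\ker\tau^{(b_i,m_i)}$, it then remains to show $\tau^{(b_i,m_i)}$ vanishes on $I$. Since $\tau^{(b_i,m_i)}$ annihilates $\K(X_{\gamma}^{\otimes m'})$ for every $m'>m_i$, a Cauchy--Schwarz argument with an approximate unit $(e_\lambda)$ of the closed ideal generated by these higher compacts (all lying in $\ker\tau^{(b_i,m_i)}$) gives $\tau^{(b_i,m_i)}(T)=\tau^{(b_i,m_i)}((1-e_\lambda)T(1-e_\lambda))$, reducing the computation to the level-$\le m_i$ behaviour; there the matrix-representation fibre description, together with the bump-function localization of Lemma~\ref{lem:converse} (available because $O_{b_i,m_i-t}\cap C_{\gamma^t}=\emptyset$, again by $B_\gamma\cap P_\gamma=\emptyset$), forces $\Pi^{(m_i)}(T)(b_i)=0$ for $T\in I\cap\F^{(m_i)}$, hence $\tau^{(b_i,m_i)}(T^*T)=0$. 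The delicate point throughout is precisely the control of the higher-level ($m'>m_i$) components of elements of $I$, which is why both the vanishing of $\tau^{(b_i,m_i)}$ on the higher compacts and the localization technique of the preceding lemma are indispensable.

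Finally, $\overline{J}^{(b_1,m_1)}$ and $\overline{J}^{(b_2,m_2)}$ are then two distinct primitive ideals containing $I$, so $\F^{(\infty)}/I$ has at least two distinct primitive ideals and $\ell\ge 2$; it is therefore not simple, not primitive, and consequently $I$ is not a primitive ideal.
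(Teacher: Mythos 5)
Your route is genuinely different from the paper's, and its skeleton is legitimate: the paper simply invokes Lemma~\ref{lem:finite} to get $\F^{(\infty)}/I \simeq \F^{(r)}/I_r$ finite dimensional and then observes that, since $F_0^I$ contains two distinct orbits, $I_r$ sits below two distinct evaluation-type maximal ideals of $\F^{(r)}$, so the quotient has more than one simple summand and cannot be primitive. You instead try to verify directly the containments $I \subseteq \overline{J}^{(b_i,m_i)}$ for two irredundant orbits --- a statement the paper only establishes \emph{later}, in the classification Proposition, as a consequence of this very lemma, so it is not available off the shelf and must indeed be proved from scratch as you attempt. Several of your steps are correct: the disjointness of irredundant orbits, the propagation $O_{b_i,m_i-t} \subseteq F_t^I$ for $0 \le t \le m_i$ (using $B_{\gamma} \cap P_{\gamma} = \emptyset$ to keep Lemma~\ref{lem:converse} applicable), and the level-$m_i$ localization: a bump function $f$ at $b_i$ with support missing the finite set $C_{\gamma^{m_i}}$ puts $T\phi_{m_i}(\beta^{m_i}(f))$ into $I \cap \K(X_{\gamma}^{\otimes m_i})$, whose image vanishes at $b_i \in F_{m_i}^I$, giving $\Pi^{(m_i)}(T)(b_i)=0$ for $T \in I \cap \F^{(m_i)}$.

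The genuine gap is the reduction from $I$ to $I \cap \F^{(m_i)}$. Your approximate-unit/Cauchy--Schwarz step only restates that $e_{\lambda} \in \ker \tau^{(b_i,m_i)}$: the identity $\tau^{(b_i,m_i)}(T) = \tau^{(b_i,m_i)}((1-e_{\lambda})T(1-e_{\lambda}))$ holds trivially for every $\lambda$, but the compressed element still lies in $\F^{(m)}$, not in $\F^{(m_i)}$, and it does not converge in norm to anything in $I \cap \F^{(m_i)}$. To see what is actually required, take $T \in I \cap \F^{(m)}$ with $m > m_i$ and write $T = T' + T''$ with $T' \in \F^{(m_i)}$ and $T'' \in \K(X_{\gamma}^{\otimes m_i+1}) + \cdots + \K(X_{\gamma}^{\otimes m})$; since the trace kills the higher compacts and the cross terms, one gets $\tau^{(b_i,m_i)}(T^*T) = \frac{1}{N^{m_i}}{\rm Tr}\bigl(\Pi^{(m_i)}(T')(b_i)^{*}\,\Pi^{(m_i)}(T')(b_i)\bigr)$, so what must be shown is $\Pi^{(m_i)}(T')(b_i)=0$. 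But $T' \notin I$ in general, so your localization argument cannot be applied to it. This is exactly the ``ideals of $B = A + L$ are not determined by $A$ and $L$ separately'' obstruction that the paper repeatedly emphasizes; in the single-orbit lemma the paper overcomes it with extra input ($F_m^I = \emptyset$ for $m>n$, and maximality of $J^{(b,n,n)}$ in $\F^{(n)}$) that is not available verbatim when $F_0^I$ is a union of several orbits. As written, the containment $I \subseteq \overline{J}^{(b_i,m_i)}$ is therefore unproved, and your block-counting conclusion does not follow; the paper's argument via Lemma~\ref{lem:finite} avoids needing this containment at all.
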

\begin{proof}
As in Lemma \ref{lem:finite}, we define an integer $r$ and a subset
  $B''$ of $B_{\gamma}$.  Then
$\F^{(\infty)}/I \simeq \F^{(r)}/I_r$.  If $F_0^I$ contains
  more than one finite $\gamma$-orbits of branch points,
$I_r$ is not of the form $\{\,T \in \Pi^{(r)}(T)(b)=0\,|\,
\text{ for } b \in B_{\gamma}\}$.
It is shown that $I$ is not a primitive ideal because $\F^{(\infty)}/I$
is finite dimensional and contains more than one simple components.
\end{proof}

\begin{prop} Let $I$ be an ideal of $\F^{(\infty)}$.  If $F_0 =
\bigcup_{b \in B'} \bigcup_{j=1}^{p_b}O_{b,r(b,j)}$ where $B'$ is a
  subset of $B_{\gamma}$, $p_b \in {\mathbb N}$ and $r(b,j) \in {\mathbb
  N}$ with
  $r(b,1)< \cdots < r(b,p_b)$, then $I = \bigcap_{b \in B'}
\bigcap_{j=1}^{p_b} \overline{J}^{(b,r(b,j))}$.
\end{prop}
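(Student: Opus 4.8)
The plan is to exploit that $\F^{(\infty)}/I$ is finite dimensional, so that $I$ is recovered as the intersection of the finitely many primitive ideals containing it, and then to identify each of these primitive ideals with one of the model ideals $\overline{J}^{(b,r(b,j))}$.

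First I would record that, since $F_0^I$ is a finite union of finite $\gamma$-orbits of branch points, Lemma \ref{lem:finite} gives $\F^{(\infty)}/I \cong \bigoplus_{l=1}^L M_{e_l}(\comp)$. In such a finite dimensional \cst-algebra the intersection of the maximal (equivalently, primitive) ideals is zero, so pulling back along the quotient map yields $I = \bigcap_{l=1}^L P_l$, where $P_1,\dots,P_L$ are the maximal ideals of $\F^{(\infty)}$ containing $I$; each $P_l$ is the kernel of the projection onto a simple summand, hence the kernel of an irreducible representation, i.e. a primitive ideal.

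Next I would identify each $P_l$ in terms of the singularity data. Since $P_l$ is proper we have $1 \notin P_l$, so $A \not\subseteq P_l$ and $F_0^{P_l} \ne \emptyset$; and $P_l \ne 0$ because the core contains $C(K)$ with $K$ uncountable, so $F_0^{P_l} \ne K$ and Proposition \ref{prop:finite} shows $F_0^{P_l}$ is a nonempty finite union of finite $\gamma$-orbits of branch points. As $P_l$ is primitive, the contrapositive of Lemma \ref{lem:nonprimitive} forces $F_0^{P_l}$ to be a single orbit $O_{b_l,m_l}$, and the lemma identifying $I$ with $\overline{J}^{(b,n)}$ in the one-orbit case then gives $P_l = \overline{J}^{(b_l,m_l)}$. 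Moreover $P_l \supseteq I$ implies $F_0^{P_l} \subseteq F_0^I$, i.e. $O_{b_l,m_l} \subseteq \bigcup_{b\in B'}\bigcup_{j} O_{b,r(b,j)}$.

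Finally I would match the two index sets. Using the description of the closed set attached to an intersection of ideals under the Rieffel correspondence (Lemma \ref{lem:Morita}), we get $\bigcup_{l} O_{b_l,m_l} = F_0^{\bigcap_l P_l} = F_0^I = \bigcup_{b\in B'}\bigcup_j O_{b,r(b,j)}$. The crucial point, and the step I expect to be the main obstacle, is that the orbit sets $O_{b,m}$ ($b \in B_\gamma$, $m \ge 0$) are pairwise disjoint: if $x \in O_{b,m}\cap O_{b',m'}$ with $m \le m'$ then $b' = h^{m'-m}(b)$, and when $m' > m$ this writes $b = \gamma_{\bf i}(b')$ for some ${\bf i}\in\Sigma^{m'-m}$, so $b' \in B_\gamma \cap P_\gamma = \emptyset$ by Assumption B(3), a contradiction; hence $(b,m)=(b',m')$. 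Granting disjointness, any nonempty orbit set contained in a disjoint union of orbit sets must equal one of them, so the two disjoint decompositions above have identical index sets, $\{(b_l,m_l)\}_l = \{(b,r(b,j)) : b\in B',\ 1\le j \le p_b\}$. Substituting into $I = \bigcap_l P_l = \bigcap_l \overline{J}^{(b_l,m_l)}$ gives $I = \bigcap_{b\in B'}\bigcap_{j=1}^{p_b}\overline{J}^{(b,r(b,j))}$, as claimed.
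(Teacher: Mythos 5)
Your proof is correct, and its skeleton is the same as the paper's: realize $I$ as an intersection of primitive ideals containing it, then identify each such primitive ideal with a model ideal $\overline{J}^{(b,n)}$ by combining Proposition \ref{prop:finite}, the contrapositive of Lemma \ref{lem:nonprimitive}, and the lemma computing any ideal with single-orbit $F_0$ as $\overline{J}^{(b,n)}$. It differs in two worthwhile ways. First, where the paper invokes the general fact that every closed ideal of a \cst -algebra is the intersection of all primitive ideals containing it, you derive a \emph{finite} intersection of maximal ideals directly from Lemma \ref{lem:finite}: since $\F^{(\infty)}/I \cong \bigoplus_{l=1}^{L} M_{e_l}(\comp)$, the ideal $I$ is the intersection of the pullbacks $P_l$ of the kernels of the coordinate projections; this is more elementary and keeps the whole argument finite. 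Second, and more substantively, you make explicit what the paper compresses into the single line ``If $I \subset \overline{J}^{(b,n)}$ then $(b,n) \in \dots$'': to conclude that the primitive ideals containing $I$ are indexed exactly by the given decomposition of $F_0^I$, one must match two orbit decompositions, and your disjointness argument --- $O_{b,m} \cap O_{b',m'} \ne \emptyset$ with $m < m'$ would give $b = \gamma_{\bf i}(b')$ and hence $b' \in B_{\gamma} \cap P_{\gamma} = \emptyset$, contradicting Assumption B(3) --- combined with $F_0^{\bigcap_l P_l} = \bigcup_l F_0^{P_l}$ (the hull of a finite intersection of ideals of $C(K)$ is the union of hulls) is precisely what legitimizes that matching; this fills a genuine gap in the paper's terse final step. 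One cosmetic remark: your justification that $F_0^{P_l} \ne K$ is stated in compressed form; the cleanest route is to note $F_0^{P_l} \subseteq F_0^{I}$ (from $I \subseteq P_l$), and $F_0^{I}$ is a finite set while $K$ is uncountable, so Proposition \ref{prop:finite} applies to each $P_l$.
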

\begin{proof}
Let $I$ be an ideal of $\F^{(\infty)}$ with $I \ne \F^{(\infty)}$ and
$I \ne \{0\}$.
By Proposition \ref{prop:finite}, the closed subset $F^I_0$
  corresponding to $I$ consists of finite union of finite 
 $\gamma$-orbits of
  branch points.  We note that each ideal of \cst -algebra is expressed
  by the intersection of primitive ideals which contain the original
  ideal.  Let
  $J$ be a primitive ideal of $\F^{(\infty)}$ which contains $I$.  Since
  $I|_A \subset J|_A$, $F^J_0$ is a finite union of $n$-th
  $\gamma$-orbits of branch points which appear in $F^{I}_0$.
But if $F^J_0$ contains more than one finite union of finite
 $\gamma$-orbits of branch
  points, $J$ is not primitive by Lemma \ref{lem:nonprimitive}. 
Therefore $J$
  must be the form $\overline{J}^{b,n}$.  If $I \subset
  \overline{J}^{b,n}$ then $(b,n) \in \bigcup_{b \in B'}
  \bigcup_{j=1}^{p_b}O_{b,r(b,j)}$.  It holds that $I = \bigcap_{b \in
  B'}\bigcap_{j=1}^{p_b} \overline{J}^{(b,r(b,j))}$.
\end{proof}

The following Proposition can be obtained by general theory.  But we can
give a simple proof using matrix representation of the core.

\begin{prop}
The von Neumann algebra generated by the image of the GNS representation of
the traces corresponding to the Hutchinson measure is the injective
  II${}_1$-factor.
\end{prop}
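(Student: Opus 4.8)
The plan is to realize the trace attached to the Hutchinson measure explicitly through the matrix representation $\Pi^{(n)}$, and then to read off the structure of the generated von Neumann algebra from the fibered description of the $\F^{(n)}$. Let $\mu$ be the Hutchinson measure, i.e.\ the unique Borel probability measure on $K$ with $\mu=\tfrac1N\sum_{i=1}^N(\gamma_i)_*\mu$; recall that $\mu$ has full support and no atoms. On $\F^{(n)}$ I define
\[
\tau(T)=\frac{1}{N^n}\int_K \mathrm{Tr}\big(\Pi^{(n)}(T)(x)\big)\,d\mu(x),
\]
where $\mathrm{Tr}$ is the unnormalized trace on $M_{N^n}(\comp)$. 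First I would check that this is consistent with the inclusion $\F^{(n)}\hookrightarrow\F^{(n+1)}$, $T\mapsto T\otimes I$: since $\Pi^{(n+1)}(T\otimes I)=\diag(\alpha_i(\Pi^{(n)}(T)))_{i\in\Sigma}$, taking the matrix trace gives $\sum_i \mathrm{Tr}(\Pi^{(n)}(T))\circ\gamma_i$, and the self-similarity $\tfrac1N\sum_i(\gamma_i)_*\mu=\mu$ absorbs the extra factor $N$, so that $\tau$ at level $n+1$ restricts to $\tau$ at level $n$. Thus $\tau$ is a well defined state on $\bigcup_n\F^{(n)}$ extending to $\F^{(\infty)}$; it is tracial because $\mathrm{Tr}$ is, and it is faithful because $\Pi^{(n)}(T)$ is continuous, $\supp\mu=K$ and $\Pi^{(n)}$ is isometric.

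Next I would pass to the GNS data $(\pi_\tau,H_\tau,\xi_\tau)$ and set $M=\pi_\tau(\F^{(\infty)})''$ and $M_n=\pi_\tau(\F^{(n)})''$, so that $M=\overline{\bigcup_n M_n}^{\,w}$. The key structural input is Lemma \ref{lem:finite-rank}: $\K(X_\gamma^{\otimes n})=D^{\gamma^n}$ contains \emph{every} matrix function vanishing on the finite set $C_{\gamma^n}$. As $\mu$ is nonatomic, such functions are $L^2(K,\mu)$-dense in $M_{N^n}(C(K))$, so $\pi_\tau(\F^{(n)})$ is weakly dense in $L^\infty(K,\mu)\,\overline{\otimes}\,M_{N^n}(\comp)$; since $\xi_\tau$ is separating, this identifies
\[
M_n\;\cong\;L^\infty(K,\mu)\,\overline{\otimes}\,M_{N^n}(\comp)
\]
carrying the trace $\int(\cdot)\,d\mu\otimes\tfrac1{N^n}\mathrm{Tr}$. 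Each $M_n$ is type I, hence injective, so the weak closure $M$ of their increasing union is injective. Moreover $M$ has the faithful normal tracial state $\tau$, so it is finite; the trace takes the values $N^{-n}\to0$ on projections, so $M$ has no minimal projection and is infinite dimensional. Hence, once factoriality is established, $M$ is an injective $\mathrm{II}_1$-factor, and (separable predual) it is \emph{the} injective $\mathrm{II}_1$-factor by Connes' uniqueness theorem.

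The remaining and central point is that $M$ is a factor. Using the trace-preserving conditional expectations $E_n\colon M\to M_n$, any $z\in Z(M)$ satisfies $E_n(z)\in Z(M_n)=\{\,g\cdot I_{N^n}:g\in L^\infty(K,\mu)\,\}$. Under the embeddings $T\mapsto\diag(\alpha_i(T))_i$ these scalar-times-identity functions already lie in $M_0=\pi_\tau(A)''\cong L^\infty(K,\mu)$ (taking the preimage $g\circ h^n$); since $E_n(z)\to z$ strongly, we get $z\in M_0$, say $z\leftrightarrow a\in L^\infty(K,\mu)$. At level $n$ the element $z$ is represented by the diagonal function $x\mapsto\diag\big(a(\gamma_{\bf j}(x))\big)_{{\bf j}\in\Sigma^{n}}$, and requiring $z$ to commute with the constant matrices $M_{N^n}(\comp)\subset M_n$ forces this diagonal to be scalar a.e., i.e.\ $a\circ\gamma_{\bf j}=a\circ\gamma_{\bf j'}$ for all ${\bf j},{\bf j}'\in\Sigma^n$, equivalently $a=a_n\circ h^n$ for some $a_n\in L^\infty(K,\mu)$. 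As this holds for every $n$, the function $a$ is measurable with respect to the tail $\sigma$-algebra $\bigcap_k h^{-k}(\mathcal B_K)$. Coding $K$ by the full one-sided shift on $\Sigma$ with the uniform Bernoulli measure (whose image is $\mu$ and for which $h$ is the shift), Kolmogorov's $0$--$1$ law shows this tail algebra is $\mu$-trivial; hence $a$ is a.e.\ constant and $Z(M)=\comp$.

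The main obstacle is precisely this last step: the passage from $a=a_n\circ h^n$ for all $n$ to $a$ being constant, that is, the exactness (tail-triviality) of the system $(K,h,\mu)$. Everything else is essentially bookkeeping with the matrix representation, whereas here one genuinely needs the properties of $\mu$ — nonatomicity, full support, and the Bernoulli coding behind Lemma \ref{lem:dense} — to exclude nontrivial central elements. A secondary technical care point is to verify that the inclusions $M_n\hookrightarrow M_{n+1}$ are correctly described by the block-diagonal maps $T\mapsto\diag(\alpha_i(T))_i$ coming from $\Omega^{(n+1,n)}$, since the whole center computation hinges on this explicit form.
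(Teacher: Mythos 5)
Your proof is correct, but it takes a genuinely different route from the paper's. The paper embeds $\F^{(\infty)}\simeq\O_{Z_{\gamma}}^{\mathbb T}$ into the core $\O_{Y_{\gamma}}^{\mathbb T}$ of the \emph{free} module $Y_{\gamma}$, observes that contractivity of the $\gamma_i$ forces every element of that core to be approximated \emph{in norm} by constant matrices, so that $\O_{Y_{\gamma}}^{\mathbb T}$ is the UHF algebra $M_{N^{\infty}}(\comp)$ and $\tau^{\infty}$ is its unique trace; nonatomicity of the Hutchinson measure is then used exactly once, to show that $\F^{(\infty)}$ is $\|\cdot\|_{2,\tau^{\infty}}$-dense in $M_{N^{\infty}}(\comp)$ (the defect of $D^{\gamma^n}$ inside $M_{N^n}(A)$ is concentrated on the finite, hence $\mu$-null, sets $C_{\gamma^n}$), whence the two GNS representations generate the same von Neumann algebra --- factoriality and injectivity are inherited wholesale from the UHF picture, with no center computation at all. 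You instead stay inside $\F^{(\infty)}$: you build the trace directly (your consistency check via $\mu=\tfrac1N\sum_i(\gamma_i)_*\mu$ is the right one), identify $M_n\cong L^{\infty}(K,\mu)\,\overline{\otimes}\,M_{N^n}(\comp)$ using the same null-set observation, get injectivity from the type I levels, and prove factoriality by hand via the martingale $E_n(z)\to z$ and tail-triviality of $(K,h,\mu)$ under Bernoulli coding. Both arguments are sound; the paper's is shorter because uniqueness of the UHF trace replaces your entire center computation, while yours is more self-contained at the von Neumann level and proves, as a byproduct, exactness of the system $(K,h,\mu)$ --- at the cost of invoking Connes' uniqueness theorem to identify the factor, where the paper lands in the hyperfinite II$_1$ factor directly. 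Two small points you should nail down: the map $g\mapsto g\circ h^n$ is well defined on $L^{\infty}(K,\mu)$-classes because $h$ in fact \emph{preserves} $\mu$ --- from $h\circ\gamma_j=\mathrm{id}$ one gets $\mu(h^{-1}(E))=\tfrac1N\sum_j\mu(\gamma_j^{-1}(h^{-1}(E)))=\tfrac1N\sum_j\mu(E)=\mu(E)$ --- and the coding claims (the address map $\pi$ with $\pi_*\nu=\mu$ and $h\circ\pi=\pi\circ\sigma$, so that $a=g_n\circ h^n$ for all $n$ yields tail-measurability of $a\circ\pi$) deserve a line of justification, though both are standard. Your ``secondary care point'' about the inclusions being given by $T\mapsto\diag(\alpha_i(T))_{i\in\Sigma}$ is exactly the content of the paper's lemma identifying $T\mapsto T\otimes I_{n-r}$ with the block-diagonal embedding, so it is already available.
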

\begin{proof}
Since $Y_{\gamma}$ is a free module over $C(K)$, 
the fixed point algebra  $\O_{Y_{\gamma}}^{\mathbb T}$ by the gauge action 
contains  UHF \cst-algebra
  $M_{N^{\infty}}(\comp)$.  
Moreover $\O_{Y_{\gamma}}^{\mathbb T}$ includes $\F^{(\infty)} \simeq
\O_{Z_{\gamma}}^{\mathbb T}$.   We denote
by $\tau^{\infty}$ the trace on $\O_{Y_{\gamma}}^{\mathbb T}$ given by the
Hutchinson measure on $K$.  Let $x$ be an operator in the finite core of
$\O_{Y_{\gamma}}^{\mathbb T}$.  Then $x$ is approximated by a sequence
$\{y_n\}_{n=1,2,\dots}$ in $M_{N^{\infty}}(\comp)$ by norm topology.  It
follows that $\O_{Y_{\gamma}}^{\mathbb T}$ is equal to 
$M_{N^{\infty}}(\comp)$,
and $\tau^{\infty}$ is the unique tracial  state on
  $M_{N^{\infty}}(\comp)$, see  section 4.2.of  \cite{PWY} 
for the details. 
The GNS representation by $\tau^{\infty}$
  generates $II_1$-factor.  On the other hand, for $x \in
  \O_{Y_{\gamma}}^{\mathbb T}$, there exist a bounded sequence
  $\{x_n\}_{n=1,2,\dots}$ in $\O_{Z_{\gamma}}^{\mathbb  T}$ with
  $\tau^{\infty}((x_n - x)^*(x_n - x)) \to 0$ because $\tau^{\infty}$ is
  constructed using normalized traces on Matrix algebras and the
  Hutchinson measure, which has no point masses.  Therefore the von Neumann
  algebra generated by the trace $\tau^{\infty}$ on $\F^{(\infty)}$
  generates the injective $II_1$-factor.
\end{proof}

The following is the main theorem of the paper, which 
gives a complete classification of the ideals of the core
of the \cst -algebras associated with self-similar maps. 

\begin{thm} Let $\gamma=(\gamma_1,\dots,\gamma_N)$ be a self-similar map 
on a compact set $K$  with $N \geq 2$. Assume that $\gamma$ satisfies 
Assumption  B. Let $\F^{(\infty)}$ be the core of the \cst-algebras 
$\O_{\gamma}$ associated with a self-similar map $\gamma$. Then 
any ideal $I$ of the core $\F^{(\infty)}$ is completely 
determined by the intersection $I \cap C(K)$ 
with the coefficient algebra 
$C(K)$ of the self-similar set $K$. The set $\mathcal S$ of all 
corresponding closed subsets $F_0^I$ of $K$, which arise in this way,  is 
described by the singularity structure 
of the self-similar maps as follows: 
\[
\mathcal S = \{ \ \emptyset,\ K, \ 
\bigcup_{b \in B'}\bigcup_{j=1}^{p_b}O_{b,r(b,j)} \  | \ 
B' \subset B_{\gamma}, \  p_b \in {\mathbb N}, \ r(b,j) = 0,1,2,\dots  \ \}
\]
The corresponding ideals for the closed subsets 
$\ \emptyset$,\ $K$  and 
$\bigcup_{b \in B'}\bigcup_{j=1}^{p_b}O_{b,r(b,j)} \ $  
are $ \F^{(\infty)}$, \ $0$,  and 
       $\ \bigcap_{b \in B'}\bigcap_{j=1}^{p_b}\overline{J}^{b,r(b,j)}$ 
respectively. 
\end{thm}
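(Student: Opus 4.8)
The plan is to show that the assignment $I \mapsto F_0^I$, sending an ideal of $\F^{(\infty)}$ to the closed subset of $K$ determined by $I \cap C(K)$, is a bijection onto $\mathcal S$, and to exhibit its inverse explicitly; injectivity of this assignment is precisely the assertion that $I$ is completely determined by $I \cap C(K)$. First I would dispose of the two extreme cases. Since $C(K)$ is unital, $I \cap C(K) = C(K)$ forces $I = \F^{(\infty)}$, so $F_0^I = \emptyset$ corresponds bijectively to $I = \F^{(\infty)}$; and the lemma showing $F_0^I = K \Rightarrow I = 0$, together with its trivial converse, handles the correspondence $F_0^I = K \leftrightarrow I = 0$.

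For a proper nonzero ideal $I$, so that $\emptyset \ne F_0^I \ne K$, I would invoke Proposition \ref{prop:finite} to conclude that $F_0^I$ is a finite union of finite $\gamma$-orbits of branch points. Collecting these orbits according to the branch point they emanate from and ordering the levels, I can write $F_0^I = \bigcup_{b \in B'}\bigcup_{j=1}^{p_b} O_{b,r(b,j)}$ with $B' \subset B_{\gamma}$ and $0 \le r(b,1) < \cdots < r(b,p_b)$. The preceding Proposition then yields $I = \bigcap_{b \in B'}\bigcap_{j=1}^{p_b} \overline{J}^{(b,r(b,j))}$. This formula reconstructs $I$ from $F_0^I$ alone, proving that $I$ is determined by $I \cap C(K)$ and hence that $I \mapsto F_0^I$ is injective.

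It then remains to identify the image $\mathcal S$. By the previous paragraph every $F_0^I$ lies in the displayed list, so I only need surjectivity, namely that each member of $\mathcal S$ is realized. The sets $\emptyset$ and $K$ are realized by $\F^{(\infty)}$ and $\{0\}$. For a finite union $F = \bigcup_{b \in B'}\bigcup_{j=1}^{p_b} O_{b,r(b,j)}$ I would set $I = \bigcap_{b \in B'}\bigcap_{j=1}^{p_b} \overline{J}^{(b,r(b,j))}$ and check $F_0^I = F$. Here the key point is that the Rieffel correspondence is a lattice isomorphism: an intersection of ideals of $\F^{(\infty)}$ meets $C(K)$ in the intersection of the corresponding ideals of $C(K)$, and an intersection of vanishing ideals in $C(K)$ is the vanishing ideal of the union of the closed sets. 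Since $\overline{J}^{(b,n)} \cap C(K)$ corresponds to $O_{b,n}$, this gives $F_0^I = \bigcup_{b \in B'}\bigcup_{j=1}^{p_b} O_{b,r(b,j)} = F$.

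The genuine difficulty of the whole development lies not in this final assembly but in the lemmas it quotes: the reconstruction $I = \bigcap \overline{J}^{(b,r(b,j))}$ rests on the matrix representation $\Pi^{(n)}$ and on identifying the model ideals $\overline{J}^{(b,n)}$ as kernels of the explicitly constructed traces, which in turn required the trace extension lemma and the finiteness of the branch set. Within the present proof the only subtle verification is the compatibility of the Rieffel correspondence with intersections and unions, which I would state carefully, since the naive identity $I \cap (A+L) = (I \cap A) + (I \cap L)$ fails in general, as the paper's own $\mathbb{C}^2$ example shows; it is precisely the matrix model over $C(K)$ that rescues us.
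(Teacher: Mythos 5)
Your proposal is correct and follows essentially the same route as the paper: the two extreme cases, the finiteness of $F_0^I$ as a finite union of finite $\gamma$-orbits of branch points, and the reconstruction $I = \bigcap_{b \in B'}\bigcap_{j=1}^{p_b}\overline{J}^{(b,r(b,j))}$ via the model primitive ideals are exactly the paper's chain of lemmas and propositions, of which the theorem is the assembly. Your one addition, the explicit surjectivity check that every set in $\mathcal S$ is realized, is left implicit in the paper (it is built into the construction of $\overline{J}^{(b,n)}$, whose intersection with $C(K)$ is the vanishing ideal of $O_{b,n}$ by $\overline{J}^{(b,n)}\cap \F^{(n)} = J^{(b,n,n)}$ and the diagonal form of $\Pi^{(n)}$ on $A$); note that this step needs only the trivial identity $\bigl(\bigcap_k I_k\bigr)\cap C(K) = \bigcap_k \bigl(I_k \cap C(K)\bigr)$ together with hull--kernel duality in $C(K)$, not the Rieffel lattice isomorphism, since $F_0^I$ is the hull of $I \cap C(K)$ directly.
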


\begin{cor} \label{prop:primitive}
Let ${\rm Prim}(\F^{(\infty)})$  be the primitive ideal space, i.e. 
the set of primitive ideals of the 
core $\F^{(\infty)}$. Then  
\[
{\rm Prim}(\F^{(\infty)})= \{ 0, \overline{J}^{(b,n)} \ | \ 
b \in B_{\gamma}, \ \ n = 0,1,2,\dots  \} 
\]
The Jacobson topology on ${\rm Prim}(\F^{(\infty)})$ 
is given by the co-finite sets and empty set. Moreover, 
\begin{enumerate}
  \item The zero ideal $0$ is the kernel of
        continuous trace $\tau^{\infty}$ and the GNS representation 
 of the trace generates the injective
        $II_1$-factor representation.
  \item The ideal $\overline{J}^{(b,n)}$ is the 
        kernel of the discrete trace $\tau^{(b,n)}$ 
and the GNS representation 
 of the trace generates 
       the finite factor $M_{N^n}(\comp)$ which is  isomorphic to 
        $\F^{(\infty)}/\overline{J}^{(b,n)}$. 
        
\end{enumerate}
\end{cor}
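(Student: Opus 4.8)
The plan is to read everything off the classification theorem, which already lists every ideal of $\F^{(\infty)}$ together with its invariant $F_0^I$, and then to (i) isolate which of these ideals are primitive, (ii) compute the hull--kernel topology on the resulting set, and (iii) match each primitive ideal with the trace whose GNS representation realises its quotient. Throughout I use that $\F^{(\infty)}$ is separable, being the inductive limit of the $\F^{(n)}$, each of which embeds via $\Pi^{(n)}$ into $M_{N^n}(C(K))$ with $C(K)$ separable; hence Dixmier's theorem applies and prime ideals coincide with primitive ideals.

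First I would identify ${\rm Prim}(\F^{(\infty)})$. By the classification theorem the proper nonzero ideals are exactly the finite intersections $\bigcap_{b\in B'}\bigcap_{j=1}^{p_b}\overline{J}^{(b,r(b,j))}$, together with the zero ideal (the case $F_0^I=K$); the whole algebra $\F^{(\infty)}$ (the case $F_0^I=\emptyset$) is not a proper ideal and hence not primitive. Each $\overline{J}^{(b,n)}$ is primitive, since it is maximal with simple quotient $\F^{(\infty)}/\overline{J}^{(b,n)}\simeq M_{N^n}(\comp)$ and is thus the kernel of the irreducible representation on $\comp^{N^n}$. Any ideal whose $F_0^I$ is a union of at least two finite $\gamma$-orbits fails to be primitive by Lemma \ref{lem:nonprimitive}, its quotient being finite dimensional with several simple summands. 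Finally $0$ is prime: if $I_1,I_2$ are nonzero ideals then $F_0^{I_1},F_0^{I_2}\ne K$, while $I_1\cap I_2$ has invariant $F_0^{I_1}\cup F_0^{I_2}$, still a finite union of finite $\gamma$-orbits and hence a proper subset of the uncountable set $K$, so $I_1\cap I_2\ne 0$; equivalently $0=\ker\tau^{\infty}=\ker\pi_{\tau^{\infty}}$ is the kernel of a factor representation. By separability $0$ is therefore primitive, giving ${\rm Prim}(\F^{(\infty)})=\{0\}\cup\{\overline{J}^{(b,n)}\mid b\in B_{\gamma},\ n\ge 0\}$.

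Next I would compute the Jacobson topology, where the closed set attached to an ideal $L$ is ${\rm hull}(L)=\{P\in{\rm Prim}\mid P\supseteq L\}$. Since ${\rm hull}(0)={\rm Prim}$ and every ideal contains $0$, the point $0$ is dense and lies in no proper closed set, so dually every nonempty open set contains $0$. Each $\overline{J}^{(b,n)}$ is maximal, hence a closed point. For a finite family, the quotient $\F^{(\infty)}/\bigcap_i\overline{J}^{(b_i,n_i)}$ is finite dimensional by Lemma \ref{lem:finite}, and by comaximality of the distinct maximal ideals it is $\bigoplus_i M_{N^{n_i}}(\comp)$; the primitive ideals of $\F^{(\infty)}$ containing $\bigcap_i\overline{J}^{(b_i,n_i)}$ correspond to the summand kernels, so ${\rm hull}\big(\bigcap_i\overline{J}^{(b_i,n_i)}\big)=\{\overline{J}^{(b_i,n_i)}\}$, precisely that finite set. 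Because every ideal has invariant in $\mathcal S$, i.e.\ is $0$, $\F^{(\infty)}$, or such a finite intersection, the only closed subsets of ${\rm Prim}$ are $\emptyset$, the finite subsets of $\{\overline{J}^{(b,n)}\}$, and ${\rm Prim}$ itself; dually the open sets are the empty set together with the co-finite sets, each of which contains the dense generic point $0$.

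Finally I would record the trace and factor statements. For (1), the Hutchinson measure has full support, so $\tau^{\infty}$ is faithful, $\ker\tau^{\infty}=0$, and via $\ker\tau^{\infty}=\ker\pi_{\tau^{\infty}}$ the GNS representation is faithful; the earlier Proposition gives that $\pi_{\tau^{\infty}}(\F^{(\infty)})''$ is the injective $II_1$-factor. For (2), the earlier lemma gives $\ker\tau^{(b,n)}=\overline{J}^{(b,n)}=\ker\pi_{\tau^{(b,n)}}$, so $\pi_{\tau^{(b,n)}}$ factors through and is faithful on $\F^{(\infty)}/\overline{J}^{(b,n)}\simeq M_{N^n}(\comp)$; as this quotient is finite dimensional it is already weakly closed, whence $\pi_{\tau^{(b,n)}}(\F^{(\infty)})''\simeq M_{N^n}(\comp)$, a finite type $I_{N^n}$ factor. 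The step I expect to be the main obstacle is the topology computation: one must verify that the hull of a finite intersection of the $\overline{J}^{(b_i,n_i)}$ captures no extra primitive ideals and correctly account for the dense generic point $0$, so that the proper closed sets are exactly the finite subsets of the closed points; the remaining identifications follow directly from the already-established lemmas.
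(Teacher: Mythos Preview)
Your proof is correct and follows exactly the approach the paper intends: the corollary is stated in the paper without an explicit proof, as an immediate consequence of the main classification theorem together with the earlier lemmas on $\overline{J}^{(b,n)}$, $\tau^{(b,n)}$, $\tau^{\infty}$, and Lemma~\ref{lem:nonprimitive}. You have simply filled in the details --- the primeness of $0$ via separability, the hull computation for finite intersections of the maximal ideals $\overline{J}^{(b,n)}$, and the identification of the GNS factors --- in the natural way, and nothing you do departs from the paper's line of argument.
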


\begin{exam}(Tent map) Let $\gamma=(\gamma_1,\gamma_2)$  be a
  tent map on $[0,1]$.   Then the closed subset of $[0,1]$ corresponding to primitive
  ideals of $\F^{(\infty)}$ are as follows:
\begin{enumerate}
  \item $[0,1]$.
  \item $\{\,(2k-1)/2^n\,|\,k=1,\dots,2^{n-1}\,\}$, \,  ($n=1$, $2$, 
$\dots$)
\end{enumerate}
  \end{exam}

\begin{exam}(Sierpinski gasket) Let
  $\gamma=(\gamma_1,\gamma_2,\gamma_3)$  be a self-similar map
  on the Sierpinski gasket $K$.
Then the closed subsets of $K$ corresponding to primitive
  ideals of $\F^{(\infty)}$ are as follows:
\begin{enumerate}
  \item $K$.
  \item $\{\, (\gamma_{j_1}\circ \cdots \circ
        \gamma_{j_n})(T)\,|\, (j_1,\dots,j_n)\in \Sigma^n
        \}$, \, ($T=S$, $T$, $U$, and $n=0$,$1$, \dots )
\end{enumerate}
\end{exam}

\end{document}